\documentclass[11pt]{article}

\usepackage{amsmath,amsfonts,amssymb}
\usepackage{bbm}

\usepackage{lipsum}
\usepackage{kotex}
\usepackage{cancel}

\usepackage{xcolor}
\usepackage{ulem}

\newtheorem{defn}{Definition}[section]
\newtheorem{theorem}[defn]{Theorem}
\newtheorem{lemma}[defn]{Lemma}
\newtheorem{proposition}[defn]{Proposition}
\newtheorem{cor}[defn]{Corollary}
\newtheorem{remark}[defn]{Remark}
\newtheorem{example}[defn]{Example}

\newenvironment{proof}{{\bf Proof }}{{\vskip 0.1cm \hfill$\Box$}}

\def\N {{\mathbb N}}

\def\R {{\mathbb R}}

\begin{document}

\noindent
{{\Large\bf Weighted Helmholtz--Hodge decompositions, Lyapunov functions, and invariant measures}}\\[10pt]
\bigskip
\noindent
{\bf Haesung Lee}, {\bf Gerald Trutnau}\\
\noindent
{\bf Abstract.} 
We study weighted Helmholtz--Hodge decompositions of drift vector fields associated with second-order diffusion operators on $\R^d$, $d\ge 2$. Given a decomposition of the form
\[
  \mathbf{G}=A\nabla\Phi+\mathbf{B},
\]
we relate the weighted divergence-free condition $\mathrm{div}_{\mu}(\mathbf{B})=0$, where $\mu=e^{2\Phi}dx$, to infinitesimal invariance of $\mu$ for the operator
\[
  \frac12 \mathrm{trace}(A\nabla^2)+\langle \mathbf{G},\nabla\cdot\rangle.
\]
We compare weighted, orthogonal, and strictly orthogonal Helmholtz--Hodge decompositions and show that uniqueness of the infinitesimally invariant measure yields uniqueness of the corresponding weighted decomposition, and hence a canonical potential. For linear vector fields, we characterize Gaussian infinitesimally invariant measures by an algebraic Riccati equation together with a trace condition. In the Ornstein--Uhlenbeck case, this gives a structural proof of the classical criterion that a finite invariant measure exists if and only if the drift matrix is Hurwitz, and it identifies the associated strictly orthogonal decomposition. Finally, we treat nonlinear polynomial perturbations that preserve a given potential and obtain explicit classes of drifts for which the invariant measure and the weighted decomposition remain unique. The results clarify the relation between Lyapunov-type potentials, non-reversible perturbations, and invariant measures for diffusion semigroups.\\ \\
\textbf{MSC 2020.} Primary 60J35; Secondary 47D07, 60J60, 35Q84, 15A24.\\[3pt]
\textbf{Keywords.} Weighted Helmholtz--Hodge decomposition; infinitesimally invariant measure; invariant measure; diffusion semigroup; non-reversible diffusion; Lyapunov function; Ornstein--Uhlenbeck operator; algebraic Riccati equation; polynomial drift; stationary Fokker--Planck equation.

\section{Introduction} 
The construction of Lyapunov functions and the identification of invariant measures are two
central problems in the analysis of dissipative dynamical systems and diffusion semigroups.
Although both topics have been studied extensively, they are often treated by rather different
methods. On the one hand, Lyapunov functions are usually introduced in order to detect stability,
asymptotic behavior, or domains of attraction. On the other hand, invariant measures are typically
studied from the viewpoint of Markov semigroups, partial differential equations, or stochastic
processes. The purpose of this paper is to show that, for a broad class of diffusion-type dynamics, Lyapunov-type potentials and invariant measures can be studied within a common framework based on weighted Helmholtz--Hodge decompositions.\\[3pt]
Throughout let $d\ge 2$. We first consider the second-order partial differential operator
\begin{eqnarray}\label{thegeneratora}
\frac12 \Delta  +\langle \mathbf{G},\nabla  \cdot\rangle
\end{eqnarray}
where $\mathbf{G} \in L^p_{loc}(\mathbb{R}^d, \mathbb{R}^d)$, for some $p \in (d, \infty)$ and $\langle \cdot,  \cdot\rangle$ denotes the Euclidean inner product. We are interested in decompositions of the vector field $ \mathbf{G}$ into an explicit gradient part and a complementary part. It follows from \cite[Theorem 1(i)]{BRS} (see also \cite{LT19} for some extensions) that $\mathbf{G}$ admits a weighted Helmholtz--Hodge decomposition (WHHD, Definition \ref{helmdefn}(iii)), namely there exists $\Phi \in H^{1,p}_{loc}(\mathbb{R}^d)$, such that with $\mu:= \exp(2\Phi)dx$
\begin{eqnarray}\label{gradientVFdecomp}
\mathbf{G}=\nabla \Phi+ (\mathbf{G}-\nabla \Phi), \quad \text{and} \quad \mathrm{div}_{\mu}(\mathbf{G}-\nabla \Phi)=0.
\end{eqnarray}
In general, however, $\rho =\exp(2\Phi) \in H^{1,p}_{loc}(\mathbb{R}^d)$ only exists as an abstract limit, i.e. as a limit along an unknown subsequence, of solutions to local variational equations (see \cite[Theorem 2.27]{LST22}). Therefore, the decomposition in \eqref{gradientVFdecomp} is, at this level of generality, not explicit. The main goal of this work is to investigate situations in which such decompositions are explicit, in the sense that the potential function $\Phi$ is known as a concrete function, and to exploit the resulting structural consequences. This has consequences for invariant measures and ergodicity, Lyapunov stability theory for ODEs and dynamical systems, and the geometric interpretation of drift decompositions into radial and tangential components.\\[3pt]
The weighted decomposition \eqref{gradientVFdecomp} is the most general decomposition considered in this paper. It leads to infinitesimal invariance of $\frac12 \Delta  +\langle \mathbf{G},\nabla  \cdot\rangle$ with respect to $\mu$ (Definition \ref{definvaran} and Theorem \ref{propstrohelm}(i)), and makes $\mu$ an explicit candidate for an invariant measure. If the divergence-free condition in \eqref{gradientVFdecomp} is taken with respect to Lebesgue measure, \eqref{gradientVFdecomp} reduces to the usual Helmholtz--Hodge decomposition, a fundamental tool in fluid dynamics, particularly in the study of incompressible Stokes and Navier--Stokes equations; see, for instance, \cite{S01,G11}. By contrast, the stochastic dynamics generated by \eqref{thegeneratora} naturally lead to infinitesimally invariant measures that are typically not Lebesgue measure. For example,  in the ergodic case the corresponding invariant measures are finite, whereas Lebesgue measure is infinite on $\mathbb R^d$. Thus \eqref{gradientVFdecomp} should in general be understood as a weighted Helmholtz--Hodge decomposition, rather than as the usual Helmholtz--Hodge decomposition. In the present case $A=id$, the case $\mathbf{G}=\nabla\Phi$ corresponds to reversible dynamics, while the addition of a non-zero weighted divergence-free part typically produces a non-reversible perturbation with the same infinitesimally invariant measure. Therefore, a broad family of gradient vector field decompositions as given by \eqref{gradientVFdecomp} describes a whole landscape of non-reversible perturbations of a given reversible system.\\[3pt]
In a next step, we consider the more specific orthogonal Helmholtz--Hodge decomposition (OHHD, Definition \ref{helmdefn}(ii))
\begin{eqnarray}\label{gradientVFdecompo}
\mathbf{G}=\nabla \Phi+ (\mathbf{G}-\nabla \Phi), \quad \mathrm{div}_{dx }(\mathbf{G}-\nabla \Phi)=0\quad \text{and} \quad \langle \nabla \Phi, (\mathbf{G}-\nabla \Phi) \rangle = 0,
\end{eqnarray}
and its pointwise version, the strictly orthogonal Helmholtz--Hodge decomposition (SOHHD), under stronger regularity assumptions (cf. end of Definition \ref{helmdefn}). Such decompositions are classical Helmholtz--Hodge decompositions with an additional orthogonality condition and automatically imply the weighted decomposition \eqref{gradientVFdecomp} above (see Theorem \ref{propstrohelm}). Geometrically, the decomposition \eqref{gradientVFdecompo} splits the drift into a part normal to the level sets of the potential and a divergence-free part tangent to these level sets. In this situation the potential is not merely an auxiliary scalar function, but a natural object associated with the dynamics. This explains the relevance of OHHDs and SOHHDs in the study of Lyapunov functions and domains of attraction. SOHHDs have recently been studied in \cite{Sud1, Sud2, Liu22}, mainly for linear and planar systems, with some outlook toward nonlinear systems, and they also occur in related problems from statistical mechanics and metastability. At the same time, orthogonal decompositions are restrictive: simple examples show that stability and even uniqueness of the weighted decomposition need not imply existence of an OHHD; see Example \ref{counterexamplestrong} below.\\[3pt]
The same geometric picture has a stochastic counterpart when the deterministic system is replaced by a diffusion with generator \eqref{thegeneratora}, or more generally by a diffusion operator with second-order part $\frac12\mathrm{trace}(A\nabla^2)$, where $A$ is symmetric and positive definite. In particular, this connects our work to perturbations that arise naturally in the study of accelerated convergence to equilibrium, metastability, and non-reversible sampling algorithms (see, for instance, \cite{MCF15} and the more specific references cited below).  In that context one considers perturbations of a reversible diffusion by additional drifts that are weighted divergence-free with respect to the equilibrium density, so that the invariant measure is kept fixed while the long-time behavior may improve. We mention the papers \cite{LPM19,LI22}, where SOHHDs occur in the study of metastability of non-reversible diffusions and \cite{HHS05} where WHHDs are studied in the context of accelerating diffusions. The present work fits naturally into this circle of ideas, but with a different emphasis. Rather than starting from convergence-rate comparison alone, we focus on the decomposition itself and on the structural relation between the potential, the drift perturbation, and the invariant measure. In particular, our framework provides explicit classes of perturbations for which the invariant measure can be written down directly, and it clarifies when such perturbations come from orthogonal, strictly orthogonal, or merely weighted Helmholtz--Hodge structures.\\[3pt]
A further theme is uniqueness (see Theorem \ref{nonexofOHHD} and Remark \ref{rem:SOHHD_exuniq} for the main general results in this direction). From the viewpoint of Lyapunov stability alone, uniqueness of a Lyapunov function is not essential. By contrast, if the infinitesimally invariant measure is unique, then the potential in a weighted Helmholtz--Hodge decomposition becomes canonical, up to the natural additive normalization. We prove that uniqueness of the infinitesimally invariant measure implies uniqueness of the weighted decomposition, and we identify when the unique weighted decomposition is also orthogonal. \\
In Sections \ref{sec: WHHD and linear} and \ref{sec: WHHD and nonlinear}, we study transformations of the drift which preserve the invariant-measure structure and the Lyapunov potential; see, for instance, Theorem \ref{thm:polydifdrift}, Proposition \ref{prop:lyap-criterion}, Lemma \ref{conservlema}, and Theorem \ref{mainthm}.\\[3pt]
A second main part of the paper concerns linear vector fields and Ornstein--Uhlenbeck type operators. For
\[
L^{A,G}f=\frac12 \operatorname{trace}(A\nabla^2 f)+\langle Gx,\nabla f\rangle,
\]
as defined in \eqref{LAG} below, we characterize Gaussian infinitesimally invariant measures $\mu=e^{\langle x,Sx\rangle}dx$ (cf. Proposition \ref{propintegrep}(i)) by the algebraic Riccati equation
\[
SG+G^T S=2SAS
\]
together with the trace condition $\operatorname{trace}(G-AS)=0$. In the Hurwitz case this yields the unique finite invariant measure of the Ornstein--Uhlenbeck semigroup and the corresponding strictly orthogonal decomposition. Although the invariant measure criterion for Ornstein--Uhlenbeck semigroups is classical (\cite{LB07,LMP20,DZ92}), the proof given here (based on tools developed in \cite{Sud1,Sud2}, and a method that was used in \cite[Section 4.]{Liu22} for $A=id$ to solve the algebraic Riccati equation \eqref{SOHHDlinalg1}) exposes a direct structural link between Lyapunov-type potentials, Gaussian invariant measures, Helmholtz--Hodge decompositions, and algebraic Riccati equations. \\[3pt]
After the linear theory, we study nonlinear, in particular polynomial, perturbations. The guiding question is when a potential obtained from the linear problem remains valid after adding nonlinear drift terms. We obtain verifiable algebraic and growth conditions under which the weighted decomposition, the Lyapunov function, and the invariant measure remain explicit; in favorable cases the decomposition is strictly orthogonal, while in others it is only weighted but still unique.\\[3pt]
The main contributions can be summarized as follows.
\begin{enumerate}
\item We formulate weighted, orthogonal, and strictly orthogonal Helmholtz--Hodge decompositions for diffusion drifts and relate the weighted divergence-free condition to infinitesimal invariance.
\item We prove uniqueness of the weighted decomposition under uniqueness of the infinitesimally invariant measure, and we derive consequences for orthogonal decompositions.
\item For linear drifts we characterize Gaussian infinitesimally invariant measures through an algebraic Riccati equation with a trace condition, recovering the Hurwitz criterion for finite Ornstein--Uhlenbeck invariant measures from this structural viewpoint.
\item We construct nonlinear polynomial perturbations preserving explicit potentials and invariant measures, and distinguish when the resulting decompositions are strictly orthogonal or only weighted.
\end{enumerate}
The paper is organized as follows. In Section \ref{sec: WHHD and linear} we introduce weighted, orthogonal, and strictly orthogonal Helmholtz--Hodge decompositions and establish their relation to infinitesimally invariant measures. We then analyze linear vector fields and derive the Riccati characterization of Gaussian invariant measures, including the Ornstein--Uhlenbeck case. In Section \ref{sec: WHHD and nonlinear} we study nonlinear polynomial perturbations and obtain criteria for explicit invariant measures and uniqueness of the associated decompositions. Section \ref{section:algRiceq} develops the framework of algebraic Riccati equations needed in this work. The appendix is devoted to concrete examples and to additional structural properties illustrating both the scope and the limitations of the proposed framework.

\section{Weighted Helmholtz--Hodge decompositions and linear vector fields}\label{sec: WHHD and linear}
\subsection{Notation and operator}\label{sec: Not and Op}
Let $A=(a_{ij})_{1\le i,j\le d}$ be a symmetric positive definite matrix of real numbers and $\mathbf{G}\in L^2_{loc}(\R^d, \R^d)$ be a given vector field. Throughout this paper, we will consider
\begin{eqnarray}\label{thegenerator}
L^{A,\mathbf{G}}f:=\frac12 \mathrm{trace}(A\nabla^2 f) +\langle \mathbf{G},\nabla  f\rangle,
\end{eqnarray}
where $f$ has suitable regularity. If $A=id$, we simply write $L^{\mathbf{G}}$ instead of $L^{id,\mathbf{G}}$, hence $L^{\mathbf{G}}f:=\frac12 \Delta f +\langle \mathbf{G},\nabla  f\rangle$. Here we consider $\mathbf{G}$ as a vector field. Thus for a $d\times d$ matrix $G$ of real numbers, the notation $L^{A,G}$ will make perfectly sense as $x\mapsto Gx$ describes a vector field. However, in the sequel, we often write $Gx$ instead of $G$, for instance by writing $L^{A,Gx}$ or the like,  to emphasize the special character of matrix vector fields.  \\
For a matrix $B=(b_{ij})_{1\le i,j\le d}$ of weakly partially differentiable functions, we define (cf. \cite{LT25})
$$
\mathrm{div} B:=\nabla B^T:=\Big (\sum_{j=1}^{d}\partial_j b^T_{ij}\Big )_{1\le i\le  d}
\ , 
$$
where $B^T=(b^T_{ij})_{1 \leq i,j \leq d}=(b_{ji})_{1 \leq i,j \leq d}$ denotes the transpose of $B$. For $\rho  \in H^{1,2}_{loc}(\mathbb{R}^d) \cap L^{\infty}_{loc}(\mathbb{R}^d)$ with $\rho>0$ a.e., we write
\begin{equation*}\label{logder 2}
\beta^{\rho, B^T} := \frac12\mathrm{div} B + \frac{1}{2\rho} B^T \nabla \rho.
\end{equation*}
\begin{defn} \label{definvaran}
A positive, locally finite measure $\widehat{\mu}$ defined on $(\R^d, \mathcal{B}(\R^d))$ satisfying $L^{A,\mathbf{G}}f \in L^1(\R^d, \widehat{\mu})$ for all $f \in C_0^{\infty}(\R^d)$, where $L^{A,\mathbf{G}}$ is as in \eqref{thegenerator},
is said to be an infinitesimally invariant measure for $(L^{A,\mathbf{G}},C_0^{\infty}(\R^d))$, if
$$
\int_{\R^d} L^{A,\mathbf{G}}f d\widehat{\mu} = 0, \qquad \forall f \in C_0^{\infty}(\R^d).
$$
\end{defn}
\begin{remark}\label{definvaranrem}
If $\widehat{\mu}$ is an infinitesimally invariant measure for $(L^{A,\mathbf{G}}, C_0^{\infty}(\R^d))$, where $L^{A,\mathbf{G}}$ is as in \eqref{thegenerator} with $\mathbf{G}\in L^p_{loc}(\R^d, \R^d)$ for some $p\in(d,\infty)$, then by \cite[Corollary 2.10, 2.11]{BKR2}, there exists $\widehat{\rho} \in H^{1,p}_{loc}(\R^d) \cap C(\R^d)$ such that $\widehat{\rho}(x)>0$ for all $x \in \R^d$ and $\widehat{\mu}= \widehat{\rho} dx$. 
\end{remark}

\subsection{Weighted Helmholtz--Hodge decompositions}
\begin{defn} \label{helmdefn}
Let $\widetilde{\mathbf{G}} \in L^2_{loc}(\mathbb{R}^d, \mathbb{R}^d)$.

\begin{itemize}
\item[(i)]
We say that $\widetilde{\mathbf{G}} $ admits a Helmholtz--Hodge decomposition (in short: {\bf HHD}), if there exist $\Phi \in H^{1,2}_{loc}(\mathbb{R}^d) \cap L^{\infty}_{loc}(\mathbb{R}^d)$ and $\mathbf{B} \in L^2_{loc}(\mathbb{R}^d, \mathbb{R}^d)$, such that 
\begin{equation} \label{divfreecon}
\int_{\mathbb{R}^d} \langle \mathbf{B}, \nabla f \rangle \,dx = 0 \quad \forall f \in C_0^{\infty}(\mathbb{R}^d) \quad \text{(in short: $\mathrm{div}_{dx }(\mathbf{B})=0$)}
\end{equation}
 and
\begin{equation} \label{mathbfbolb}
\widetilde{\mathbf{G}} = \nabla \Phi + \mathbf{B}\quad (\text{a.e. on } \  \mathbb{R}^d).
\end{equation}
\item[(ii)]
We say that $\widetilde{\mathbf{G}} $ admits an orthogonal Helmholtz--Hodge decomposition (in short: {\bf OHHD}), 
if $\widetilde{\mathbf{G}} $ admits a HHD as in (i), and in addition,
\begin{equation} \label{BorthnablaPhi}
\langle \nabla \Phi, \mathbf{B} \rangle = 0 \quad (\text{a.e. on } \  \mathbb{R}^d).
\end{equation}
\item[(iii)]
We say that $\widetilde{\mathbf{G}} $ admits a weighted Helmholtz--Hodge decomposition (in short: {\bf WHHD}), if   there exist $\Phi \in H^{1,2}_{loc}(\mathbb{R}^d) \cap L^{\infty}_{loc}(\mathbb{R}^d)$ and $\mathbf{B} \in L^2_{loc}(\mathbb{R}^d, \mathbb{R}^d)$ such that
\eqref{mathbfbolb} holds and
$$
\int_{\mathbb{R}^d} \langle \mathbf{B}, \nabla f \rangle \,d\mu = 0 \quad \forall f \in C_0^{\infty}(\mathbb{R}^d) \quad \text{(in short: $\mathrm{div}_{\mu }(\mathbf{B})=0$)},
$$
where $\mu=\exp(2\Phi)\,dx$.
\end{itemize}
If $\widetilde{\mathbf{G}} $ is a continuous vector field admitting an OHHD, and  $\Phi$ as well as  $\mathbf{B}$ are continuously differentiable, then we say that $\widetilde{\mathbf{G}} $ admits a strictly orthogonal Helmholtz--Hodge decomposition (in short: {\bf SOHHD}). Note that in this case, \eqref{mathbfbolb}, \eqref{BorthnablaPhi}, and $\mathrm{div}\mathbf{B}=0$ hold everywhere on $\R^d$.
\end{defn}
\begin{remark}\label{rem: 2.4}
The regularity of $\Phi$ in Definition \ref{helmdefn} will be sufficient for the purposes of this paper. It can be relaxed to 
$$
\Phi\in H^{1,1}_{loc}(\mathbb{R}^d), \qquad \text{such that }\quad \exp(2\Phi)(1+\|\nabla \Phi\|^2)\in L^1_{loc}(\mathbb{R}^d).
$$ 
Then $\varphi:=\exp(\Phi)\in H^{1,2}_{loc}(\mathbb{R}^d)$ and $\mu =\varphi^2 dx$ has full support, so the framework of \cite[Section 2.1.1]{LST22} can be applied.
The only slight modification would be the assumption $\mathbf{B} \in L^2_{loc}(\mathbb{R}^d, \mathbb{R}^d,\mu)$, since then possibly $L^2_{loc}(\mathbb{R}^d, \mathbb{R}^d)\not =L^2_{loc}(\mathbb{R}^d, \mathbb{R}^d,\mu)$.
\end{remark}
\begin{theorem} \label{propstrohelm}
Consider
$$
L^{A,\mathbf{G}}f:=\frac12 \mathrm{trace}(A\nabla^2 f) +\langle \mathbf{G},\nabla  f\rangle, \quad f \in C_0^{\infty}(\mathbb{R}^d),
$$
as in \eqref{thegenerator}, and suppose that
\begin{eqnarray}\label{graddec}
\mathbf{G} =A\nabla \Phi + \mathbf{B}, \quad \text{with}\ \Phi \in H^{1,2}_{loc}(\mathbb{R}^d) \cap L^{\infty}_{loc}(\mathbb{R}^d), \ \mathbf{B} \in L^2_{loc}(\mathbb{R}^d, \mathbb{R}^d).
\end{eqnarray}
Let $\mu=\rho\,dx$, where $\rho=\exp(2 \Phi)$. Then, it holds:
\begin{itemize}
\item[(i)]
$\mu$ is an infinitesimally invariant measure for $(L^{A,\mathbf{G}}, C_0^{\infty}(\mathbb{R}^d))$, if and only if $
{\mathrm{div}}_{\mu}(\mathbf{B})=0$.
\item[(ii)] 
\begin{equation} \label{equidivfre}
\mathrm{div}_{\mu}(\mathbf{B})=0\quad \Longleftrightarrow \quad \int_{\mathbb{R}^d}\big (\langle \nabla \Phi, \mathbf{B} \rangle  \varphi -\frac 12\big \langle  \mathbf{B}, \nabla \varphi \big \rangle\big ) \,dx = 0 \quad \forall \varphi \in C_0^{\infty}(\mathbb{R}^d).
\end{equation}
In particular, if $\mathbf{B}$ is weakly differentiable, then using integration by parts in \eqref{equidivfre}, $\mathrm{div}_{\mu}(\mathbf{B})=0$ is equivalent to 
$$
\langle \nabla \Phi, \mathbf{B} \rangle+\frac12\mathrm{div}\mathbf{B}=0.
$$
In general, in the case where $\mathbf{B}$ is not necessarily weakly differentiable, \eqref{equidivfre} implies 
\begin{equation} \label{equidivfre2}
\mathrm{div}_{\mu}(\mathbf{B})=0\quad\text{and} \quad  \mathrm{div}_{dx}(\mathbf{B})=0\quad \Longrightarrow \quad \langle \nabla \Phi, \mathbf{B} \rangle = 0 
\end{equation}
and 
\begin{equation} \label{equidivfre3}
\mathrm{div}_{dx }(\mathbf{B})=0\quad \text{and} \quad \langle \nabla \Phi, \mathbf{B} \rangle = 0 
\quad \Longrightarrow \quad \mathrm{div}_{\mu}(\mathbf{B})=0,
\end{equation}
and
\begin{equation} \label{equidivfreadditi}
\mathrm{div}_{\mu}(\mathbf{B})=0\quad\text{and} \quad  \langle \nabla \Phi, \mathbf{B} \rangle = 0 
\quad \Longrightarrow \quad \mathrm{div}_{dx}(\mathbf{B})=0. 
\end{equation}\\
Moreover, it follows from \eqref{equidivfre3} that if the decomposition of  
\begin{eqnarray}\label{propstrohelm:eqOHHD}
\widetilde{\mathbf{G}} = \nabla \Phi + \mathbf{B},
\end{eqnarray}
(where $\mathbf{B}=\mathbf{G} -A\nabla \Phi $ according to \eqref{graddec}) is an OHHD as in Definition \ref{helmdefn}, then it is a WHHD. Conversely, it follows from \eqref{equidivfre} that if the decomposition in \eqref{propstrohelm:eqOHHD} is a WHHD, then it is only an OHHD if either $\mathrm{div}_{dx}(\mathbf{B})=0$ or $\langle \nabla \Phi, \mathbf{B} \rangle = 0$, in which case both are zero, cf. \eqref{equidivfre2} and \eqref{equidivfreadditi}.
\end{itemize}
\end{theorem}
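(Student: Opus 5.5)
The plan is to reduce everything to the integration by parts identity obtained by testing against $\rho$. Fix $f\in C_0^\infty(\R^d)$. Since $\rho=e^{2\Phi}$ with $\Phi\in H^{1,2}_{loc}\cap L^\infty_{loc}$, the chain rule gives $\rho\in H^{1,2}_{loc}\cap L^\infty_{loc}$ with $\nabla\rho=2\rho\nabla\Phi$, and $\rho$ is locally bounded above and away from zero. Writing $\mathrm{trace}(A\nabla^2 f)=\mathrm{div}(A\nabla f)$ and integrating by parts against $\rho$ yields
\[
\int \tfrac12\mathrm{trace}(A\nabla^2 f)\,\rho\,dx=-\tfrac12\int\langle A\nabla f,\nabla\rho\rangle dx=-\int\langle A\nabla\Phi,\nabla f\rangle\,d\mu ,
\]
where symmetry of $A$ is used. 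Adding $\int\langle \mathbf G,\nabla f\rangle d\mu$ and substituting $\mathbf G=A\nabla\Phi+\mathbf B$, the gradient terms cancel, so $\int L^{A,\mathbf G}f\,d\mu=\int\langle \mathbf B,\nabla f\rangle d\mu$. Integrability of $L^{A,\mathbf G}f$ in $L^1(\mu)$ holds because $\mathbf G\in L^2_{loc}\subset L^1_{loc}$ and $\rho$ is locally bounded. This proves (i).

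For (ii), given $\varphi\in C_0^\infty$, I would take $f=\varphi e^{-2\Phi}$. This is not smooth, but it lies in $H^{1,2}_{0}\cap L^\infty$ with compact support and satisfies $\rho\nabla f=\nabla\varphi-2\varphi\nabla\Phi$. Hence $\int\langle\mathbf B,\nabla f\rangle d\mu=-2\int(\langle\nabla\Phi,\mathbf B\rangle\varphi-\tfrac12\langle\mathbf B,\nabla\varphi\rangle)dx$. Conversely, $f\mapsto\varphi=f\rho$ is the inverse map. The \textbf{main technical step} is justifying that $\mathrm{div}_\mu(\mathbf B)=0$ extends from $C_0^\infty$ to compactly supported $H^{1,2}\cap L^\infty$ test functions, and symmetrically for the $dx$-condition. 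I would do this by mollification: $f_n=f*\eta_n\to f$ in $H^{1,2}$ with common compact support, and $\mathbf B\rho\in L^2_{loc}$, so $\int\langle \mathbf B,\nabla f_n\rangle\rho\,dx$ converges. Note that $\langle\nabla\Phi,\mathbf B\rangle\in L^1_{loc}$ since both factors are in $L^2_{loc}$. This gives \eqref{equidivfre}.

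If $\mathbf B$ is weakly differentiable, integrating $-\tfrac12\int\langle\mathbf B,\nabla\varphi\rangle dx=\tfrac12\int\varphi\,\mathrm{div}\mathbf B\,dx$ by parts shows that \eqref{equidivfre} becomes $\int(\langle\nabla\Phi,\mathbf B\rangle+\tfrac12\mathrm{div}\mathbf B)\varphi\,dx=0$ for all $\varphi$. This is the pointwise a.e. identity by the fundamental lemma.

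For the three implications, observe that $\mathrm{div}_{dx}(\mathbf B)=0$ means $\int\langle\mathbf B,\nabla\varphi\rangle dx=0$ for all $\varphi$. Under this hypothesis, \eqref{equidivfre} reduces to $\int\langle\nabla\Phi,\mathbf B\rangle\varphi\,dx=0$ for all $\varphi$, i.e. $\langle\nabla\Phi,\mathbf B\rangle=0$ a.e. This yields both \eqref{equidivfre2} and, read backwards, \eqref{equidivfre3}. Likewise, if $\langle\nabla\Phi,\mathbf B\rangle=0$ a.e., then \eqref{equidivfre} says exactly $\mathrm{div}_{dx}(\mathbf B)=0$, giving \eqref{equidivfreadditi}.

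The final assertions are read off directly. An OHHD $\nabla\Phi+\mathbf B$ satisfies both hypotheses of \eqref{equidivfre3}, hence $\mathrm{div}_\mu\mathbf B=0$, so it is a WHHD. Conversely, for a WHHD, \eqref{equidivfre2} and \eqref{equidivfreadditi} show that either of the conditions $\mathrm{div}_{dx}\mathbf B=0$ or $\langle\nabla\Phi,\mathbf B\rangle=0$ forces the other, and then the decomposition is an OHHD.
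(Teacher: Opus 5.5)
Your proposal is correct and takes essentially the paper's route. Part (i) rests on the $\mu$-symmetry of the gradient part, which you verify explicitly by integrating $\tfrac12\mathrm{div}(A\nabla f)$ against $\rho$; part (ii) rests on the substitution $f\leftrightarrow \rho f$ combined with density of uniformly bounded smooth functions in compactly supported $H^{1,2}\cap L^\infty$. Your explicit derivation of the three implications and of the OHHD/WHHD assertions from \eqref{equidivfre}, via the fundamental lemma and $\langle\nabla\Phi,\mathbf B\rangle\in L^1_{loc}$, simply fills in what the paper leaves to the reader.
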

\begin{proof} 
(i)
Note that
$$
L^{A,\mathbf{G}}f=\frac12 \mathrm{trace}(A\nabla^2 f) + \left\langle A\nabla \Phi, \nabla f \right\rangle + \langle \mathbf{B}, \nabla f \rangle, \quad f \in C_0^{\infty}(\mathbb{R}^d).
$$
Since $L^{A,A\nabla \Phi}=\frac12 \mathrm{trace}(A\nabla^2 f)  + \left\langle A \nabla \Phi, \nabla \cdot\right\rangle$ is symmetric with respect to $\mu$, it follows that $\mu$ is infinitesimally invariant for $(L^{A,A\nabla \Phi}, C_0^{\infty}(\mathbb{R}^d))$, and therefore $\mathrm{div}_{\mu}(\mathbf{B})=0$ is equivalent to $\mu$ being infinitesimally invariant for $(L^{A,\mathbf{G}}, C_0^{\infty}(\mathbb{R}^d))$.\\
(ii) Observe that \eqref{equidivfre} holds, since for all $\tilde{\varphi} \in H^{1,2}(\mathbb{R}^d)_0 \cap L^{\infty}(\mathbb{R}^d)$,
\begin{align*}
\int_{\mathbb{R}^d} \langle \mathbf{B}, \nabla \tilde{\varphi} \rangle \, d\mu 
&= \int_{\mathbb{R}^d} \langle \mathbf{B}, \nabla (\rho \tilde{\varphi}) \rangle \, dx  - \int_{\mathbb{R}^d} \langle \mathbf{B}, \nabla \rho \rangle \tilde{\varphi} \, dx  \\
&=	\int_{\mathbb{R}^d} \langle \mathbf{B}, \nabla (\rho \tilde{\varphi}) \rangle \, dx  - \int_{\mathbb{R}^d} \langle \mathbf{B}, 2\nabla \Phi \rangle \rho \tilde{\varphi} \, dx.
\end{align*}
Since every function in $H^{1,2}(\mathbb R^d)_0\cap L^\infty(\mathbb R^d)$ can be approximated in $H^{1,2}(\mathbb R^d)$ by a uniformly bounded sequence in $C_0^\infty(\mathbb R^d)$, the assertion follows by density.
\end{proof}
\begin{remark}\label{connectionODE OHHD}(Interpretation of OHHDs and SOHHDs.)
Let $\mathbf{G}$ be smooth and consider the deterministic system
\[
  \frac{dX_t}{dt}=\mathbf{G}(X_t).
\]
If $-\Phi$ is a Lyapunov function for this system, then
\[
  \langle \nabla(-\Phi(x)),\mathbf{G}(x)\rangle \leq 0
\]
on the region where the Lyapunov condition is imposed. This condition only says that the drift does not increase $-\Phi$ along trajectories; it does not determine how the non-gradient part of the drift is oriented relative to the level sets of $\Phi$. If, however,
\[
  G=\nabla\Phi+\mathbf{B}, \qquad \langle \nabla\Phi,\mathbf{B}\rangle=0,
\]
then $\mathbf{B}$ is tangent to the level sets of $\Phi$, and hence also to the level sets of $-\Phi$. Consequently
\[
\langle \nabla (-\Phi(x)), \mathbf{G}(x) \rangle=-\langle \nabla \Phi(x), \nabla \Phi(x) \rangle\le 0.
\]
Thus an OHHD gives a particularly transparent Lyapunov structure: the gradient part changes the potential, while the orthogonal part moves tangentially along its level sets. If, in addition, the decomposition is strict, then these identities hold pointwise and the condition $\mathrm{div}(\mathbf{B})=0$ gives the usual incompressible Helmholtz--Hodge interpretation. Under the present $A=id$ assumption, Theorem \ref{propstrohelm} shows that the two conditions
\[
\mathrm{div}_{dx}(\mathbf{B})=0, \qquad \langle \nabla\Phi,\mathbf{B}\rangle=0
\]
also imply $\mathrm{div}_{\mu}(\mathbf{B})=0$, where $\mu=e^{2\Phi}dx$. This is the link between the deterministic Lyapunov interpretation and infinitesimal invariance of $\mu$ for the associated diffusion operator. The divergence-free condition also prevents the decomposition from degenerating in a trivial way. Indeed, if one tried to take $\nabla\Phi=0$, then $\mathbf{B}=\mathbf{G}$, and the HHD condition would require $\mathrm{div}(\mathbf{G})=0$. Thus, except for divergence-free vector fields, the HHD/SOHHD conditions force a nontrivial gradient contribution.
\end{remark}

\begin{theorem}\label{nonexofOHHD}
Let $\widetilde{\mathbf{G}}\in L^p_{loc}(\mathbb{R}^d, \mathbb{R}^d)$ for some $p\in(d,\infty)$. Suppose that $(L^{\widetilde{\mathbf{G}}}, C_0^{\infty}(\mathbb{R}^d))$ admits a unique infinitesimally invariant measure $\mu$. (Here unique is meant up to a multiplicative constant.) Write $\mu=\exp(2 \Phi)\,dx$, with $\Phi \in H^{1,p}_{loc}(\R^d)$ (cf. Remark \ref{definvaranrem}), and set $\mathbf{B}:=\widetilde{\mathbf{G}}-\nabla \Phi$. Then it holds:
\begin{itemize}
\item[(i)] $\widetilde{\mathbf{G}}=\nabla \Phi+\mathbf{B}$ is the unique WHHD for $\widetilde{\mathbf{G}}$.
  \item[(ii)] If there exists an OHHD for $\widetilde{\mathbf{G}}$, then it is unique and 
 \[
 \widetilde{\mathbf{G}}=\nabla \Phi+\mathbf{B}
 \]
is the unique OHHD for $\widetilde{\mathbf{G}}$.
\item[(iii)] An OHHD for $\widetilde{\mathbf{G}}$ exists, if and only if $\mathrm{div}_{dx}(\mathbf{B})=0$ or $\langle \nabla \Phi, \mathbf{B} \rangle = 0$.\\
(In particular, if $\langle \nabla \Phi, \mathbf{B} \rangle \not =0$ on $V$ for some measurable set $V$ with $dx(V)>0$ or if $\mathrm{div}_{dx }(\mathbf{B})\not =0$, then there is no OHHD for $\widetilde{\mathbf{G}}$.)
\end{itemize}
\end{theorem}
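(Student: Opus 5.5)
The plan is to reduce everything to Theorem \ref{propstrohelm} with $A=id$, together with the uniqueness assumption on $\mu$. For (i), existence is immediate. By Remark \ref{definvaranrem}, $\mu=\widehat\rho\,dx$ with $\widehat\rho$ continuous, strictly positive and in $H^{1,p}_{loc}$, so $\Phi=\frac12\log\widehat\rho\in H^{1,p}_{loc}(\R^d)\cap L^\infty_{loc}(\R^d)$. Since $p>d\ge 2$, this gives $\Phi\in H^{1,2}_{loc}$, and $\mathbf{B}=\widetilde{\mathbf{G}}-\nabla\Phi\in L^2_{loc}$. Because $\mu$ is infinitesimally invariant, Theorem \ref{propstrohelm}(i) yields $\mathrm{div}_\mu(\mathbf{B})=0$, so the decomposition is a WHHD.

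For uniqueness in (i), let $\widetilde{\mathbf{G}}=\nabla\Psi+\mathbf{C}$ be any WHHD with $\Psi\in H^{1,2}_{loc}\cap L^\infty_{loc}$ and $\mathrm{div}_{\nu}(\mathbf{C})=0$, where $\nu=e^{2\Psi}dx$. Then $\nu$ is positive and locally finite because $\Psi$ is locally bounded. One also needs $L^{\widetilde{\mathbf G}}f\in L^1(\nu)$ for $f\in C_0^\infty$, which holds since $\widetilde{\mathbf G}\in L^p_{loc}\subset L^1_{loc}$ and $e^{2\Psi}$ is locally bounded. Theorem \ref{propstrohelm}(i) then shows that $\nu$ is infinitesimally invariant for $(L^{\widetilde{\mathbf{G}}},C_0^\infty(\R^d))$. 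By uniqueness, $\nu=c\mu$ for some $c>0$, so $e^{2\Psi}=c\,e^{2\Phi}$ a.e., that is, $\Psi=\Phi+\frac12\log c$. It follows that $\nabla\Psi=\nabla\Phi$ and $\mathbf{C}=\mathbf{B}$ a.e. The decomposition is therefore unique in the natural sense: $\nabla\Phi$ and $\mathbf{B}$ are unique, and $\Phi$ is determined up to an additive constant. I would state this normalization explicitly.

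For (ii), suppose $\widetilde{\mathbf{G}}=\nabla\Psi+\mathbf{C}$ is an OHHD. By \eqref{equidivfre3} in Theorem \ref{propstrohelm}(ii) (applied with $A=id$), it is a WHHD. By (i), it coincides with $\nabla\Phi+\mathbf{B}$ up to the additive constant in $\Phi$. Hence any OHHD equals this one, which gives both uniqueness and the identification.

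For (iii), if an OHHD exists, then by (ii) it is $\nabla\Phi+\mathbf{B}$, so both $\mathrm{div}_{dx}(\mathbf{B})=0$ and $\langle\nabla\Phi,\mathbf{B}\rangle=0$ hold; in particular the disjunction holds. Conversely, we know $\mathrm{div}_\mu(\mathbf{B})=0$ from (i). If $\mathrm{div}_{dx}(\mathbf{B})=0$, then \eqref{equidivfre2} gives $\langle\nabla\Phi,\mathbf{B}\rangle=0$. If instead $\langle\nabla\Phi,\mathbf{B}\rangle=0$, then \eqref{equidivfreadditi} gives $\mathrm{div}_{dx}(\mathbf{B})=0$. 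In either case $\nabla\Phi+\mathbf{B}$ is an OHHD, with the regularity already verified in (i). The parenthetical claim is the contrapositive.

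The only delicate step is in (i): checking that an arbitrary WHHD potential produces an admissible measure in the sense of Definition \ref{definvaran}, namely the integrability of $L^{\widetilde{\mathbf G}}f$ and local finiteness, and correctly interpreting "unique" as unique up to constants in $\Phi$. Everything else is direct bookkeeping with Theorem \ref{propstrohelm}.
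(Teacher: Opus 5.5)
Your proposal is correct and follows the paper's proof essentially step for step. Part (i) uses Theorem \ref{propstrohelm}(i) together with uniqueness of $\mu$, part (ii) reduces OHHDs to WHHDs via \eqref{equidivfre3}, and part (iii) uses \eqref{equidivfre2} and \eqref{equidivfreadditi}. The differences are minor. You read off $\nabla\Psi=\nabla\Phi$ directly from $\Psi=\Phi+\frac12\log c$ a.e., whereas the paper first invokes Remark \ref{definvaranrem} to place the competing potential in $H^{1,p}_{loc}$ and then uses the quotient formula for the gradient. Your explicit checks of the admissibility of $\nu$ and the regularity of $\Phi$ are details the paper leaves implicit.
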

\begin{proof}
(i) By assumption $\nabla \Phi+\mathbf{B}$ is a WHHD for $\widetilde{\mathbf{G}}$. Suppose $\nabla \widetilde{\Phi}+\widetilde{\mathbf{B}}$ is another one, where $\tilde \Phi \in H^{1,2}_{loc}(\mathbb{R}^d)\cap L^{\infty}_{loc}(\mathbb{R}^d)$. Then $\exp(2\tilde \Phi) \, dx$ is infinitesimally invariant for $(L^{\widetilde{\mathbf{G}}}, C_0^{\infty}(\mathbb{R}^d))$ by Theorem \ref{propstrohelm}(i), and by Remark \ref{definvaranrem}  we obtain that $\exp(2 \tilde\Phi) \in H^{1,p}_{loc}(\mathbb{R}^d)$, hence $\tilde \Phi \in H^{1,p}_{loc}(\R^d)$. By the uniqueness of the infinitesimally invariant measure for $(L^{\widetilde{\mathbf{G}}}, C_0^{\infty}(\mathbb{R}^d))$, we must have $e^{2\tilde \Phi}=c_0 e^{2 \Phi}$ for some $c_0>0$. Then 
$$
\nabla \tilde \Phi =\frac{\nabla(e^{2\tilde \Phi})}{2e^{2\tilde \Phi}}=\frac{\nabla(c_0e^{2 \Phi})}{2c_0e^{2 \Phi}}=\nabla \Phi,
$$
and therefore $\widetilde{\mathbf{G}}=\nabla \Phi + \mathbf{B}$ is the unique WHHD for $\widetilde{\mathbf{G}}$.\\
(ii) Every OHHD is in particular a WHHD by Theorem \ref{propstrohelm}. Therefore the uniqueness in (ii) follows from the uniqueness in (i).\\
(iii) Suppose there exists an OHHD for $\widetilde{\mathbf{G}}$. Then by (i), $\widetilde{\mathbf{G}}=\nabla \Phi + \mathbf{B}$ is the unique OHHD for $\widetilde{\mathbf{G}}$. Hence $\langle \nabla \Phi, \mathbf{B} \rangle =0$, as well as $\mathrm{div}_{dx}( \mathbf{B})=0$. Conversely, if $\mathrm{div}_{dx}(\mathbf{B})=0$ or $\langle \nabla \Phi, \mathbf{B} \rangle = 0$, then since by Theorem \ref{propstrohelm}(i), $\mathrm{div}_{\mu}(\mathbf{B})=0$, thus $\widetilde{\mathbf{G}}=\nabla \Phi+\mathbf{B}$ is a WHHD,  we obtain from the last paragraph of Theorem \ref{propstrohelm} that $\widetilde{\mathbf{G}}=\nabla \Phi + \mathbf{B}$ is an OHHD.\\
\end{proof}

\begin{remark}\label{rem:SOHHD_exuniq}(Uniqueness of WHHDs)
The infinitesimally invariant measure $\mu$ in Theorem \ref{nonexofOHHD} is for instance unique, if the semigroup $(T_t)_{t>0}$ associated to the closed extension of $(L^{\widetilde{\mathbf{G}}},C_0^{\infty}(\mathbb{R}^d))$, or its associated Markov process, both constructed in \cite{LST22}, is recurrent (cf.  \cite[Theorem 3.15]{LT22} and \cite[Theorem 3.11, and Sections 3.2.2, 3.2.3]{LST22}). 
A sufficient condition for uniqueness of the WHHD $\widetilde{\mathbf{G}}=\nabla \Phi+\mathbf{B}$ and of the OHHD, if it exists, can therefore be stated purely in terms of $\Phi$ and $\mathbf{B}$. According to \cite[Proposition 3.4.2]{LST22}, or equivalently \cite[Theorem 21]{GT2} for the original result, it is given as follows. For $r>0$, let $B_r$ be the Euclidean ball of radius $r$ centered at the origin, and set
\begin{equation*}
v_1(r):=\mu(B_r), \quad v_2(r):=\int_{B_r}   \big | \big \langle \mathbf{B}(x),x \big \rangle \big | \mu(dx), \quad v(r):=v_1(r)+v_2(r),
\end{equation*}
and
$$
a_n:=\int_1^n \frac{r}{v(r)}dr, \ \ n\ge 1.
$$
Then, uniqueness of the WHHD $\widetilde{\mathbf{G}}=\nabla \Phi+\mathbf{B}$ holds, if
$$
\lim_{n\rightarrow \infty}a_n=\infty \ \ \ \text{and} \ \ \  \lim_{n\rightarrow \infty} \frac{\ln(v_2(n)\vee 1)}{a_n}=0.
$$
The latter is according to \cite[Corollary 22]{GT2} satisfied, if for large $r$ either of the following two conditions holds:
\begin{itemize}
\item[(a)] $v_1(r) \leq b r^2$ and $v_2(r) \leq b\log r$ for some constant $b>0$,
\item[(b)] $v(r) \leq c r^\alpha$ for some constants $c>0$ and $ \alpha <2$.
\end{itemize}
\end{remark}
The following counterexample shows that the decomposition \eqref{gradientVFdecomp} exploited in \cite{LT19} might be a unique gradient decomposition but not an OHHD, even in cases where $\Phi$ is explicitly given and smooth.
\begin{example}\label{counterexamplestrong}(No OHHD exists, but a unique WHHD exists and stability holds)
Define for $x=(x_1,\ldots, x_d)\in \R^d$
\[
C(x) = (c_{ij}(x))_{1 \leq i,j \leq d}, \quad 
c_{ij}(x) = 
\begin{cases}
x_1 & \text{if } (i,j)=(1,d) \\
-x_1 & \text{if } (i,j)=(d,1), \\
0 & \text{otherwise}.
\end{cases}
\]
Let $\mathbf{G}(x) =- x+  C(x)x + (0, \ldots,0, \frac{1}{2})$, where $C(x)x= (x_1x_d,0,\ldots,0,-x_1^2)$. 
Then, $- x+  C(x)x + (0, \ldots,0, \frac{1}{2})$  is the unique WHHD of $\mathbf{G}$, but $\mathbf{G}$ does not admit an OHHD, hence also no SOHHD.                                                              
\end{example}
\begin{proof}
Let $\mu=\rho\,dx$, where $\rho(x) = e^{2\Phi(x)}$, and  $\Phi:=-\frac12\|x\|^2$. Then $\beta^{\rho, id}(x)=\nabla \Phi(x)=-x$, and 
\begin{align*}
\beta^{\rho, C^T}(x)&= \frac{1}{2\rho} C^T(x) \nabla \rho(x) + \frac{1}{2} \operatorname{div} C(x) = C(x)x + (0,  \ldots, 0,\frac{1}{2}) 
\end{align*}
Thus $\mathbf{G}(x)=\nabla \Phi(x)+\beta^{\rho, C^T}(x)$, and since $\mathrm{div}_{\mu}(\beta^{\rho, C^T})=0$, we have that $\mathbf{G}$ admits a WHHD. In particular, by Theorem \ref{propstrohelm}(i), $\mu$ is an infinitesimally invariant measure for $(L^{\mathbf{G}},C_0^{\infty}(\mathbb{R}^d))$. Since
\begin{eqnarray*}
\langle \mathbf{G}(x),x \rangle =  -(x_1^2+\ldots +x_d^2)+\frac{1}{2} x_d,
\end{eqnarray*}
we obtain by \cite[Corollary 3.27]{LST22} that $(T_t^{\mu})_{t>0}$ is conservative, and since additionally $\mu$ is finite, we get by \cite[Lemma 3.18(ii)]{LT22}, that $\mu$ is the unique infinitesimally invariant measure for $(L^{\mathbf{G}},C_0^{\infty}(\mathbb{R}^d))$ up to a multiplicative constant. (Note that additionally by \cite[Lemma 3.18]{LT22} recurrence, i.e. stability, holds, and that $\mu$ is up to normalization the unique invariant measure.)
Furthermore
\begin{align*}
\langle \nabla \Phi, \beta^{\rho, C^T} \rangle(x) =  \Big \langle -x, C(x)x + (0, \ldots,  0,\frac{1}{2})  \Big \rangle = -\frac{1}{2} x_d,
\end{align*}
and $\mathrm{div}(\beta^{\rho, C^T})(x)=\mathrm{div}\big ((x_1x_d,0,\ldots,0,-x_1^2)+(0, \ldots,  0,\frac{1}{2})\big )=x_d$. Therefore, all assertions now follow from Theorem  \ref{nonexofOHHD}.
\end{proof}
\begin{example}\label{counterexamplestricktunique}(Two SOHHDs exist and no stability holds)
A simple example of nonuniqueness of a SOHHD is given by a constant vector field $\mathbf{G}\equiv (c_1,\ldots,c_d)\in \R^d$. Then $\Phi_1\equiv 0$ and $\Phi_2(x)=\langle (c_1,\ldots,c_d), x \rangle$, $x\in \R^d$ are two potential functions, i.e. $\mathbf{G}=0+(c_1,\ldots,c_d)$ is an SOHHD of $\mathbf{G}$ corresponding to $\Phi_1\equiv 0$, and $\mathbf{G}=(c_1,\ldots,c_d)+0$ is an SOHHD of $\mathbf{G}$ corresponding to $\Phi_2(x)=\langle (c_1,\ldots,c_d), x \rangle$. Another example (cf. Example \ref{exam:nonunique}) is given as follows: let
\[
\mathbf{G}=G=\begin{pmatrix}a&b\\ b&-a\end{pmatrix}, \quad a,b\in \R.
\]
Thus $\mathbf{G}\big ((x,y)\big )= (ax+by, bx-ay)$ and $\mathrm{div}(\mathbf{G})=\mathrm{trace}(G)=0$. Then, one can easily see that $\mathbf{G}=0+\mathbf{G}$ is an SOHHD of $\mathbf{G}$ with potential function $\Phi_1\equiv 0$, and $\mathbf{G}=\mathbf{G}+0$ is an SOHHD of $\mathbf{G}$ with potential function $\Phi_2\big ((x,y)\big )=\frac12(ax^2 +2bxy -ay^2)$.

\end{example}
Examples for existence and uniqueness of SOHHDs will be considered in the next sections.
\subsection{Linear vector fields}
Let $G=(g_{ij})_{1\le i,j\le d}$ be a matrix of real numbers. In this section, we will study linear vector fields 
$$
\mathbf{G}(x):= Gx, \qquad x\in \R^d,
$$
i.e. we consider $L^{A,\mathbf{G}}=L^{A,G}=L^{A,Gx}$ as in \eqref{thegenerator}. 
Thus
\begin{eqnarray}\label{LAG}
L^{A,G}f&=&\frac12\mathrm{trace}(A\nabla^2f)+\langle Gx,\nabla f\rangle \nonumber\\
&=& \frac12\sum_{i,j=1}^d a_{ij}\partial_{ij}f+\sum_{i=1}^d\big (\sum_{j=1}^d g_{ij}x_j\big ) \partial_i f,\quad f \in C_0^{\infty}(\mathbb{R}^d).
\end{eqnarray}
Let  $S=(s_{ij})_{1\le i,j\le d}$ be an arbitrary symmetric matrix of real numbers
\begin{eqnarray*}\label{linear2}
\Phi(x):=\frac12 \langle x, Sx\rangle, \qquad x\in \R^d,\qquad \mu:= e^{2\Phi(x)}dx=e^{\langle x, Sx\rangle}dx.
\end{eqnarray*}
Then, noting that $\nabla \Phi (x)=Sx$, we have that
\begin{eqnarray*}\label{L0A}
L^{A, A\nabla \Phi}f&=&\frac12\mathrm{trace}(A\nabla^2f)+\langle ASx,\nabla f\rangle, \quad f \in C_0^{\infty}(\mathbb{R}^d),
\end{eqnarray*}
is a symmetric operator with respect to $\mu$, and by Theorem \ref{propstrohelm}(i), $\mu$ is an infinitesimally invariant measure for $(L^{A,G},C_0^{\infty}(\mathbb{R}^d))$, if and only if $\mathrm{div}_{\mu}(Hx )=0$, where $Hx:=(G-AS)x$, $x\in \R^d$, $H=(h_{ij})_{1\le i,j\le d}$.
Since $\mathbf{B}=Hx$ is continuously differentiable, we know from Theorem \ref{propstrohelm}(ii) that  $\mathrm{div}_{\mu}(Hx)=0$ is equivalent to
\begin{eqnarray}\label{SOHHDlin1}
\langle Sx,Hx\rangle+\frac12 \mathrm{div}(Hx)=0, \qquad x\in \R^d.
\end{eqnarray}
We can rewrite \eqref{SOHHDlin1} as
\begin{eqnarray}\label{SOHHDlin2}
&&\sum_{i=1}^{d}\big (S x\big )_i\big (H x\big )_i+\frac12 \sum_{i=1}^{d}\partial_i \big (\sum_{j=1}^d h_{ij}x_j\big )\nonumber \\
&=&\sum_{i=1}^{d}\big (\sum_{j=1}^{d}s_{ij} x_j\big )\big (\sum_{k=1}^{d}h_{ik}x_k\big )+\frac12 \sum_{i=1}^{d} h_{ii}
=0, \qquad x\in \R^d.
\end{eqnarray}
Choosing $x=0$, we obtain
\begin{eqnarray}\label{SOHHDlin3}
\sum_{i=1}^{d} h_{ii}=0.
\end{eqnarray}
On the basis of these preliminary considerations, we now deduce the following lemma.
\begin{lemma}\label{lemma algebraic riccati}
Let  $S=(s_{ij})_{1\le i,j\le d}$, $G=(g_{ij})_{1\le i,j\le d}$, be matrices of real numbers, and $S$ be symmetric. Then $\mu= e^{2\Phi(x)}dx=e^{\langle x, Sx\rangle}dx$, with
$\Phi(x)=\frac12 \langle x, Sx\rangle$, is an infinitesimally invariant measure for $(L^{A,G},C_0^{\infty}(\mathbb{R}^d))$ defined by \eqref{LAG}, if and only if 
\begin{eqnarray}\label{SOHHDlinalg1}
SG+G^TS=2SAS\quad \text{and}\quad \mathrm{trace}(G-AS)=0. 
\end{eqnarray}
Moreover, \eqref{SOHHDlinalg1} is equivalent to the following system of equations
\begin{equation}\label{SOHHDlin5}
\begin{cases}
\ \sum_{i=1}^{d}s_{ij}h_{ij} =0, \qquad 1\le j \le d, \\[6pt]
\  \sum_{i=1}^{d}(s_{ij}h_{ik} +s_{ik}h_{ij})= 0, \qquad (j,k)\in \{1,\ldots, d\}^2, \  j <k, \\[6pt]
\ \sum_{i=1}^{d} h_{ii}=0,
\end{cases}
\end{equation}
where $H=(h_{ij})_{1\le i,j\le d}=G-AS$. In particular, 
\begin{eqnarray}\label{SOHHDfirst}
\tilde{G}x:=Sx+(G-AS)x, \qquad x\in \R^d,
\end{eqnarray}
is a WHHD of $\tilde{G}x$, if and only if it is a SOHHD of $\tilde{G}x$, if and only if $\mu=e^{\langle x, Sx\rangle}\text{d}x$ is an infinitesimally invariant measure for $(L^{A,G},C_0^{\infty}(\mathbb{R}^d))$.
\end{lemma}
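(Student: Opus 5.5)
The plan is to build on the preliminary computation preceding the lemma. By Theorem \ref{propstrohelm}(i), with $\mathbf{G}=Gx=ASx+Hx$ and $H=G-AS$, the measure $\mu=e^{\langle x,Sx\rangle}dx$ is infinitesimally invariant for $(L^{A,G},C_0^{\infty}(\R^d))$ if and only if $\mathrm{div}_\mu(Hx)=0$. Since $Hx$ is smooth, Theorem \ref{propstrohelm}(ii) turns this into the pointwise identity \eqref{SOHHDlin1}, which reads
\[
\langle x, SHx\rangle+\tfrac12\,\mathrm{trace}(H)=0\quad\text{for all } x\in\R^d .
\]
This is a polynomial identity in $x$. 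It has a constant part and a homogeneous quadratic part, and each must vanish separately. The constant part gives $\mathrm{trace}(H)=0$, as already shown in \eqref{SOHHDlin3}. The quadratic part gives $\langle x,SHx\rangle=0$ for all $x$. A quadratic form vanishes identically exactly when the symmetric part of its matrix is zero, so this is equivalent to $SH+H^TS=0$.

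Next I substitute $H=G-AS$ and use the symmetry of $S$ and $A$:
\[
SH+H^TS=SG+G^TS-SAS-SAS=SG+G^TS-2SAS .
\]
So the quadratic condition is exactly the Riccati equation $SG+G^TS=2SAS$. Since $\mathrm{trace}(H)=\mathrm{trace}(G-AS)$, together with the constant part this yields \eqref{SOHHDlinalg1}. Both directions follow at once, because every step above is an equivalence.

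To get the system \eqref{SOHHDlin5}, I expand the quadratic form exactly as in \eqref{SOHHDlin2}:
\[
\langle Sx,Hx\rangle=\sum_{j,k}\Big(\sum_i s_{ij}h_{ik}\Big)x_jx_k .
\]
Then I compare the coefficients of the monomials. The coefficient of $x_j^2$ is $\sum_i s_{ij}h_{ij}$. For $j<k$, the coefficient of $x_jx_k$ is $\sum_i(s_{ij}h_{ik}+s_{ik}h_{ij})$. These are precisely the diagonal and off-diagonal entries of $SH+H^TS$, with the off-diagonal ones counted once by symmetry. Adding $\mathrm{trace}(H)=0$ completes the equivalence with \eqref{SOHHDlinalg1}.

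For the final statement, I take $\tilde G x=Sx+Hx$ with $\Phi=\frac12\langle x,Sx\rangle$ and $\mathbf{B}=Hx$. Both are smooth, and $\mathrm{div}\,\mathbf{B}=\mathrm{trace}(H)$ is constant.
\begin{itemize}
\item If the decomposition is a WHHD, then $\mathrm{div}_\mu(Hx)=0$. The argument above then gives both $\mathrm{trace}(H)=0$ and $\langle Sx,Hx\rangle\equiv 0$. So $\mathrm{div}(Hx)=0$ and $\langle\nabla\Phi,\mathbf{B}\rangle=0$ hold everywhere, which is an SOHHD.
\item Conversely, an SOHHD is an OHHD and hence a WHHD by \eqref{equidivfre3}.
\item Finally, the WHHD condition $\mathrm{div}_\mu(Hx)=0$ is the same condition as infinitesimal invariance of $\mu$ for $L^{A,G}$, by Theorem \ref{propstrohelm}(i) with the decomposition $G x=ASx+Hx$.
\end{itemize}
The weight is the same $\mu$ in both settings, even though the gradient part carries $A$ in one and not in the other.

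The only point that needs care is this bookkeeping of which operator and which decomposition is meant. The algebra itself is routine, and the key structural fact is that the constant and quadratic parts of the identity split.
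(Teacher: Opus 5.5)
Your proposal is correct and follows essentially the same route as the paper. You reduce infinitesimal invariance via Theorem~\ref{propstrohelm} to the pointwise identity $\langle x,SHx\rangle+\frac12\mathrm{trace}(H)=0$, split off the constant term, identify the vanishing quadratic form with $SH+H^TS=0$, compare coefficients to get \eqref{SOHHDlin5}, and read off WHHD $\Leftrightarrow$ SOHHD from the same splitting. The one small imprecision is that the diagonal entries of $SH+H^TS$ are $2\sum_i s_{ij}h_{ij}$, not $\sum_i s_{ij}h_{ij}$; this factor does not affect the vanishing conditions.
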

\begin{proof}
By \eqref{SOHHDlin3} and \eqref{SOHHDlin1}, we can see that $\mu=e^{\langle x, Sx\rangle}\text{d}x$ is an infinitesimally invariant measure for $(L^{A,G},C_0^{\infty}(\mathbb{R}^d))$, if and only if 
\begin{eqnarray*}
\langle x,SHx\rangle=0, \quad x\in \R^d\quad \text{and}\quad \frac12 \mathrm{trace}(H)=0. 
\end{eqnarray*}
$\langle x,SHx\rangle=0$ for all $x\in \R^d$ means that the symmetric part of $SH$ is zero, i.e. $SH+(SH)^T=SH+H^T S=0$. Substituting $H=G-AS$, we obtain the first statement. The equivalence of \eqref{SOHHDlin5} and \eqref{SOHHDlinalg1} follows, since \eqref{SOHHDlin2} is equivalent to \eqref{SOHHDlin3} and 
\begin{eqnarray*}\label{SOHHDlin4}
\sum_{i=1}^{d}\big (\sum_{j,k=1}^{d}s_{ij}h_{ik} x_jx_k\big )=\sum_{j=1}^{d}\big (\sum_{i=1}^{d}s_{ij}h_{ij} x_j^2\big )+\sum_{j<k}\big (\sum_{i=1}^{d}(s_{ij}h_{ik} +s_{ik}h_{ij})x_j x_k\big )=  0
\end{eqnarray*}
for any $x=(x_1,\ldots,x_d)\in \R^d$, where $\sum_{j<k}$ stands for the sum over all pairs $(j,k)\in \{1,\ldots, d\}^2$ with $j<k$.\\
Consider $\tilde{G}:=S+H(=\nabla \Phi +H)$. Using \eqref{SOHHDlin3} in \eqref{SOHHDlin1}, we see that 
\begin{eqnarray*}
\mathrm{div}_{\mu}(Hx)=0\quad \Longleftrightarrow \quad \forall x\in \R^d:\quad \langle Sx,Hx\rangle=0 \quad \text{and}\quad \frac12 \mathrm{div}(Hx)=0,
\end{eqnarray*}
which implies the equivalence of WHHD and SOHHD by checking the conditions of Definition \ref{helmdefn}. The remaining equivalence related to the infinitesimal invariance follows from Theorem \ref{propstrohelm}(i) applied to $\mathbf{B}=Hx$. 
\end{proof}
\subsubsection{Closed form solution for the case $d=2$}
As we will see in Theorem \ref{sol:algRicgen}, one can find a solution $S$ to \eqref{SOHHDlinalg1} using the theory of algebraic Riccati equations and the continuous time Lyapunov equation.  Before doing that, we will discuss shortly the direct approach via \eqref {SOHHDlin5}. The first two lines of \eqref{SOHHDlin5} consist of $\frac{d(d+1)}{2}$ equations and have exactly the same number of  free variables $s_{ij}$, $i\le j$. It should therefore be solvable in general. However, \eqref{SOHHDlin5} consists of quadratic equations with side condition $\mathrm{trace}(H)=0$ and seems to be difficult to solve by hand by elimination of variables. For $d=2$, one can derive a closed form solution of \eqref{SOHHDlin5} by direct computations (see \cite[Proposition 1]{Sud2} for the case $A=id$, and below for $A\not= id$). For $d\ge 3$ it is unclear to us whether such a closed form solution exists.

\begin{lemma}\label{allG}
Consider a general  $2\times 2$ matrix $G$ of real numbers, i.e.
\[
G=\begin{pmatrix}a&b\\ c&d\end{pmatrix},\qquad
\quad a,b,c,d\in \R.
\]
Then $G$ satisfies exactly one of the following conditions:
\begin{itemize}
\item[(i)] $\mathrm{trace}(G)=a+d=0$.
\item[(ii)] $\mathrm{trace}(G)=a+d \neq 0$ and $G$ singular, i.e. $\det(G)=ad-bc=0$
\item[(iii)] $\mathrm{trace}(G)=a+d \neq 0$ and $G$ non-singular, i.e. $\det(G)=ad-bc\not=0$.
\end{itemize}
In particular, in the present case of $d=2$, $(iii)$ is equivalent to the condition of Theorem \ref{mainlapuin}.
\end{lemma}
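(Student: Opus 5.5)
The trichotomy is a pure case distinction on the pair $(\mathrm{trace}(G),\det(G))$, so I would settle it first. Either $a+d=0$, which is case (i), or $a+d\neq 0$. In the latter situation either $ad-bc=0$, giving (ii), or $ad-bc\neq 0$, giving (iii). The three conditions are pairwise exclusive by construction: (i) is the negation of the trace condition shared by (ii) and (iii), and (ii) and (iii) differ exactly in whether $\det(G)$ vanishes. Hence every $G$ satisfies exactly one of them, and nothing beyond this bookkeeping is needed.

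For the ``in particular'' statement I have to anticipate the hypothesis of Theorem \ref{mainlapuin}, which is stated later. Since that theorem concerns solvability of the algebraic Riccati/Lyapunov equation \eqref{SOHHDlinalg1} via the continuous-time Lyapunov equation, I expect its condition to be a spectral non-resonance condition. My guess is $\lambda_i+\lambda_j\neq 0$ for all eigenvalues $\lambda_i,\lambda_j$ of $G$ (counted with multiplicity, including $i=j$), equivalently $\sigma(G)\cap\sigma(-G)=\emptyset$. This is the standard condition for $X\mapsto G^TX+XG$ to be invertible. The plan is to translate it into trace and determinant for $d=2$.

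Let $\lambda_1,\lambda_2\in\mathbb{C}$ be the eigenvalues of $G$, so that $\lambda_1+\lambda_2=a+d$ and $\lambda_1\lambda_2=ad-bc$. The possible pairwise sums are $2\lambda_1$, $2\lambda_2$ and $\lambda_1+\lambda_2$.
\begin{itemize}
\item The conditions $2\lambda_1\neq 0$ and $2\lambda_2\neq 0$ together say $\lambda_1\lambda_2\neq 0$, i.e. $\det(G)\neq 0$.
\item The condition $\lambda_1+\lambda_2\neq 0$ says $\mathrm{trace}(G)\neq 0$.
\end{itemize}
The spectral condition is therefore exactly case (iii). This handles complex-conjugate eigenvalues and repeated eigenvalues uniformly, since only symmetric functions of $\lambda_1,\lambda_2$ enter.

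The only real obstacle is that the precise hypothesis of Theorem \ref{mainlapuin} is not yet visible. If it is instead phrased as invertibility of the Lyapunov operator $X\mapsto G^TX+XG$ on symmetric matrices, I would use the same eigenvalue computation. On the $3$-dimensional space of symmetric $2\times 2$ matrices, the eigenvalues of this operator are precisely $2\lambda_1$, $2\lambda_2$ and $\lambda_1+\lambda_2$. Alternatively, one can compute its determinant directly in the basis $E_{11},E_{12}+E_{21},E_{22}$, which should give a nonzero constant multiple of $(a+d)\,(ad-bc)$. In either formulation the equivalence with (iii) follows.
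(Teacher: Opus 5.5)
Your proposal is correct and follows the paper's proof essentially verbatim: the trichotomy is a trivial case split, and your guessed hypothesis $\lambda_i+\lambda_j\neq 0$ for all $1\le i,j\le d$ (including $i=j$) is exactly the condition of Theorem \ref{mainlapuin}. You then translate it just as the paper does, via $\lambda_1+\lambda_2=\mathrm{trace}(G)$ and $2\lambda_1\neq 0,\ 2\lambda_2\neq 0 \Leftrightarrow \det(G)\neq 0$; the extra remark on the Lyapunov operator restricted to symmetric matrices is unnecessary but also correct.
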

\begin{proof}
That $G$ satisfies exactly one of the conditions $(i)-(iii)$ is clear. So, we only have to show the equivalence of condition $(iii)$ and the condition of Theorem \ref{mainlapuin}. For this let $\sigma(G) = \{ \lambda_1,\lambda_2 \} \subset \mathbb{C}$ be the set of eigenvalues of $G$ counting multiplicities. Since $\mathrm{trace}(G)=\lambda_1 + \lambda_2$, the trace condition $\mathrm{trace}(G)=a+d \neq 0$ in $(iii)$ is equivalent to $\lambda_1 + \lambda_2 \neq 0$. Moreover, $G$ non-singular is equivalent to $\lambda_1, \lambda_2 \not=0$, which is further equivalent to $\lambda_1+\lambda_1=2\lambda_1\not=0$ and $\lambda_2+\lambda_2=2\lambda_2\not=0$. 
\end{proof}
\begin{theorem} \label{allGOHHD}
Consider two $2\times 2$ matrices $G, A$ of real numbers, where $A$ is symmetric and positive definite, i.e.
\[
G=\begin{pmatrix}a&b\\ c&d\end{pmatrix},\qquad
A=\begin{pmatrix}\alpha&\beta\\ \beta&\gamma\end{pmatrix}
\quad(\alpha>0,\ \delta:=\det(A)=\alpha\gamma-\beta^{2}>0).
\]
Then, there exists a symmetric matrix $S=\begin{pmatrix}s_{11}& s_{12}\\ s_{12}& s_{22}\end{pmatrix}$ solving \eqref{SOHHDlinalg1}, and which is of the following form:
\begin{itemize}
\item[(i)]
If $\mathrm{trace}(G)=a+d=0$, we can choose $S=0$ as symmetric solution.
\item[(ii)] If $\mathrm{trace}(G)=a+d \neq 0$ and $G$ singular, i.e. $\det(G)=ad-bc=0$, we can choose
\begin{eqnarray*}
S&=&\frac{a+d}{\alpha a^2+2\beta ab + \gamma b^2}\begin{pmatrix}a^2& ab\\ ab& b^2\end{pmatrix}, \qquad \text{if }\ b\not =0,\\[3pt]
S&=&\frac{a+d}{\alpha c^2+2\beta cd + \gamma d^2}\begin{pmatrix}c^2& cd\\ cd& d^2\end{pmatrix}, \qquad \text{if }\ b=0,\ c\not=0,
\end{eqnarray*}
and in case $b=c=0$, we may choose 
\begin{eqnarray*}
S\ =\ \begin{pmatrix}\frac{a}{\alpha}& 0\\ 0& 0\end{pmatrix}, \qquad \text{if }\ a\not =0,\qquad
S\ =\ \begin{pmatrix}0& 0\\ 0& \frac{d}{\gamma}\end{pmatrix}, \qquad \text{if }\ d\not=0.
\end{eqnarray*}
\item[(iii)] If $\mathrm{trace}(G)=a+d \neq 0$  and $G$ non-singular, i.e. $\det(G)=ad-bc\not=0$, we can choose
\begin{eqnarray*}
S\ =\ \kappa\begin{pmatrix}\gamma a(a+d) + \alpha c^2 - \gamma bc - 2\beta ac& \alpha cd + \gamma ab - 2\beta ad\\ \alpha cd + \gamma ab - 2\beta ad& \alpha d(a+d) + \gamma b^2 - \alpha bc - 2\beta bd\end{pmatrix}, 
\end{eqnarray*}
where
\[
\kappa=\frac{a+d}{ (\alpha \gamma-\beta^2) (a+d)^2 +\big( \gamma b - \alpha c + \beta(a-d)\big)^2}.
\]
\end{itemize}
\end{theorem}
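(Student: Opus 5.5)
The plan is to verify, case by case, that the proposed $S$ satisfies both conditions in \eqref{SOHHDlinalg1}: the Riccati identity $SG+G^TS=2SAS$ and the trace condition $\mathrm{trace}(AS)=\mathrm{trace}(G)$. By Lemma \ref{lemma algebraic riccati}, this is all that is needed. Lemma \ref{allG} guarantees that cases (i)--(iii) exhaust all $G$.

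\emph{Case (i).} For $S=0$ both sides of the Riccati identity vanish. The trace condition reduces to $\mathrm{trace}(G)=0$, which is the hypothesis.

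\emph{Case (ii).} I would exploit that a singular $G$ has rank at most one. Suppose $b\neq 0$, and put $v=(a,b)^T$ and $t=d/b$. From $ad=bc$ we get $c=ta$, so the second row is $t$ times the first and $G=w v^T$ with $w=(1,t)^T$. Take the ansatz $S=\lambda vv^T$. Then
\[
SG=\lambda\langle v,w\rangle vv^T,
\]
which is symmetric, so
\[
SG+G^TS=2\lambda\langle v,w\rangle vv^T, \qquad 2SAS=2\lambda^2\langle v,Av\rangle vv^T .
\]
Since $A$ is positive definite and $v\neq 0$, the choice $\lambda=\langle v,w\rangle/\langle v,Av\rangle$ solves the Riccati identity. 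Here
\[
\langle v,w\rangle=a+tb=a+d, \qquad \langle v,Av\rangle=\alpha a^2+2\beta ab+\gamma b^2,
\]
which reproduces the stated formula. The trace condition is then automatic:
\[
\mathrm{trace}(AS)=\lambda\langle v,Av\rangle=a+d=\mathrm{trace}(G).
\]
If $b=0$ and $c\neq 0$, then $ad=0$ forces $a=0$ (as $c\ne 0$ and $ad=bc=0$ with $d\neq 0$ from $a+d\neq0$ when $a=0$; in any case the first row is $(a,0)$ and the second is $(c,d)$ with $ad=0$). I would handle this by the same rank-one argument using the nonzero row $v=(c,d)^T$, writing $G=w v^T$ for a suitable $w$ with $\langle v,w\rangle=a+d$. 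The diagonal subcase $b=c=0$ has $ad=0$ and $a+d\neq0$, so exactly one of $a,d$ is nonzero. There one checks the given diagonal $S$ entrywise; for instance, with $a\ne0$ and $S=\mathrm{diag}(a/\alpha,0)$, the $(1,1)$ entries are $2a^2/\alpha$ on both sides and all other entries vanish.

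\emph{Case (iii).} Here I expect $S$ to be invertible and would set $X:=S^{-1}$. Multiplying the Riccati identity by $S^{-1}$ on both sides turns it into the Lyapunov equation
\[
GX+XG^T=2A .
\]
The trace condition then follows automatically. Multiplying the Lyapunov equation on the right by $S$ and taking traces gives $\mathrm{trace}(G)+\mathrm{trace}(XG^TS)=2\,\mathrm{trace}(AS)$. Since $\mathrm{trace}(XG^TS)=\mathrm{trace}(G^T)=\mathrm{trace}(G)$, we get $\mathrm{trace}(AS)=\mathrm{trace}(G)$.

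The Lyapunov equation is a $3\times3$ linear system in $(x_{11},x_{12},x_{22})$. Its determinant is a nonzero multiple of $(\lambda_1+\lambda_2)\lambda_1\lambda_2=\mathrm{trace}(G)\det(G)$, which is nonzero precisely under (iii), in line with Lemma \ref{allG}. So I would solve it by Cramer's rule, compute $\det X$, and invert to obtain $S=X^{-1}$.

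\emph{Main obstacle.} The hardest step is showing that $\det X\neq0$ and that $X^{-1}$ coincides with the displayed matrix, including the normalising factor $\kappa$. I expect $\det X$ to be a positive multiple of
\[
\delta(a+d)^2+\bigl(\gamma b-\alpha c+\beta(a-d)\bigr)^2 .
\]
This quantity is strictly positive because $\delta>0$ and $a+d\ne0$, which gives both invertibility and the well-definedness of $\kappa$. If the inversion becomes unwieldy, the fallback is to substitute the stated $S$ directly into the system \eqref{SOHHDlin5}: the three quadratic equations and the trace equation. One then checks the resulting polynomial identities in $a,b,c,d,\alpha,\beta,\gamma$, using $\kappa\cdot(\text{denominator})=a+d$.
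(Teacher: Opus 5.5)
Your proposal is correct and follows the paper's proof: $S=0$ in case (i), the rank-one ansatz $S=\kappa_v vv^T$ with $\kappa_v=(a+d)/\langle v,Av\rangle$ in case (ii), and in case (iii) passing to the Lyapunov equation for $X=S^{-1}=-P$ and inverting explicitly (your $3\times 3$ system is the paper's Kronecker system $\mathcal{K}$ restricted to the invariant subspace of symmetric matrices).

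There are two harmless slips. First, with $b=0\neq c$ the condition $ad=0$ does not force $a=0$ (e.g.\ $G=\begin{pmatrix}1&0\\1&0\end{pmatrix}$), but $G=wv^T$ with $v=(c,d)^T$, $w=(a/c,1)^T$ works in every subcase. Second, one actually finds $\det X=K/\bigl((a+d)^2\det G\bigr)$ with $K=\delta(a+d)^2+\bigl(\gamma b-\alpha c+\beta(a-d)\bigr)^2$, so $\det X$ has the sign of $\det G$ rather than being positive. It is nonzero, which is all you need; alternatively, for symmetric $X$, $Xv=0$ would give $0=v^T(GX+XG^T)v=2v^TAv>0$, as in the paper.
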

\begin{proof}
(i) If $\mathrm{trace}(G)=0$, we can choose $S=0$. Then, $S$ is obviously symmetric and satisfies \eqref{SOHHDlinalg1}.\\
(ii) $a+d \neq 0$ and $ad=bc$, then $G$ is a rank-one matrix and can be written as outer product of two vectors $u$ and $v^T$. For these vectors the matrix notation is used. There are four different cases:
\begin{eqnarray*}
G&=& u v^T\ =\ \begin{pmatrix} 1 \\ \frac{d}{b} \end{pmatrix} \begin{pmatrix} a & b \end{pmatrix}\ = \ \begin{pmatrix}a& b\\ \frac{ad}{b}& d\end{pmatrix}, \quad \text{if }\ b\not =0,
\end{eqnarray*}
\begin{eqnarray*}
G&=&uv^T\ =\ 
\begin{pmatrix} \frac{a}{c} \\ 1 \end{pmatrix} \begin{pmatrix} c & d \end{pmatrix}
\ = \ \begin{pmatrix}a& \frac{ad}{c}\\ c& d\end{pmatrix}, \quad \text{if }\ b=0,\ c\not=0,
\end{eqnarray*}
and if $b=c=0$
\begin{eqnarray*}
G&=&uv^T\ =\ 
\begin{pmatrix} 1 \\ 0 \end{pmatrix} \begin{pmatrix} a& 0 \end{pmatrix}
\ = \ \begin{pmatrix}a& 0\\ 0& 0\end{pmatrix}, \quad \text{if }\ d=0,
\end{eqnarray*}
and
\begin{eqnarray*}
G&=&uv^T\  =\ 
\begin{pmatrix} 0 \\ 1 \end{pmatrix}\begin{pmatrix} 0& d\end{pmatrix}
\ = \ \begin{pmatrix}0& 0\\ 0& d\end{pmatrix}, \quad \text{if }\ a=0.
\end{eqnarray*}
For such matrices $G$ one can choose the ansatz $S=\kappa_v vv^T$ for a solution, where $\kappa_v$ is a constant depending on $v$. Then, noting that $v^T u=\mathrm{trace}(G)=a+d$ and that $u^T  v=\mathrm{trace}(G^T)=a+d$, we get
\begin{eqnarray*}
SG+G^{T}S\ =\ \kappa_v vv^T u v^T+(u v^T)^T\kappa_v vv^T\ =\ \kappa_v v(v^T u)v^T+\kappa_v v (u^T  v)v^T\ = \ 2\kappa_v (a+d) vv^T,
\end{eqnarray*}
and
\begin{eqnarray*}
2SAS \ =\ 2 \kappa_v  vv^T A \kappa_v  vv^T \ =\ 2 \kappa_v^2  v(v^T A v)v^T\ = \ 2 \kappa_v^2 \langle v,Av\rangle vv^T.
\end{eqnarray*}
thus in order to have $SG+G^{T}S=2SAS$, we need to fix $\kappa_v$ as
\[
\kappa_v=\frac{a+d}{\langle v,Av\rangle}.
\]
Moreover, it then also holds that 
\begin{eqnarray*}
\mathrm{trace}(AS)=\kappa_v \,\mathrm{trace}(Avv^T)=\kappa_v \,\mathrm{trace}(v^TAv)=\kappa_v \,\mathrm{trace}(\langle v,Av\rangle)=a+d=\mathrm{trace}(G).
\end{eqnarray*}
Collecting the results, we obtain the assertion, namely
\begin{equation*}\label{}
S=\begin{cases}
\medskip
\ \kappa_v \begin{pmatrix} a \\ b \end{pmatrix}\begin{pmatrix} a & b \end{pmatrix}\ = \ \frac{a+d}{\alpha a^2+2\beta ab + \gamma b^2} \begin{pmatrix}a^2& ab\\ ab& b^2\end{pmatrix}, \quad \text{if }\ b\not =0,\\ 
\medskip
\ \kappa_v \begin{pmatrix} c \\ d \end{pmatrix}\begin{pmatrix} c & d \end{pmatrix}\ = \ \frac{a+d}{\alpha c^2+2\beta cd + \gamma d^2} \begin{pmatrix}c^2& cd\\ cd& d^2\end{pmatrix}, \quad \text{if }\ b=0,\ c\not=0,\\
\medskip
\ \kappa_v \begin{pmatrix} a \\ 0 \end{pmatrix}\begin{pmatrix} a & 0 \end{pmatrix}\ = \ \frac{a}{\alpha a^2}  \begin{pmatrix}a^2& 0\\ 0& 0\end{pmatrix}, \quad \text{if }\ d=0,\\
\medskip
\ \kappa_v \begin{pmatrix} 0 \\ d \end{pmatrix} \begin{pmatrix} 0 & d \end{pmatrix}\ = \ \frac{d}{\gamma d^2} \begin{pmatrix} 0& 0\\ 0& d^2\end{pmatrix}, \quad \text{if }\ a=0.
  \end{cases}
\end{equation*}
(iii) In case $\mathrm{trace}(G)=a+d \neq 0$ and $G$ non-singular, we know by Lemma \ref{allG}  that the conditions of Theorems \ref{mainlapuin} and \ref{explicitconstruction} are met. Therefore applying these theorems (for the notation used below, see Section \ref{section:algRiceq}), we obtain that
$$
S=(-P)^{-1}, \qquad \text{where }\text{\rm Vec}(P) = - (\mathcal{K})^{-1} \text{\rm Vec}(2A), 
$$
with
\[\mathcal{K}=I\otimes G
+G\otimes I=
\begin{pmatrix}
a&b&0&0\\
c&d&0&0\\
0&0&a&b\\
0&0&c&d\\
\end{pmatrix}+\begin{pmatrix}
a&0&b&0\\
0&a&0&b\\
c&0&d&0\\
0&c&0&d\\
\end{pmatrix}=\begin{pmatrix}
2a&b&b&0\\
c&a+d&0&b\\
c&0&a+d&b\\
0&c&c&2d\\
\end{pmatrix}.
\]
Using standard inversion formulas for the $4\times 4$  matrix $\mathcal{K}$ and the $2\times 2$ matrix $P$, we can readily check that the assertion holds.
\end{proof}\\
Summarizing, we get the following proposition which extends \cite[Proposition 1]{Sud2} to the case $A\not=id$.
\begin{proposition}\label{linearSOHHD}
Let $\mathbf{G}(x):= Gx$, $x\in \R^2$, be a linear vector field represented by a $2\times 2$ matrix $G$ of real numbers, $\Phi(x):=\frac12 \langle x, Sx\rangle$, $x\in \R^2$,
where $A,S$ are defined as in Theorem \ref{allGOHHD}, $\mu:= e^{2\Phi(x)}\text{d}x$. Then 
$$
\tilde Gx=Sx+(G-AS)x, \quad x\in \R^2
$$ 
is a SOHHD and $\mu$ is an infinitesimally invariant measure for $\big (\frac{1}{2} \mathrm{trace}(A\nabla^2)+ \langle \mathbf{G}, \nabla \cdot \rangle, C_0^{\infty}(\R^2)\big )$.\\
\end{proposition}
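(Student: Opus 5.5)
The plan is to combine Theorem \ref{allGOHHD} with Lemma \ref{lemma algebraic riccati}; no new analysis should be needed. By Lemma \ref{allG}, every real $2\times 2$ matrix $G$ falls into exactly one of the cases (i)--(iii). Theorem \ref{allGOHHD} provides, in each case, an explicit symmetric matrix $S$ that solves \eqref{SOHHDlinalg1}, that is,
\[
SG+G^TS=2SAS \quad\text{and}\quad \mathrm{trace}(G-AS)=0.
\]
We fix this $S$ and put $\Phi(x)=\frac12\langle x,Sx\rangle$ and $\mu=e^{\langle x,Sx\rangle}dx$.

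With $S$ solving \eqref{SOHHDlinalg1}, Lemma \ref{lemma algebraic riccati} applies directly. Its first assertion gives that $\mu$ is an infinitesimally invariant measure for $(L^{A,G},C_0^\infty(\R^2))$, and $L^{A,G}$ is exactly $\frac12\mathrm{trace}(A\nabla^2)+\langle \mathbf{G},\nabla\cdot\rangle$. Its final assertion says that infinitesimal invariance of $\mu$ is equivalent to \eqref{SOHHDfirst}, i.e. $\tilde Gx=Sx+(G-AS)x$, being an SOHHD of $\tilde Gx$. Both claims of the proposition follow.

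As a consistency check, we can verify the SOHHD property directly. Set $H=G-AS$. The Riccati equation gives $SH+H^TS=0$, so $\langle Sx,Hx\rangle=0$ for all $x$. The trace condition gives $\mathrm{div}(Hx)=\mathrm{trace}(H)=0$. Both $\Phi$ and $x\mapsto Hx$ are smooth, so all the conditions of Definition \ref{helmdefn} hold pointwise.

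The only real work is already contained in Theorem \ref{allGOHHD}. Case (iii) of that theorem relies on Theorems \ref{mainlapuin} and \ref{explicitconstruction} from Section \ref{section:algRiceq}, and on checking the explicit formula for $S$ through the inverse of $\mathcal{K}$. If one wanted an independent check, the main obstacle would be verifying by direct substitution that the stated $S$ in case (iii) satisfies both equations in \eqref{SOHHDlinalg1}. Since $\kappa$ has a denominator that is strictly positive when $a+d\neq 0$ and $\delta>0$, this reduces to a polynomial identity that can be expanded symbolically. For the proposition itself, we simply invoke the theorem.
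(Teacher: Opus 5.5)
Your proposal is correct and follows the paper's own route: the paper presents this proposition as a summary of Theorem \ref{allGOHHD}, which supplies a symmetric $S$ solving \eqref{SOHHDlinalg1}, combined with Lemma \ref{lemma algebraic riccati}, which turns that into infinitesimal invariance of $\mu$ and the SOHHD property of $Sx+(G-AS)x$. Your direct check via $SH+H^TS=0$ and $\mathrm{trace}(H)=0$ for $H=G-AS$ is precisely the content of that lemma's proof.
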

\subsubsection{The general case $d\ge 2$}
Proposition \ref{propintegrep} below provides in finite dimensions an explicit characterization of infinitesimally invariant measures for the Ornstein-Uhlenbeck operator and invariant measures for the Ornstein-Uhlenbeck semigroup. 
In particular, by using the explicit form of the Lyapunov equation and the algebraic Riccati equation, Proposition \ref{propintegrep}(ii) represents an alternative proof of the approach in \cite[Proposition 9.3.1]{LB07} or \cite[Section 3]{LMP20}. Moreover, Proposition \ref{propintegrep}(iii) provides a simple alternative proof of the result presented in \cite[Section 11.2.3]{DZ92} (see also \cite[Remark 9.3.2]{LB07}).

\begin{proposition}\label{propintegrep}
Let $(L^{A,G},C_0^{\infty}(\mathbb{R}^d))$ be given as in \eqref{LAG}, where $A$ is symmetric and positive definite and $G$ is arbitrary. Then it holds:
\begin{itemize}
\item[(i)] There exists a symmetric matrix of real numbers $S=(s_{ij})_{1\le i,j\le d}$, such that $\mu=e^{\langle x, Sx\rangle}\text{d}x$ is an infinitesimally invariant measure for $(L^{A,G},C_0^{\infty}(\mathbb{R}^d))$.
\item[(ii)] Assume that $G$ is a Hurwitz matrix, i.e. all eigenvalues of $G$ have strictly negative real parts. Let
$$
S=\left(-\int_{0}^{\infty} e^{Gt}\,(2A)\,e^{G^T t}\,dt \right)^{-1}.
$$
Then $S$ is negative definite and $\mu=e^{\langle x, Sx\rangle}\text{d}x$ is the unique infinitesimally invariant measure for $(L^{A,G},C_0^{\infty}(\mathbb{R}^d))$ and the unique (finite) invariant measure for the semigroup associated to the closed extension of $(L^{A,G},C_0^{\infty}(\mathbb{R}^d))$ and its associated stochastic process, both constructed in \cite{LST22}. (Here uniqueness is meant up to a multiplicative constant.)\\
In particular, let $\tilde{G}:= S+(G-AS)$. Then, $\tilde{G}$ is Hurwitz and
$$
Sx+(G-AS)x
$$
is the unique SOHHD of $\tilde Gx$, and $(L^{A,G},C_0^{\infty}(\mathbb{R}^d))$ and $(L^{\widetilde{G}},C_0^{\infty}(\mathbb{R}^d))$ have the same unique (finite) infinitesimally invariant measure $\mu$.
\item[(iii)] Suppose that the semigroup $(T_t)_{t>0}$ associated to the closed extension of $(L^{A,G},C_0^{\infty}(\mathbb{R}^d))$ or its associated stochastic process, both constructed in \cite{LST22}, admit a finite invariant measure. Then $G$ is a Hurwitz matrix. Thus, the OU-semigroup $(T_t)_{t>0}$ or the OU-process admit a finite invariant measure if and only if $G$ is a Hurwitz matrix.
\end{itemize}
\end{proposition}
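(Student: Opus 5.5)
By Lemma \ref{lemma algebraic riccati}, everything reduces to producing a symmetric $S$ with $SG+G^TS=2SAS$ and $\mathrm{trace}(G-AS)=0$. The proof then has three parts: existence of $S$ for arbitrary $G$, identification of $S$ in the Hurwitz case together with uniqueness, and the converse that a finite invariant measure forces $G$ to be Hurwitz.

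\emph{Part (i).} I would construct $S$ by a spectral reduction rather than only in the Hurwitz case.
\begin{itemize}
\item[(a)] The candidates $S=0$ (when $\mathrm{trace}G=0$) and invertible $S$ are too restrictive. If $S$ is invertible, set $P=-S^{-1}$. Multiplying the Riccati equation by $S^{-1}$ on both sides gives the Lyapunov equation $GP+PG^T=-2A$. This equation is uniquely solvable when $\lambda_i+\lambda_j\neq 0$ for all eigenvalues of $G$, which I take to be the condition of Theorem \ref{mainlapuin}, used together with Theorem \ref{explicitconstruction}.
\item[(b)] With $P$ invertible we get $AS=-AP^{-1}$. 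Using $GP+PG^T=-2A$, i.e. $-2AP^{-1}=G+PG^TP^{-1}$, we find $\mathrm{trace}(AS)=\tfrac12\mathrm{trace}(G+G^T)=\mathrm{trace}(G)$. So the trace condition is automatic.
\item[(c)] For general $G$, I would take a real $G$-invariant splitting $\mathbb R^d=V_1\oplus V_2$. The eigenvalues of $G|_{V_1}$ should satisfy the non-resonance condition, and $V_2$ should collect a maximal family of eigenvalues with pairwise vanishing sums (zero, $\pm\lambda$ pairs, purely imaginary ones) on which the trace is zero.
\item[(d)] I would then look for $S$ supported on the annihilator of $V_2$, i.e. $S=\Pi^T S_1\Pi$, with $S_1$ solving the reduced Riccati equation for the quotient operator. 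Here $A$ is replaced by its compression $\Pi A\Pi^T$, which is still positive definite.
\item[(e)] Since $\mathrm{trace}(G)=\mathrm{trace}(G|_{V_1})$, the trace condition should follow as in step (b).
\end{itemize}
This mirrors the rank-one ansatz in Theorem \ref{allGOHHD}(ii). \textbf{I expect the bookkeeping of (c)--(d) — choosing $V_2$ correctly and checking the reduced equations — to be the main obstacle.} I would rely on Section \ref{section:algRiceq} for it.

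\emph{Part (ii).} For Hurwitz $G$, the integral $P=\int_0^\infty e^{Gt}(2A)e^{G^Tt}\,dt$ converges and is positive definite.
\begin{itemize}
\item[(a)] Differentiating $e^{Gt}2Ae^{G^Tt}$ and integrating gives $GP+PG^T=-2A$. Hence $S=-P^{-1}$ is negative definite and, by the computation in (i)(a)--(b), solves \eqref{SOHHDlinalg1}.
\item[(b)] Thus $\mu$ is a finite infinitesimally invariant measure. I would check $\langle Gx,x\rangle\le c(1+\|x\|^2)$ and invoke \cite[Corollary 3.27]{LST22} to get conservativeness, exactly as in Example \ref{counterexamplestrong}.
\item[(c)] Finiteness of $\mu$ plus conservativeness then gives, via \cite[Lemma 3.18]{LT22}, uniqueness of $\mu$ both as infinitesimally invariant measure and as invariant measure for $(T_t)$ and the process.
\item[(d)] The SOHHD statement follows from Lemma \ref{lemma algebraic riccati}. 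Uniqueness of the SOHHD of $\tilde Gx$ follows from Theorem \ref{nonexofOHHD} once $\mu$ is the unique infinitesimally invariant measure for $L^{\tilde G}$. For that, I would apply the same finiteness-and-conservativeness argument to $L^{\tilde G}$, using $\mathrm{div}_\mu(Hx)=0$.
\item[(e)] To see $\tilde G=S+H$ is Hurwitz, note that $S\tilde G+\tilde G^TS=2S^2$ by the skew-symmetry of $SH$. With $Q=-S^{-1}>0$ this gives $\tilde GQ+Q\tilde G^T=-2I$, and the converse Lyapunov theorem yields that $\tilde G$ is Hurwitz.
\end{itemize}

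\emph{Part (iii).} Suppose a finite invariant measure $\nu$ exists.
\begin{itemize}
\item[(a)] $\nu$ is infinitesimally invariant. I would apply it to test functions approximating $x\mapsto\langle x,Mx\rangle$ with $M>0$, truncating and passing to the limit.
\item[(b)] Alternatively, I would use the explicit Gaussian transition law of the OU process: its covariance $\int_0^t e^{Gs}Ae^{G^Ts}\,ds$ must stay bounded and $e^{Gt}x$ must remain tight.
\item[(c)] I would pick an eigenvector $v$ of $G^T$ with eigenvalue $\lambda$, $\mathrm{Re}\lambda\ge0$. The scalar process $\langle v,X_t\rangle$ is a complex OU process whose variance grows without bound, since $v^*Av>0$. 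This contradicts stationarity under a probability measure.
\item[(d)] Hence $G$ is Hurwitz, and combining with (ii) gives the equivalence.
\end{itemize}
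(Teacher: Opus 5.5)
Your parts (i) and (ii) are essentially the paper's argument. Part (iii) takes a genuinely different route.

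\textbf{Part (i).} The paper (Theorem \ref{sol:algRicgen}) proceeds in three steps:
\begin{itemize}
\item it conjugates by $\sqrt{A}$ to reduce to $A=id$;
\item it uses the real Schur reordering of Lemma \ref{reorderSchur} to place a trace-zero, conjugation-closed resonant part $\sigma_0(G)$ in an invariant leading block;
\item it sets $S=U\,\mathrm{diag}(0,S_2)\,U^T$, with $S_2$ from Theorem \ref{explicitconstruction} applied to $R_{22}$.
\end{itemize}
Your ansatz $S=\Pi^TS_1\Pi$ with the compressed diffusion $\Pi A\Pi^T$ is the same construction and handles general $A$ in one step.

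You should, however, drop the requirement in (c) that $V_1$ be a $G$-invariant complement. Such a complement with the needed spectrum can fail to exist. For $G=\begin{pmatrix}1&1&0\\0&1&0\\0&0&-1\end{pmatrix}$, one copy of the Jordan eigenvalue $1$ must go into $V_2=\mathrm{span}(e_1,e_3)$ and the other into the quotient, and no invariant complement of $V_2$ exists. Only $V_2$ needs to be invariant. Work with the quotient operator $G_1$ defined by $\Pi G=G_1\Pi$, and use $\mathrm{trace}(G)=\mathrm{trace}(G|_{V_2})+\mathrm{trace}(G_1)$ in (e). This is exactly why the paper uses a block-triangular rather than a block-diagonal form.

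\textbf{Part (ii).} This matches the paper:
\begin{itemize}
\item finiteness of $\mu$ plus automatic conservativeness of the linear drift gives uniqueness for both $L^{A,G}$ and $L^{\tilde G}$;
\item $S\tilde G+\tilde G^TS=2S^2$ and the Lyapunov theorem show that $\tilde G$ is Hurwitz;
\item Theorem \ref{nonexofOHHD}(ii) gives uniqueness of the decomposition.
\end{itemize}

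\textbf{Part (iii): the paper's route.} The paper stays at the level of the generator. A finite invariant measure is the unique infinitesimally invariant measure, by finiteness plus conservativeness. So by (i) it equals $e^{\langle x,Sx\rangle}dx$ for some symmetric solution $S$ of \eqref{SOHHDlinalg1}. Finiteness forces $S$ to be negative definite. Then $P=-S^{-1}>0$ solves $GP+PG^T=-2A$, hence $G$ is Hurwitz. This is the ``structural'' proof the paper advertises. It never touches the transition kernel, and it is the reason (i) is proved for arbitrary, non-Hurwitz $G$.

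\textbf{Part (iii): your route.} Yours is the classical probabilistic argument. It is independent of (i) and of the uniqueness theory, but two steps should be made explicit.
\begin{itemize}
\item You must identify the process of \cite{LST22} with the solution of $dX_t=GX_t\,dt+\sqrt{A}\,dW_t$, via weak existence plus pathwise uniqueness. Only then is its transition law $N\big(e^{Gt}x,\int_0^te^{Gs}Ae^{G^Ts}ds\big)$.
\item You need an anti-concentration step to turn ``the variance grows'' into a contradiction. Conditionally on $X_0=x$, $v^TX_t$ is Gaussian with $\mathbb{E}_x|v^TX_t-\mathbb{E}_xv^TX_t|^2=v^*Av\int_0^te^{2\mathrm{Re}\lambda s}\,ds\to\infty$. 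Hence $\sup_xP_x(|v^TX_t|\le R)\to0$. Stationarity, by contrast, gives $P_\nu(|v^TX_t|\le R)=\nu(|v^Tx|\le R)$, which is close to $1$ for large $R$.
\end{itemize}
Your step (a), testing with $\langle x,Mx\rangle$, is dispensable. On its own it would not work, since $\nu$ is not known to have second moments.
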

\begin{proof}
(i) By Lemma \ref{lemma algebraic riccati} the infinitesimal invariance of $\mu$ is equivalent to $S$ being a solution of \eqref{SOHHDlinalg1}, which follows from Theorem \ref{sol:algRicgen}.\\
(ii) By \cite[Theorem 4.6]{K02}, $P:=\int_{0}^{\infty} e^{Gt}\,(2A)\,e^{G^T t}\,dt$ is the unique positive definite solution of the Lyapunov equation $GP+PG^T=-2A$, where we used that together with $G$, $G^T$ is also Hurwitz. Thus $S:=(-P)^{-1}$ is negative definite, and upon multiplying the Lyapunov equation from the left with $S$ and from the right with $-S$, we can see that it is equivalent  to 
$SG+G^TS=2SAS$. The trace condition then holds automatically since $SG+G^TS=2SAS$ implies $G+S^{-1}G^TS=2AS$, hence 
\begin{align*} 
\mathrm{trace}(2AS) &= \mathrm{trace}\bigl(G+S^{-1}G^{T}S\bigr)= \mathrm{trace}(G)+\mathrm{trace}\bigl(S^{-1}G^{T}S\bigr)\\
&= \mathrm{trace}(G)+\mathrm{trace}\bigl(G^{T}SS^{-1}\bigr)= \mathrm{trace}(G)+\mathrm{trace}(G^{T})=2\,\mathrm{trace}(G).
\end{align*}
Thus by Lemma \ref{lemma algebraic riccati}, $\mu=e^{\langle x, Sx\rangle}\text{d}x$ is an infinitesimally invariant measure for $(L^{A,G},C_0^{\infty}(\mathbb{R}^d))$.
Moreover, it follows from the last part of  Lemma \ref{lemma algebraic riccati} that $\tilde Gx=Sx+(G-AS)x$ is a SOHHD. It remains to show the statements about uniqueness of the given SOHHD for $\tilde Gx$ and $\mu$. Since $Sx$, $(G-AS)x$, are continuously differentiable, the uniqueness of the SOHHD follows from the uniqueness of $\tilde Gx=Sx+(G-AS)x$ regarded as OHHD (cf. last part of  Definition \ref{helmdefn}). Since $S$ is negative definite, the measure
\[
\mu = e^{\langle x,Sx\rangle}\,dx
\]
is finite. By \cite[Proposition 3.13 and Theorem 3.15]{LT22} (both statements are applicable, since by the linearity of  $G$ conservativeness automatically holds), $\mu$ is the unique infinitesimally invariant measure for  $(L^{A,G},C_0^{\infty}(\mathbb{R}^d))$. It is an immediate consequence of  $SG+G^TS=2SAS$ that $\widetilde{G}$ satisfies the Lyapunov equation $S\widetilde G+\widetilde{G}^TS=2S^2$, and therefore by \cite[Theorem 4.6]{K02} $\widetilde{G}$ is Hurwitz. By what we have just shown for $L^{A,G}$, we then have that $\mu$ is the unique infinitesimally invariant measure for  $(L^{\widetilde{G}},C_0^{\infty}(\mathbb{R}^d))$. Therefore, by Theorem \ref{nonexofOHHD}(ii) (recall that $\Phi(x)=\frac12 \langle x, Sx\rangle$ and $\nabla \Phi (x)=Sx$), the decomposition $\tilde Gx=Sx+(G-AS)x$ is the unique OHHD of $\tilde Gx$. Finally, the uniqueness of $\mu$ as invariant measure follows from \cite[Theorem 3.17]{LT22}.\\
(iii) Suppose that the semigroup $(T_t)_{t>0}$ associated to the closed extension of $(L^{A,G},C_0^{\infty}(\mathbb{R}^d))$ or equivalently its associated stochastic process, both constructed in \cite{LST22}, admit a finite invariant measure $\mu$. Then by \cite[Proposition 3.13 and Theorem 3.15]{LT22}, $\mu$ is the unique infinitesimally invariant measure for  $(L^{A,G},C_0^{\infty}(\mathbb{R}^d))$. Consequently by (i), we must have that $\mu=e^{\langle x, Sx\rangle}\text{d}x$ for some symmetric matrix of real numbers $S=(s_{ij})_{1\le i,j\le d}$. Then $S$ is negative definite by finiteness of $\mu$ and a solution to \eqref{SOHHDlinalg1}. We hence conclude by \cite[Theorem 4.6]{K02} that $G$ must be Hurwitz.
\end{proof}

\section{Weighted Helmholtz--Hodge decompositions and nonlinear perturbations}\label{sec: WHHD and nonlinear}

In the previous section, we discussed the weighted and orthogonal Helmholtz--Hodge decompositions for a linear vector field $Gx$, where $G$ is a $d \times d$ matrix. In this section, we extend our analysis to cases where the linear vector field $Gx$ is perturbed by additional polynomial terms. Our main goal in this section is to determine the conditions on these perturbation terms that ensure the potential function $\Phi=\frac{1}{2} \langle Sx, x \rangle$, derived from the linear case, is robustly preserved, and moreover to present SOHHDs as well as unique explicit (infinitesimally) invariant measures for general classes of nonlinear drifts.

\subsection{Polynomial vector fields in $\R^d$}\label{sec:3.1}

In order to be precise by what we mean by polynomial vector fields, we start with the definition. A  homogeneous polynomial of degree $k\in  \N\cup\{0\}$ in $\R^d$ is an arbitrary non-zero linear combination of monomials of the form
\[
x_1^{i_1}x_2^{i_2}\ldots x_d^{i_d}, \qquad i_1+i_2+\ldots+i_d=k, \quad  i_1, i_2, \ldots, i_d\in \N\cup\{0\},\quad x=(x_1,\ldots ,x_d)\in \R^d.
\]
A polynomial $P$ on $\R^d$ is a sum 
\[
P=\sum_{k=0}^{n}P_k, \quad n\in  \N\cup\{0\},
\]
where for each $0\le k\le n$, $P_k$ is a homogeneous polynomial of degree $k$ in $\R^d$ or zero. If $P_n\not \equiv 0$ ($P_n$ is not the zero function), we say that $P$ has degree $n$. In that case, we call $\sum_{k=0}^{n}P_k$ its homogeneous decomposition, $P_k$ its homogeneous part of order $k\in\{0,\ldots,n\}$. A homogeneous decomposition is unique. Since we will not introduce more structure, we can also define the degree of the zero polynomial to be zero. 
A polynomial vector field $\mathbf{P}$ in $\R^d$ is a sum 
\[
\mathbf{P}=\sum_{k=0}^n\mathbf{P}_k
\]
where for each $0\le k\le n$, $\mathbf{P}_k=(P_{k,1},P_{k,2}, \ldots, P_{k,d})$ is a $d$-tuple of entries which are either a homogeneous polynomial of degree $k$ or zero. If $P_{n,j}\not \equiv 0$ for some $j\in \{1,\ldots,d\}$, we say that $\mathbf{P}$ has degree $n$.
\begin{theorem}\label{thm:polydifdrift}
Let $\Phi$ be a non-constant polynomial on $\R^d$ of even degree $m\ge 2$ and homogeneous decomposition $\Phi(x)=\sum_{k=0}^m \Phi_k(x)$. Let $\mu=e^{2 \Phi}\,dx$. Suppose that there exist constants $c\in\mathbb{R}$ and $\alpha>d/2$ such that the estimate
\begin{eqnarray}\label{prop:eq0}
 \Phi(x)\le c-\alpha\log\|x\|
\end{eqnarray}
holds outside some arbitrarily large compact set. Choose a constant $C>0$, such that $V:=-\Phi+C\ge 1$. Let $A=(a_{ij})_{1\le i,j\le d}$ be a symmetric and positive definite matrix of real numbers, and $\mathbf{P}$ a polynomial vector field in $\R^d$ of degree $n\ge 0$, such that 
\begin{eqnarray}\label{prop:eq1}
\langle \nabla\Phi, \mathbf{P}-A\nabla\Phi \rangle+\frac12\mathrm{div}(\mathbf{P}-A\nabla\Phi)=0.
\end{eqnarray}
Let $(P_t)_{t>0}$ be the regularized semigroup associated with the $L^1$-closed extension of $(L^{A,\mathbf{P}}, C_0^{\infty}(\mathbb{R}^d))$ (see \cite{LST22}), and suppose that there exist constants $N_0\in \N$ and $M>0$ such that
\begin{eqnarray}\label{prop:eq2}
L^{A,\mathbf{P}}V \leq M V \quad \text{on } \mathbb{R}^d \setminus \overline{B}_{N_0}.
\end{eqnarray}
Then $(P_t)_{t>0}$ is conservative (cf. \cite[Lemma 3.26]{LST22}) and $\mu$ is the unique finite invariant measure for $(P_t)_{t>0}$\ and the unique finite infinitesimally invariant measure for $(L^{A,\mathbf{P}}, C_0^{\infty}(\mathbb{R}^d))$ as well as a finite infinitesimally invariant measure for $(L^{\widetilde{\mathbf{P}}}, C_0^{\infty}(\mathbb{R}^d))$. If, in addition, the regularized semigroup associated with the
$L^1$-closed extension of $(L^{\widetilde{\mathbf P}},C^\infty_0(\mathbb R^d))$
is conservative, then $\mu$ is also the unique finite invariant
measure for this semigroup.
Moreover, if $\mathrm{div}(\mathbf{P}-A\nabla\Phi)=0$, then 
\[
\widetilde{\mathbf{P}}:=\nabla \Phi +\mathbf{B}, \qquad \mathbf{B}:=\mathbf{P}-A\nabla\Phi
\]
is the unique SOHHD for $\widetilde{\mathbf{P}}$.
\end{theorem}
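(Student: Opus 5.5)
The plan is to verify the hypotheses of Theorem \ref{propstrohelm} and of the invariant-measure uniqueness results from \cite{LST22,LT22} that were already used in Proposition \ref{propintegrep}. Put $\mathbf{B}:=\mathbf{P}-A\nabla\Phi$. Since $\Phi$ is a polynomial, $\Phi\in C^\infty$ and $\mathbf{B}$ is a smooth polynomial vector field. Condition \eqref{prop:eq1} is precisely $\langle\nabla\Phi,\mathbf{B}\rangle+\frac12\mathrm{div}\mathbf{B}=0$, so Theorem \ref{propstrohelm}(ii), in its weakly differentiable form, gives $\mathrm{div}_\mu(\mathbf{B})=0$. Theorem \ref{propstrohelm}(i), applied to $\mathbf{P}=A\nabla\Phi+\mathbf{B}$, then shows that $\mu$ is infinitesimally invariant for $(L^{A,\mathbf{P}},C_0^\infty(\R^d))$. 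Applying the same theorem with $A=id$ to $\widetilde{\mathbf{P}}=\nabla\Phi+\mathbf{B}$ shows that $\mu$ is also infinitesimally invariant for $(L^{\widetilde{\mathbf{P}}},C_0^\infty(\R^d))$.

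Finiteness of $\mu$ follows from \eqref{prop:eq0}. Outside a large compact set $e^{2\Phi}\le e^{2c}\|x\|^{-2\alpha}$ with $2\alpha>d$, which is integrable at infinity in $\R^d$, and $e^{2\Phi}$ is locally bounded. This also uses that $\Phi\to-\infty$, so that $V=-\Phi+C\ge1$ for a suitable $C$. Because $\Phi$ is polynomial and $e^{2\Phi}$ is integrable, $L^{A,\mathbf{P}}f\in L^1(\mu)$ holds trivially for $f\in C_0^\infty$.

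Conservativeness of $(P_t)_{t>0}$ comes from the Lyapunov-type criterion \cite[Lemma 3.26]{LST22} (or \cite[Corollary 3.27]{LST22}). The main point to check is that $V\in C^2$, $V\ge1$, and $V\to\infty$ as $\|x\|\to\infty$. The last property follows from \eqref{prop:eq0}, since $-\Phi\ge\alpha\log\|x\|-c$. Together with \eqref{prop:eq2}, these are exactly the hypotheses of that lemma. Once conservativeness holds and $\mu$ is a finite infinitesimally invariant measure, the argument of Proposition \ref{propintegrep}(ii) and of Example \ref{counterexamplestrong} applies: by \cite[Lemma 3.18]{LT22} (equivalently \cite[Proposition 3.13, Theorems 3.15, 3.17]{LT22}), $\mu$ is the unique infinitesimally invariant measure up to constants and the unique finite invariant measure for $(P_t)$. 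For $\widetilde{\mathbf{P}}$ the same argument applies verbatim under the additional assumption that the corresponding semigroup is conservative, because $\mu$ is finite and infinitesimally invariant for it.

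For the last claim, assume $\mathrm{div}\mathbf{B}=0$. Then \eqref{prop:eq1} gives $\langle\nabla\Phi,\mathbf{B}\rangle=0$ pointwise. All objects are $C^1$, so $\widetilde{\mathbf{P}}=\nabla\Phi+\mathbf{B}$ is an SOHHD. For uniqueness I would invoke Theorem \ref{nonexofOHHD}(ii), which requires uniqueness of the infinitesimally invariant measure for $L^{\widetilde{\mathbf{P}}}$. This is the step I expect to be the main obstacle, because conservativeness of the $\widetilde{\mathbf{P}}$-semigroup is not assumed in this last sentence. I would first try to check it directly: $L^{\widetilde{\mathbf{P}}}V=-\frac12\Delta\Phi-\|\nabla\Phi\|^2-\langle\mathbf{B},\nabla\Phi\rangle=-\frac12\Delta\Phi-\|\nabla\Phi\|^2$, using orthogonality. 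Since $\Phi$ is a polynomial, $-\|\nabla\Phi\|^2$ dominates, or at least $\Delta\Phi$ is polynomially bounded by $V$-multiples up to constants. In the worst case one has $|\Delta\Phi|\le M'V$ for large $\|x\|$, because $\deg\Delta\Phi\le m-2$ while $|\Phi|$ grows at least logarithmically. This may need care, and if it fails I would fall back on the $\log$-growth estimate to bound $\Delta\Phi$ against $\|\nabla\Phi\|^2+V$. With $L^{\widetilde{\mathbf{P}}}V\le M'V$ established, \cite[Lemma 3.26]{LST22} gives conservativeness, hence uniqueness of $\mu$, and Theorem \ref{nonexofOHHD}(ii) yields the uniqueness of the SOHHD.
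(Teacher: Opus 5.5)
Up to and including the SOHHD property, your argument is the paper's proof:
\begin{itemize}
\item finiteness of $\mu$ from \eqref{prop:eq0};
\item infinitesimal invariance for $L^{A,\mathbf{P}}$ and $L^{\widetilde{\mathbf P}}$ from Theorem \ref{propstrohelm};
\item conservativeness of $(P_t)_{t>0}$ from \eqref{prop:eq2};
\item uniqueness from \cite[Proposition 3.13, Theorem 3.15]{LT22};
\item the SOHHD property from \eqref{prop:eq1} together with $\mathrm{div}\,\mathbf{B}=0$.
\end{itemize}
For the uniqueness of the SOHHD the paper simply cites Theorem \ref{nonexofOHHD}(ii). You correctly note that this needs uniqueness of the infinitesimally invariant measure for $L^{\widetilde{\mathbf P}}$. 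In the paper's proof, the only source for that uniqueness is the conservativeness of the $\widetilde{\mathbf P}$-semigroup assumed in the preceding sentence of the statement, which Proposition \ref{prop:lyap-criterion} provides in applications.

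The patch you propose to avoid that assumption does not work. The claim $|\Delta\Phi|\le M'V$ for large $\|x\|$ fails for $m\ge 4$: a logarithmic lower bound on $V$ cannot control a polynomial of degree $m-2$, and $\|\nabla\Phi\|^2$ need not help either. Take $d=2$ and $\Phi(x,y)=-x^2-y^2-x^{2k}y^2$ with $k\ge 2$; it satisfies \eqref{prop:eq0}. Take $\mathbf{P}=A\nabla\Phi$, so $\mathbf{B}=0$ and $\widetilde{\mathbf P}=\nabla\Phi$. On the line $\{y=0\}$ one has $\|\nabla\Phi\|^2=4x^2$, $V=x^2+C$, and
\[
L^{\widetilde{\mathbf P}}V=-\tfrac12\Delta\Phi-\|\nabla\Phi\|^2=2+x^{2k}-4x^2 .
\]
So $L^{\widetilde{\mathbf P}}V\le M'V$ fails outside every ball, even though this semigroup is conservative (it is symmetric with respect to the finite measure $\mu$). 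Here \eqref{prop:eq2} fails too, so this is a counterexample to your bound, not to the theorem. Your fallback of bounding $\Delta\Phi$ by $\|\nabla\Phi\|^2+V$ fails on the same example. Nor can \eqref{prop:eq2} be transferred directly to $L^{\widetilde{\mathbf P}}$ when $A\neq id$, since $\mathrm{trace}(A\nabla^2\Phi)$ and $\Delta\Phi$ are not comparable for indefinite $\nabla^2\Phi$.

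You have two ways to close the step. One is to keep conservativeness of the $\widetilde{\mathbf P}$-semigroup as a hypothesis of the last claim, as the paper effectively does. The other is to obtain uniqueness for $L^{\widetilde{\mathbf P}}$ without $V$, for instance from the recurrence criterion of Remark \ref{rem:SOHHD_exuniq}. If $\int|\langle\mathbf{B}(x),x\rangle|\,\mu(dx)<\infty$ (which holds, for example, if \eqref{prop:eq0} holds with $2\alpha>d+1+\deg\mathbf{B}$), then $v$ is bounded, condition (b) there holds, and Theorem \ref{nonexofOHHD}(ii) applies.
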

\begin{proof} By the assumption \eqref{prop:eq0} on $\Phi$, there exists $R>0$ such that for all $x$ with $\|x\|\ge R$
\[
 e^{2\Phi(x)}
 \le e^{2c}\|x\|^{-2\alpha}\in   L^1(\mathbb{R}^d\setminus B_{R}(0)).
\]
Therefore $\mu$ is finite.
By Theorem \ref{propstrohelm}, the condition \eqref{prop:eq1} implies that $\mu$ is infinitesimally invariant for $(L^{A,\mathbf{P}}, C_0^{\infty}(\mathbb{R}^d))$ and $(L^{\widetilde{\mathbf{P}}}, C_0^{\infty}(\mathbb{R}^d))$. Since $\mu$ is finite and conservativeness holds by \eqref{prop:eq2}, or conservativeness of the regularized semigroup associated with the
$L^1$-closed extension of $(L^{\widetilde{\mathbf P}},C^\infty_0(\mathbb R^d))$ is assumed, the remaining assertions about $\mu$ follow by \cite[Proposition 3.13 and Theorem 3.15]{LT22}.
Finally, that $\widetilde{\mathbf{P}}:=\nabla \Phi +\mathbf{B}$ is an SOHHD under the assumption $\mathrm{div}(\mathbf{P}-A\nabla\Phi)=0$ follows from \eqref{prop:eq1} and the remark at the end of Definition \ref{helmdefn}, and the uniqueness of the SOHHD follows from Theorem \ref{nonexofOHHD}(ii).
\end{proof}

\begin{remark}\label{rem:polydifdrift}
In the situation of Theorem \ref{thm:polydifdrift}, it holds:\\
(i) For $V=-\Phi+C$, we have $\nabla V=-\nabla\Phi$ and $\nabla^2 V=-\nabla^2\Phi$. Using in particular \eqref{prop:eq1}, we obtain
\begin{eqnarray*}
L^{A,\mathbf{P}}V &=& \frac12\mathrm{trace}(A\nabla^2V)+\langle A\nabla\Phi+\mathbf{P}-A\nabla\Phi,\nabla V\rangle\\
&=& -\frac12\mathrm{trace}(A\nabla^2\Phi)-\langle  A\nabla\Phi, \nabla\Phi\rangle -\langle  \nabla\Phi, \mathbf{P}-A\nabla\Phi\rangle\\
&=& -\frac12\mathrm{trace}(A\nabla^2\Phi)-\langle  A\nabla\Phi, \nabla\Phi\rangle+\frac12\mathrm{div}(\mathbf{P}-A\nabla\Phi).
\end{eqnarray*}
Therefore, the non-explosion condition \eqref{prop:eq2} is exactly the same as in the reversible case, if $\widetilde{\mathbf{P}}:=\nabla \Phi+\mathbf{B}$, with $\mathbf{B}:=\mathbf{P}-A\nabla\Phi$, is a SOHHD, and in case $\widetilde{\mathbf{P}}:=\nabla \Phi+\mathbf{B}$ is not an OHHD, one can observe that $\frac12\mathrm{div}(\mathbf{P}-A\nabla\Phi)$ has one degree less than $\mathbf{B}$.\\
(ii) Let $\mathbf{B}:=\mathbf{P}-A\nabla\Phi$ and $\mathrm{deg}(\mathbf{B})=q=\max(n,m-1)$, and express both, $\Phi$ and $\mathbf{B}$, in their homogeneous decompositions
\[
\Phi=\sum_{k=0}^m \Phi_k,\qquad \mathbf{B}=\sum_{\ell=0}^q \mathbf{B}_{\ell},
\]
where $\Phi_k$ is homogeneous polynomial of degree $k$ and $\mathbf{B}_{\ell}$ is a homogeneous vector field of degree $\ell$. In particular $\Phi_0$ is constant and $\mathbf{B}_0$ is a constant vector field. Then, collecting the respective degrees, \eqref{prop:eq1} holds, if and only if for each homogeneous degree $s=0,1,\dots,m+q-1$,
\begin{equation*}
\sum_{k=1}^{\min(m,s+1)} \langle \nabla\Phi_k,  \mathbf{B}_{s+1-k}\rangle\ +\ \frac12\mathrm{div}(\mathbf{B}_{s+1})=0,
\end{equation*}
with the convention $\mathbf{B}_j\equiv 0$ for $j\notin\{0,1,\dots,q\}$.\\
(The term $\langle \nabla\Phi_k,\mathbf{B}_\ell\rangle$ is homogeneous of degree $(k-1)+\ell=k+\ell-1$ or zero, while $\mathrm{div}(\mathbf{B}_\ell)$ is homogeneous of degree $\ell-1$ or zero (for $\ell\ge 1$)). The converse is immediate by summing all homogeneous degrees.
\end{remark}

\begin{proposition}\label{prop:lyap-criterion}
Let $\Phi$, $A$, $\mathbf{P}$, $\mu$, $\mathbf{B}$, and $V$ be as in Theorem \ref{thm:polydifdrift} and let all conditions of Theorem \ref{thm:polydifdrift} except \eqref{prop:eq0} and \eqref{prop:eq2} be satisfied. Then a sufficient condition for \eqref{prop:eq0} and \eqref{prop:eq2} to hold is given by
\begin{itemize}
\item[(i)] $\mathrm{deg} (\mathbf{B})\le 2m-2$, where $m=\mathrm{deg}(\Phi)$, and
\item[(ii)] $-\Phi_m(x)>0$ for every $x\in S^{d-1}:=\{x\in \R^d\,| \, \|x\|=1\}$.
\end{itemize}
In particular, Theorem \ref{thm:polydifdrift} applies and in addition, the regularized semigroup associated with the
$L^1$-closed extension of $(L^{\widetilde{\mathbf P}},C^\infty_0(\mathbb R^d))$
is conservative.
\end{proposition}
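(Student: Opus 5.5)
The plan is to verify the two analytic conditions \eqref{prop:eq0} and \eqref{prop:eq2} directly from the homogeneous structure of $\Phi$, using only the sign condition (ii) on the top-order part $\Phi_m$ and the degree bound (i) on $\mathbf{B}$. Then I apply the same computation to $\widetilde{\mathbf P}$ to get its conservativeness. Throughout let $c_0:=\min_{x\in S^{d-1}}(-\Phi_m(x))$, which is strictly positive by (ii) and compactness of $S^{d-1}$. Let $\lambda>0$ denote the smallest eigenvalue of $A$.

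\textbf{Step 1: condition \eqref{prop:eq0}.} By homogeneity, $\Phi_m(x)=\|x\|^m\Phi_m(x/\|x\|)\le -c_0\|x\|^m$. The lower-order parts satisfy $|\sum_{k<m}\Phi_k(x)|\le C_1(1+\|x\|^{m-1})$. Hence $\Phi(x)\le -\frac{c_0}{2}\|x\|^m$ for $\|x\|$ large. Since $m\ge 2$, this is eventually below $c-\alpha\log\|x\|$ for any $\alpha>d/2$ and, say, $c=0$. The same bound shows $V=-\Phi+C\ge \frac{c_0}{2}\|x\|^m$ for large $\|x\|$.

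\textbf{Step 2: condition \eqref{prop:eq2}.} By Remark \ref{rem:polydifdrift}(i), which uses \eqref{prop:eq1},
\[
L^{A,\mathbf P}V=-\tfrac12\mathrm{trace}(A\nabla^2\Phi)-\langle A\nabla\Phi,\nabla\Phi\rangle+\tfrac12\mathrm{div}\,\mathbf B .
\]
The first term is a polynomial of degree at most $m-2$. By (i), $\mathrm{div}\,\mathbf B$ has degree at most $2m-3$. The key step is a lower bound on $\|\nabla\Phi\|$. Euler's identity gives $\langle x,\nabla\Phi_m(x)\rangle=m\Phi_m(x)\le -mc_0\|x\|^m$, so Cauchy--Schwarz yields $\|\nabla\Phi_m(x)\|\ge mc_0\|x\|^{m-1}$. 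Since $\|\nabla(\Phi-\Phi_m)(x)\|=O(\|x\|^{m-2})$, for large $\|x\|$ we get $\|\nabla\Phi(x)\|\ge \frac{mc_0}{2}\|x\|^{m-1}$. Therefore
\[
-\langle A\nabla\Phi,\nabla\Phi\rangle\le -\lambda\tfrac{m^2c_0^2}{4}\|x\|^{2m-2}.
\]
This dominates the remaining terms, which are $O(1+\|x\|^{2m-3})$. Thus $L^{A,\mathbf P}V\to-\infty$, and in particular $L^{A,\mathbf P}V\le 0\le MV$ outside some $\overline B_{N_0}$, for any $M>0$. This is the main obstacle. The naive comparison of $\mathrm{div}\,\mathbf B$ with $V$ only works for $m=2$, so the negative quadratic term in $\nabla\Phi$, controlled via Euler's identity, is essential; the degree bound (i) is exactly what makes it dominate.

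\textbf{Step 3: conservativeness for $\widetilde{\mathbf P}$.} Note that $\widetilde{\mathbf P}=\nabla\Phi+\mathbf B=\mathrm{id}\,\nabla\Phi+\mathbf B$. So $\widetilde{\mathbf P}$ is a polynomial vector field satisfying \eqref{prop:eq1} with $A$ replaced by $\mathrm{id}$ and the same $\mathbf B$, and $\deg\mathbf B\le 2m-2$. Repeating Steps 1--2 with $A=\mathrm{id}$ (so $\lambda=1$) gives
\[
L^{\widetilde{\mathbf P}}V=-\tfrac12\Delta\Phi-\|\nabla\Phi\|^2+\tfrac12\mathrm{div}\,\mathbf B\le MV
\]
outside a large ball. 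By \cite[Lemma 3.26]{LST22}, as used in Theorem \ref{thm:polydifdrift}, the regularized semigroup of $(L^{\widetilde{\mathbf P}},C_0^\infty(\R^d))$ is conservative. All hypotheses of Theorem \ref{thm:polydifdrift}, including its additional conservativeness assumption, then hold, and the theorem applies.
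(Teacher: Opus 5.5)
Your proposal is correct and follows essentially the same route as the paper: absorb lower-order terms into $\Phi_m$ to get \eqref{prop:eq0}, then use Remark \ref{rem:polydifdrift}(i) together with the $\|x\|^{2m-2}$ growth of $\langle A\nabla\Phi,\nabla\Phi\rangle$ to dominate the terms of degree at most $2m-3$, and repeat the argument with $A=id$ for $\widetilde{\mathbf P}$. The only difference is cosmetic. You get the quantitative bound $\|\nabla\Phi_m(x)\|\ge mc_0\|x\|^{m-1}$ from Euler's identity and Cauchy--Schwarz and then use the smallest eigenvalue of $A$, whereas the paper uses Euler's identity only to show $\nabla\Phi_m\neq 0$ on $S^{d-1}$ and then appeals to compactness of $S^{d-1}$ and continuity.
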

\begin{proof}
Since  \(S^{d-1}\) is compact and \(\Phi_m\) is continuous, it follows from (ii) that
\[
        \Phi_m(y)\le  -\min_{x\in S^{d-1}}(-\Phi_m(x))=:-\tilde{c}_1<0
        \qquad \text{for every } y\in S^{d-1}.
\]
Thus by homogeneity of \(\Phi_m\),
\[
        \Phi_m(x)=\|x\|^m\Phi_m(\frac{x}{\|x\|}) \le  -\tilde{c}_1\, \|x\|^m\qquad \text{for every } x\not= 0. 
\]
Since $\sum_{k=0}^{m-1}\Phi_k$ is at most of degree $m-1$, it holds $|\sum_{k=0}^{m-1}\Phi_k(x)|\le \tilde{c}_2(1+\|x\|^{m-1})$, which can be absorbed by $\Phi_m$. Thus by continuity of $\Phi_m$ in zero, it follows for some $\tilde{c}_3, r>0$
\[
        \Phi(x)\le -\tilde{c}_3\, \|x\|^m
        \qquad \text{whenever }\ \|x\|\ge r.
\]
This clearly implies \eqref{prop:eq0}.\\
By Remark \ref{rem:polydifdrift}(i),
\[
L^{A,\mathbf{P}}V = -\frac12\mathrm{trace}(A\nabla^2\Phi)-\langle  A\nabla\Phi, \nabla\Phi\rangle+\frac12\mathrm{div}(\mathbf{B}).
\]
Consider the leading homogeneous part $\Phi_m$ of $\Phi$. By $(ii)$ and Euler's homogeneous function theorem, we have for any $x\in S^{d-1}$ that $\langle x, \nabla\Phi_m(x)\rangle =m\Phi_m(x)<0$. Thus $\nabla\Phi_m(x)\neq 0$ for all $x\in S^{d-1}$. By compactness of $S^{d-1}$, continuity of $\nabla \Phi_m$, and positive definiteness of $A$, there is $c_0>0$ such that
\[
\langle A\nabla\Phi_m(x), \nabla\Phi_m(x)\rangle\ge c_0\qquad \forall x\in S^{d-1}.
\]
Since $\nabla\Phi_m$ is homogeneous of order $m-1$ and $x\mapsto \langle A\nabla\Phi_m(x), \nabla\Phi_m(x)\rangle$ is continuous in zero, we get
\[
\langle A\nabla\Phi_m(x), \nabla\Phi_m(x)\rangle\ge c_0\|x\|^{2m-2}\qquad \forall x\in \R^{d},
\]
and $|\nabla\Phi_m(x)|\le \tilde{c}_1\|x\|^{m-1}$.  Since $\nabla\Phi-\nabla\Phi_m$ has degree at most $m-2$, we get $|(\nabla\Phi-\nabla\Phi_m)(x)|\le \tilde{c}_2(1+\|x\|^{m-2})$. Hence for suitable $c_1,c_2>0$ and all $x\in \R^d$
\begin{eqnarray*}
\langle A\nabla\Phi(x), \nabla\Phi(x)\rangle&\ge &c_0\|x\|^{2m-2}+ 2\langle A(\nabla\Phi-\nabla\Phi_m)(x), \nabla\Phi_m(x)\rangle \\
&\ge&  c_0\|x\|^{2m-2}- 2\|A\| \tilde{c}_2(1+\|x\|^{m-2})\tilde{c}_1\|x\|^{m-1}\\
&\ge &  c_1\|x\|^{2m-2}-c_2.
\end{eqnarray*}
Moreover,
\[
\mathrm{deg}\big (\mathrm{trace}(A\nabla^2\Phi)\big )\le m-2,
\qquad
\mathrm{deg}\big (\mathrm{div}(\mathbf{B})\big )\le \mathrm{deg} (\mathbf{B})-1\le 2m-3,
\]
hence for all $x\in \R^d$
\[
 -\frac12\mathrm{trace}(A\nabla^2\Phi(x))+\frac12\mathrm{div}(\mathbf{B})(x)\le c_3(1+\|x\|^{2m-3}).
\]
Therefore, for $x$ sufficiently large
\[
L^{A,\mathbf{P}}V(x)\le -\frac{c_1}{2}\|x\|^{2m-2}\le V(x)
\]
and \eqref{prop:eq2} holds. Finally, exactly in the same way, we can show that outside some compact set $L^{\widetilde{\mathbf P}}V\le V$ and therefore the stated conservativeness holds. 
\end{proof}

\begin{remark}
Assumption (ii) in Proposition \ref{prop:lyap-criterion} is a sufficient condition on the leading homogeneous part of $\Phi$, not a consequence of $-\Phi(x)\to +\infty$ as $\|x\|\to +\infty$. For instance, $\Phi(x,y)=-(x^2+y^4)$ satisfies $-\Phi(x)\to +\infty$ as $\|x\|\to +\infty$, but $\Phi_4(x,y)=-y^4$ vanishes on the two points $(\pm 1,0)$ of $S^1$. In this situation, the Lyapunov estimate may still work, but one would rather need a direction dependent argument instead of the leading homogeneous part condition Proposition \ref{prop:lyap-criterion}(ii).
\end{remark}

\begin{example}
Let $d=2$ and $\Phi(x,y)=-x^{10}-y^{10}+p(x,y)$, where $p(x,y)$ is a polynomial with $\deg(p)\leq 9$. Thus condition (ii) of Proposition \ref{prop:lyap-criterion} is satisfied.
Let $A=id$, and let $q(x,y)$ be a polynomial with $\deg(q)\le 9$.
For each $(x,y)\in\mathbb{R}^d$, define
$$
C(x,y)=\begin{pmatrix}0 & q(x,y)\\ -q(x,y)&0 \end{pmatrix},
\qquad
\mathbf{P}(x,y)=\nabla\Phi(x,y)+C(x,y)^{T}\nabla\Phi(x,y)+\frac12\,\mathrm{div}\,C(x,y).
$$
Then, $\mathbf{B}=\mathbf{P}-\nabla \Phi=C(x,y)^{T}\nabla\Phi(x,y)+\frac12\,\mathrm{div}\,C(x,y)$, and hence ${\rm deg}(\mathbf{B})\leq 18$. Thus condition (i) of Proposition \ref{prop:lyap-criterion} is satisfied. Moreover, by an explicit direct calculation (using Proposition \ref{suffnececondOHHD}), \eqref{prop:eq1} holds. Thus, by Proposition \ref{prop:lyap-criterion}, the regularized semigroup $(P_t)_{t>0}$ associated with the $L^{1}$-closed extension of $(L^{A,\mathbf{P}},C_0^{\infty}(\mathbb{R}^d))$, where
$$
L^{\mathbf{P}}f=\frac12\Delta f+\langle \mathbf{P},\nabla f\rangle,
\qquad f\in C_0^{\infty}(\mathbb{R}^d),
$$
is conservative (even recurrent), and $\mu:=e^{2\Phi}\,dx$ is finite and the unique (infinitesimally) invariant measure.
\end{example}

\subsection{Quadratic vector fields in $\R^2$ (and short outlook to arbitrary orders)}
In this subsection, we will study explicitly a special case of Section \ref{sec:3.1} with $m=d=2$, and mainly $n=2$. Consider a polynomial vector field $\mathbf{P}$ of order (or also called degree) $n\ge 0$ in $\R^2$, i.e.
\[
\mathbf{P}=\sum_{k=0}^n\mathbf{P}_k
\]
where each $\mathbf{P}_k=(P_{k,1},P_{k,2})$ is a pair of homogeneous polynomials of degree $k$ or zero,
\[
P_{k,1}(z)=\sum_{j=0}^{k}\alpha_{k,j}x^{k-j}y^{j},\qquad P_{k,2}(z)=\sum_{j=0}^{k}\beta_{k,j}x^{k-j}y^{j}, \quad z=(x,y)\in \R^2, \ 0\le k\le n,
\]
with $\alpha_{k,j}, \beta_{k,j}\in \R$, and $P_{n,1}\not \equiv 0$ or $P_{n,2}\not \equiv 0$. Since $\mathbf{P}_1$ is linear, we will use our previous notation for linear vector fields, i.e. for arbitrary $a,b,c,d\in \R$
\[
\mathbf{P}_1(z)=Gz=\begin{pmatrix}a&b\\ c&d\end{pmatrix}\begin{pmatrix}x\\ y \end{pmatrix}=(ax+by,cx+dy), \qquad z=(x,y)\in \R^2.
\] 
Let $A$ be an arbitrary symmetric and positive definite $2\times 2$ matrix. According to Proposition \ref{linearSOHHD}, we can choose a (subsequently fixed) symmetric matrix 
\[
S=\begin{pmatrix}s_{11}& s_{12}\\ s_{12}& s_{22}\end{pmatrix},
\]
such that
$$
\tilde Gz=Sz+(G-AS)z, \quad z=(x,y)\in \R^2
$$ 
is a SOHHD and 
\begin{eqnarray}\label{2diminv}
\mu=e^{2\Phi(x,y)}dxdy,\qquad \Phi(x,y)=\frac{s_{11}x^2+2s_{12}xy+s_{22}y^2}{2},
\end{eqnarray}
is an infinitesimally invariant measure for $\big (\frac{1}{2} \mathrm{trace}(A\nabla^2)+ \langle \mathbf{P}_1, \nabla \cdot \rangle, C_0^{\infty}(\R^2)\big )$. Our goal is to find conditions under which $\mu$ is also an infinitesimally invariant measure for $\big (\frac{1}{2} \mathrm{trace}(A\nabla^2)+ \langle \mathbf{P}, \nabla \cdot \rangle, C_0^{\infty}(\R^2)\big )$ for arbitrary $n\in \N$. We first consider $n=2$. Setting
\begin{eqnarray*}\label{defHdim2}
\mathbf{H}:=\mathbf{P}_0+\mathbf{P}_2= (\alpha_{0,0}+\sum_{j=0}^{2}\alpha_{2,j}x^{2-j}y^{j},\beta_{0,0}+\sum_{j=0}^{2}\beta_{2,j}x^{2-j}y^{j})
\end{eqnarray*}
we have for $z=(x,y)\in \R^2$, 
\begin{eqnarray*}\label{linplusquadplusconst}
\mathbf{P}(z)
& =& G z+\mathbf{H}(z)\nonumber \\
&=& (ax+by,cx+dy)+(\alpha_{0,0}+\alpha_{2,0}x^{2}+\alpha_{2,1}xy+\alpha_{2,2}y^{2},\beta_{0,0}+\beta_{2,0}x^{2}+\beta_{2,1}xy+\beta_{2,2}y^{2}).\nonumber 
\end{eqnarray*}
For $\mu$ to be an infinitesimally invariant measure for 
$$
\Big (\frac{1}{2} \mathrm{trace}(A\nabla^2) + \langle \mathbf{P}, \nabla \cdot \rangle, C_0^{\infty}(\R^2)\Big ),
$$
we need to check under which conditions on $\alpha_{0,0}, \alpha_{2,0}, \alpha_{2,1}, \alpha_{2,2}, \beta_{0,0}, \beta_{2,0}, \beta_{2,1}, \beta_{2,2}$, we have that $\mathrm{div}_{\mu}(\mathbf{H})=0$, where $\mu$ is as in \eqref{2diminv} (cf. Theorem \ref{propstrohelm}(i)). By Theorem \ref{propstrohelm}(ii) the latter is equivalent to
\begin{eqnarray}\label{invariance P_2}
\langle \nabla \Phi, \mathbf{H} \rangle +\frac 12\mathrm{div} \mathbf{H}= 0,
\end{eqnarray}
which further holds, if and only if
\begin{eqnarray*}\label{linplusquadplusconst1}
&&(s_{11}x+s_{12} y, s_{12} x +s_{22}y)(\alpha_{0,0}+\alpha_{2,0}x^{2}+\alpha_{2,1}xy+\alpha_{2,2}y^{2},\beta_{0,0}+\beta_{2,0}x^{2}+\beta_{2,1}xy+\beta_{2,2}y^{2})\nonumber \\
&& \quad + \frac12(2\alpha_{2,0}x+\alpha_{2,1}y+2\beta_{2,2} y+\beta_{2,1} x)\nonumber \\
&&= (s_{11}\alpha_{2,0}+s_{12}\beta_{2,0})x^3+(s_{12}\alpha_{2,2}+s_{22}\beta_{2,2}) y^3+(s_{11} \alpha_{2,2} +s_{12}\alpha_{2,1}+s_{12}\beta_{2,2}+s_{22}\beta_{2,1} )xy^2\nonumber\\
&&\quad + (s_{11} \alpha_{2,1}+s_{12}\alpha_{2,0}+s_{12}\beta_{2,1}+s_{22}\beta_{2,0})x^2y +(s_{11}\alpha_{0,0}+s_{12}\beta_{0,0}+\alpha_{2,0}+\frac{\beta_{2,1}}{2})x+\nonumber\\
&& \quad  +(s_{12}\alpha_{0,0}+s_{22}\beta_{0,0}+\beta_{2,2} +\frac{\alpha_{2,1}}{2})y.
\end{eqnarray*}
Therefore, we obtain the following system of equations
\begin{equation}\label{linplusquadplusconst2}
\begin{cases}
\ s_{11}\alpha_{2,0}+s_{12}\beta_{2,0}=0, \\[6pt]
\ s_{12}\alpha_{2,2}+s_{22}\beta_{2,2}=0, \\[6pt]
\  s_{11}\alpha_{2,2}+s_{12}\alpha_{2,1}+s_{12}\beta_{2,2}+s_{22}\beta_{2,1} =0, \\[6pt]
\ s_{11} \alpha_{2,1}+s_{12}\alpha_{2,0}+s_{12}\beta_{2,1}+s_{22}\beta_{2,0} =0, \\[6pt]
\  s_{11}\alpha_{0,0}+s_{12}\beta_{0,0}+\alpha_{2,0}+\frac{\beta_{2,1}}{2}=0, \\[6pt]
\  s_{12}\alpha_{0,0}+s_{22}\beta_{0,0}+\beta_{2,2} +\frac{\alpha_{2,1}}{2}=0.
\end{cases}
\end{equation}
Note that
\begin{eqnarray}\label{div zero d=2}
\frac 12\mathrm{div} \mathbf{H}(x)= (\alpha_{2,0}+\frac{\beta_{2,1}}{2})x+(\beta_{2,2} +\frac{\alpha_{2,1}}{2})y= 0\ \Longleftrightarrow \ \alpha_{2,0}=-\frac{\beta_{2,1}}{2}\ \text{ and } \ \beta_{2,2} =-\frac{\alpha_{2,1}}{2}.
\end{eqnarray}
In particular, by Theorem \ref{propstrohelm}(ii) the decomposition $\nabla \Phi+\mathbf{H}$ is a SOHHD, if and only if \eqref{linplusquadplusconst2} and \eqref{div zero d=2} hold. 
Thus, we obtain:
\begin{lemma}\label{allinvariantmeasures0}
Let $A, G, S$, $\mu$, be as in Proposition \ref{linearSOHHD}, 
and for $z=(x,y)\in \R^2$
$$
\mathbf{P}(z)=\sum_{k=0}^2\mathbf{P}_k(z)= Gz+\mathbf{H}(z)= (ax+by,cx+dy)+\mathbf{H}(x,y),
$$ 
where
$$
\mathbf{H}(x,y)=(\alpha_{0,0}+\alpha_{2,0}x^{2}+\alpha_{2,1}xy+\alpha_{2,2}y^{2},\beta_{0,0}+\beta_{2,0}x^{2}+\beta_{2,1}xy+\beta_{2,2}y^{2}).
$$
Then, the following holds:
(i) The real scalars $s_{11}, s_{12}, s_{22}, \alpha_{0,0}, \alpha_{2,0}, \alpha_{2,1}, \alpha_{2,2}, \beta_{0,0}, \beta_{2,0}, \beta_{2,1}, \beta_{2,2}$, satisfy \eqref{linplusquadplusconst2}, if and only if  $\mathrm{div}_{\mu}(\mathbf{H})=0$, if and only if \eqref{invariance P_2} holds, if and only if $\mu$ is an infinitesimally invariant measure for 
$$
\big (L^{A,\mathbf{P}}, C_0^{\infty}(\R^2)\big )=\Big (\frac{1}{2} \mathrm{trace}(A\nabla^2) + \big \langle \mathbf{P}, \nabla \cdot \big \rangle, C_0^{\infty}(\R^2)\Big ).
$$ 
Moreover, in any of these cases the vector field decomposition
\begin{eqnarray}\label{OHHD2dim}
\tilde{\mathbf{P}}(z) :=Sz+\mathbf{P}(z)-ASz=\nabla \Phi(x,y)+(G-AS)\begin{pmatrix}x\\ y \end{pmatrix}+\mathbf{H}(x,y), 
\end{eqnarray}
is a SOHHD, if and only if $\alpha_{2,0}=-\frac{\beta_{2,1}}{2}$ and $\beta_{2,2} =-\frac{\alpha_{2,1}}{2}$, and
\[
\tilde Gz=Sz+(G-AS)z, \quad z=(x,y)\in \R^2
\] 
is a SOHHD by assumption.\\
(ii) Suppose that $\mu$ is an infinitesimally invariant measure for $(L^{A,\mathbf{P}}, C_0^{\infty}(\R^2))$. Then, the measure $\mu$ is infinitesimally invariant for $(L, C_0^{\infty}(\R^2))$, where $L$ is either of the following operators
\[
L^{A,G}, L^{S}, L^{\widetilde{G}}, L^{\widetilde{\mathbf{P}}}.
\]
\end{lemma}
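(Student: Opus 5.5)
The proof reduces everything to Theorem \ref{propstrohelm} and the fact, from Proposition \ref{linearSOHHD}, that $\mathrm{div}_\mu((G-AS)z)=0$. The key observation is that $\mathbf{P}-A\nabla\Phi=(G-AS)z+\mathbf{H}$. Together with $\mathrm{div}_\mu((G-AS)z)=0$, linearity of $\mathbf{B}\mapsto\int\langle \mathbf{B},\nabla f\rangle d\mu$ gives
\[
\mathrm{div}_\mu(\mathbf{P}-A\nabla\Phi)=0 \iff \mathrm{div}_\mu(\mathbf{H})=0 .
\]
Applying Theorem \ref{propstrohelm}(i) to $L^{A,\mathbf{P}}$ with $\mathbf{B}=\mathbf{P}-A\nabla\Phi$ shows that infinitesimal invariance of $\mu$ is equivalent to $\mathrm{div}_\mu(\mathbf{H})=0$. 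Since $\mathbf{H}$ is smooth, Theorem \ref{propstrohelm}(ii) makes this equivalent to \eqref{invariance P_2}. The coefficient comparison displayed before the lemma shows that \eqref{invariance P_2} holds iff all monomial coefficients vanish, i.e. iff \eqref{linplusquadplusconst2} holds. This proves the chain of equivalences in (i).

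For the SOHHD claim in (i), I set $\mathbf{B}:=(G-AS)z+\mathbf{H}$, which is $C^1$, with $\Phi$ smooth. By the remark at the end of Definition \ref{helmdefn} together with \eqref{equidivfre2} and \eqref{equidivfreadditi}, and given $\mathrm{div}_\mu(\mathbf{B})=0$, the decomposition is an SOHHD iff $\mathrm{div}\,\mathbf{B}=0$ pointwise. Since $\tilde Gz$ is an SOHHD, $\mathrm{div}((G-AS)z)=\mathrm{trace}(G-AS)=0$ by \eqref{SOHHDlinalg1}. Hence $\mathrm{div}\,\mathbf{B}=\mathrm{div}\,\mathbf{H}$, and \eqref{div zero d=2} gives exactly the stated conditions on $\alpha_{2,0},\beta_{2,1},\beta_{2,2},\alpha_{2,1}$. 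I also check orthogonality: $\langle Sz,(G-AS)z\rangle=0$ by Lemma \ref{lemma algebraic riccati}, and $\langle\nabla\Phi,\mathbf{H}\rangle=-\tfrac12\mathrm{div}\,\mathbf{H}=0$. So pointwise orthogonality follows from the divergence condition, which is consistent with the above.

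For (ii), I treat each operator separately.
\begin{itemize}
\item For $L^{A,G}$, invariance is Proposition \ref{linearSOHHD}, or Lemma \ref{lemma algebraic riccati}.
\item For $L^{S}=\tfrac12\Delta+\langle Sz,\nabla\cdot\rangle$, the drift is $\nabla\Phi$, so $L^S$ is symmetric with respect to $\mu$. Alternatively, apply Theorem \ref{propstrohelm}(i) with $A=id$ and $\mathbf{B}=0$.
\item For $L^{\widetilde G}$, write $\widetilde G z=\nabla\Phi+(G-AS)z$. Then $\mathrm{div}_\mu((G-AS)z)=0$ and Theorem \ref{propstrohelm}(i) with $A=id$ apply.
\item For $L^{\widetilde{\mathbf{P}}}$, write $\widetilde{\mathbf{P}}=\nabla\Phi+(G-AS)z+\mathbf{H}$. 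Both summands are $\mu$-divergence free, by Proposition \ref{linearSOHHD} and by part (i), so Theorem \ref{propstrohelm}(i) with $A=id$ applies.
\end{itemize}

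I expect no real obstacle; this is bookkeeping. The only points requiring care are keeping the roles of $A$ versus $id$ straight when invoking Theorem \ref{propstrohelm}(i) for the different operators, and the trace condition $\mathrm{trace}(G-AS)=0$ used in the SOHHD equivalence.
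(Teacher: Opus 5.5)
Your proposal is correct and follows essentially the same route as the paper. Part (i) is the reduction to $\mathrm{div}_\mu(\mathbf H)=0$ via Theorem \ref{propstrohelm}(i),(ii), followed by the coefficient comparison carried out before the lemma; part (ii) is the same chain $L^{A,G}\to L^{S}\to L^{\widetilde G}\to L^{\widetilde{\mathbf P}}$, using $\mathrm{div}_\mu((G-AS)z)=0$ and $\mathrm{div}_\mu(\mathbf H)=0$. The paper speaks only of $\nabla\Phi+\mathbf H$ in the SOHHD criterion, whereas you explicitly use $\mathrm{trace}(G-AS)=0$ to reduce $\mathrm{div}\big((G-AS)z+\mathbf H\big)$ to $\mathrm{div}\,\mathbf H$, which is a welcome bit of extra care.
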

\begin{proof}
(i) has been derived in the paragraphs preceding the lemma. By assumption and (i), we have that $\mathrm{div}_{\mu}(\mathbf{H})=0$. The statement is true for $L=L^{A,G}$ by construction (Proposition \ref{linearSOHHD}). Obviously $L^S$ is symmetric with respect to $\mu$, so it also holds for $L=L^S$, which further implies it is true for $L=L^{\widetilde{G}}$, since $\tilde Gz=Sz+(G-AS)z$ is a SOHHD, hence $\mathrm{div}_{\mu}(G-AS)=0$. Finally it is also true for $L=L^{\widetilde{\mathbf{P}}}$ since it is true for $L=L^{\widetilde{G}}$ and $\mathrm{div}_{\mu}(\mathbf{H})=0$, as well as $\widetilde{\mathbf{P}}=\widetilde{G}+\mathbf{H}$.
\end{proof}
\begin{proposition}\label{allinvariantmeasures}
Consider the situation of Lemma \ref{allinvariantmeasures0}. The following are equivalent:
\begin{itemize}
\item[(a)] $G=\mathbf{P}_1$ is Hurwitz (so that $S$ is negative definite), and $\mu$ is an infinitesimally invariant measure for $\big (L^{A,\mathbf{P}}, C_0^{\infty}(\R^2)\big )$. 
\item[(b)] $\mu$ is a finite invariant measure for the semigroup associated to the closed extension of $(L^{A,\mathbf{P}},C_0^{\infty}(\mathbb{R}^2))$ and its associated stochastic process, both constructed in \cite{LST22}. 
\end{itemize}
Moreover, in either case (a) or (b), the following hold: 
\begin{itemize}
\item[(i)]
$\mu$ is the (up to a multiplicative constant) the unique finite invariant measure for the semigroup mentioned in (b), and the unique finite infinitesimally invariant measure for  $(L^{A,\mathbf{P}},C_0^{\infty}(\mathbb{R}^2))$.
\item[(ii)]
If $\alpha_{2,0}\not =-\frac{\beta_{2,1}}{2}$ or $\beta_{2,2} \not=-\frac{\alpha_{2,1}}{2}$, then there is no SOHHD for $\tilde{\mathbf{P}}$.
\item[(iii)]
If $\alpha_{2,0}=-\frac{\beta_{2,1}}{2}$ and $\beta_{2,2} =-\frac{\alpha_{2,1}}{2}$ then
\eqref{OHHD2dim} is the unique OHHD for $\tilde{\mathbf{P}}$.
\end{itemize}
\end{proposition}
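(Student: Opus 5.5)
The plan is to reduce everything to results already established: Lemma \ref{allinvariantmeasures0}, Lemma \ref{lemma algebraic riccati}, Theorem \ref{thm:polydifdrift} together with Proposition \ref{prop:lyap-criterion}, and Theorem \ref{nonexofOHHD}. The Hurwitz property enters only through the Lyapunov equation and \cite[Theorem 4.6]{K02}.

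\emph{(a)$\Rightarrow$(b) and (i).} In $d=2$, $G$ Hurwitz means $\mathrm{trace}(G)<0$ and $\det(G)>0$. Hence we are in case (iii) of Theorem \ref{allGOHHD}, and there $S=(-P)^{-1}$, where $P$ solves $GP+PG^T=-2A$. By \cite[Theorem 4.6]{K02}, $P$ is positive definite, so $S$ is negative definite.

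We then apply Theorem \ref{thm:polydifdrift} via Proposition \ref{prop:lyap-criterion}, with $m=2$ and $\Phi=\Phi_2=\frac12\langle z,Sz\rangle$. Condition (ii) of Proposition \ref{prop:lyap-criterion} holds because $S$ is negative definite. The field $\mathbf{B}=\mathbf{P}-AS z=(G-AS)z+\mathbf{H}$ has degree at most $2=2m-2$, so condition (i) holds as well. Condition \eqref{prop:eq1} is exactly $\mathrm{div}_\mu(\mathbf{B})=0$, rewritten pointwise by Theorem \ref{propstrohelm}(ii); this is legitimate since $\mathbf{B}$ is smooth. Moreover $\mathrm{div}_\mu(\mathbf{B})=0$ holds because $\mu$ is infinitesimally invariant for $L^{A,\mathbf{P}}$ (Theorem \ref{propstrohelm}(i)). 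Theorem \ref{thm:polydifdrift} then yields that the semigroup is conservative, $\mu$ is finite, and $\mu$ is the unique finite invariant measure and the unique finite infinitesimally invariant measure. This gives (b) and (i) at once.

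\emph{(b)$\Rightarrow$(a).} First, a (finite) invariant measure for the semigroup of \cite{LST22} is infinitesimally invariant for $(L^{A,\mathbf{P}},C_0^\infty(\R^2))$. This holds because the generator extends $L^{A,\mathbf{P}}$ on $C_0^\infty$, so $\int L^{A,\mathbf{P}}f\,d\mu=\frac{d}{dt}\int P_tf\,d\mu|_{t=0}=0$. Next, finiteness of $\mu=e^{\langle z,Sz\rangle}dz$ forces $S$ to be negative definite. By Lemma \ref{allinvariantmeasures0}(ii), $\mu$ is infinitesimally invariant for $L^{A,G}$, so Lemma \ref{lemma algebraic riccati} gives $SG+G^TS=2SAS$. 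Setting $P:=(-S)^{-1}>0$ and multiplying from the left and from the right by $P$ turns this into $GP+PG^T=-2A$. Since $A$ is positive definite, \cite[Theorem 4.6]{K02} then shows that $G$ is Hurwitz.

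\emph{(ii), (iii).} By Lemma \ref{allinvariantmeasures0}(ii), $\mu$ is infinitesimally invariant for $L^{\widetilde{\mathbf{P}}}$. The last assertion of Proposition \ref{prop:lyap-criterion} gives conservativeness of the corresponding semigroup. Since $\mu$ is finite, \cite[Proposition 3.13 and Theorem 3.15]{LT22} give that $\mu$ is the unique infinitesimally invariant measure for $(L^{\widetilde{\mathbf{P}}},C_0^\infty(\R^2))$, unique among all such measures and not only among finite ones. This uniqueness is the hypothesis of Theorem \ref{nonexofOHHD}, which we therefore may apply with $\widetilde{\mathbf{B}}=(G-AS)z+\mathbf{H}$.

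Since $\mathrm{trace}(G-AS)=0$, we have $\mathrm{div}\,\widetilde{\mathbf{B}}=\mathrm{div}\,\mathbf{H}$. If $\alpha_{2,0}\ne-\beta_{2,1}/2$ or $\beta_{2,2}\ne-\alpha_{2,1}/2$, then $\mathrm{div}\,\mathbf{H}\not\equiv0$ by \eqref{div zero d=2}. Theorem \ref{nonexofOHHD}(iii) then excludes any OHHD, and hence any SOHHD. In the complementary case, Lemma \ref{allinvariantmeasures0}(i) shows that \eqref{OHHD2dim} is an SOHHD, in particular an OHHD, and Theorem \ref{nonexofOHHD}(ii) gives its uniqueness.

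The main obstacle I expect is justifying the two measure-theoretic transfers cleanly. The first is that invariance for the semigroup of \cite{LST22} implies infinitesimal invariance. The second is that conservativeness plus finiteness gives uniqueness among \emph{all} infinitesimally invariant measures, as Theorem \ref{nonexofOHHD} requires. For both I would rely on the cited results of \cite{LT22}, exactly as in the proof of Proposition \ref{propintegrep}.
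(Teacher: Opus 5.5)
Your proposal is correct and follows essentially the same route as the paper. For (a)$\Rightarrow$(b) and (i) you use Proposition \ref{prop:lyap-criterion} and Theorem \ref{thm:polydifdrift}; for (b)$\Rightarrow$(a) you use \cite[Lemma 3.16]{LT22}, to which your derivative argument reduces, together with the Lyapunov equation and \cite[Theorem 4.6]{K02}; for (ii)/(iii) you use conservativeness for $L^{\widetilde{\mathbf{P}}}$, uniqueness of $\mu$ via \cite{LT22}, and Theorem \ref{nonexofOHHD}. The only differences are cosmetic: you get negative definiteness of $S$ from Theorem \ref{allGOHHD}(iii) rather than Proposition \ref{propintegrep}(ii), and the Riccati equation via Lemma \ref{lemma algebraic riccati} rather than directly from the construction of $S$.
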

\begin{proof}
(a) $\Rightarrow$ (b): Since $\mu$ is an infinitesimally invariant measure for $(L^{A,\mathbf{P}}, C_0^{\infty}(\R^2))$, it follows from Lemma \ref{allinvariantmeasures0}(ii) and Theorem \ref{propstrohelm}(i), that \eqref{prop:eq1} holds. Moreover, since $G$ is Hurwitz, it follows by Proposition \ref{propintegrep}(ii) that $S$ is negative definite, so that $\mu=\exp(\langle Sx,x\rangle)dx$ is finite and $\Phi(x)=\frac12\langle Sx,x\rangle$ has only strictly negative values on the sphere $S^1$. Obviously, for  $m:=\mathrm{deg}(\Phi)=2$, we have $q:=\mathrm{deg}(\mathbf{P}-A\nabla \Phi)\le 2m-2=2$. Therefore the assertion follows from Proposition \ref{prop:lyap-criterion}.\\
(b) $\Rightarrow$ (a): Since $\mu$ is finite, we must have that $S$ is negative definite and moreover by \cite[Lemma 3.16]{LT22} $\mu$ is an infinitesimally invariant measure for $(L^{A,\mathbf{P}}, C_0^{\infty}(\R^2))$. By construction, $S$ solves \eqref{SOHHDlinalg1}, which we can multiply from the left and from the right with $S^{-1}$ to see that $P=-S^{-1}$ is a solution to the Lyapunov equation
\[
PG^T+GP=-2A,
\]
and therefore $G^T$ is Hurwitz by \cite[Theorem 4.6]{K02}. But then also $G$ is Hurwitz.\\
Using (a), (b) and \cite[Proposition 3.13, Theorems 3.15 and 3.17]{LT22} we obtain both, the uniqueness of $\mu$ as invariant and  as infinitesimally invariant measure. For the last statement on the OHHD, we first note that $\mu$ is an infinitesimally invariant measure of $(L^{\widetilde{\mathbf{P}}},C_0^{\infty}(\mathbb{R}^2))$ by Lemma \ref{allinvariantmeasures0}(ii). Moreover $\mu$ is finite and similarly to the proof (a) $\Rightarrow$ (b), it can be shown that conservativeness holds for $(L^{\widetilde{\mathbf{P}}},C_0^{\infty}(\mathbb{R}^2))$, thus $\mu$ is the unique infinitesimally invariant measure for $(L^{\widetilde{\mathbf{P}}},C_0^{\infty}(\mathbb{R}^2))$, hence  Theorem \ref{nonexofOHHD} applies, noting that $\mathrm{div}(\widetilde{\mathbf{P}}-\nabla \Phi)=0$ if and only if $\mathrm{div}(\mathbf{H})=0$.
\end{proof}
\begin{example}\label{exam:quadnonOHHD}
Let
\[
G=\begin{pmatrix}0&1\\ -1&-3\end{pmatrix},\quad \text{and}\quad A=\begin{pmatrix}1&-1\\ -1&2\end{pmatrix}.
\] 
Then $G$ is a Hurwitz matrix, $A$ is symmetric and positive definite and one can readily check that $G=AS$, with
\[
S=\begin{pmatrix}-1&-1\\ -1&-2\end{pmatrix},
\]
being negative definite. Then obviously $\mathrm{trace}(G-AS)=0$ and $\langle Sz,(G-AS)z\rangle=0$ and it follows by Proposition \ref{propintegrep} that
\begin{eqnarray*}
\mu=e^{2\Phi(x,y)}dxdy=e^{-x^2-2xy-2y^2}dxdy,
\end{eqnarray*}
is the unique infinitesimally invariant measure for $(L^{A,G},C_0^{\infty}(\mathbb{R}^d))$ and the unique (finite) invariant measure for the semigroup associated to the closed extension of $(L^{A,G},C_0^{\infty}(\mathbb{R}^d))$ and its associated stochastic process, constructed in \cite{LST22}. In particular, 
$$
\tilde Gz=Sz+(G-AS)z=Sz=\nabla \Phi(z)
$$
is the unique SOHHD of $\tilde Gz$. Now we consider an antisymmetric  polynomial perturbation $\mathbf{H}$ of the $\mu$-symmetric operator $L^{A,G}$ without changing the unique invariant measure $\mu$.
Let for $z=(x,y)\in \R^2$
\begin{eqnarray*}
\mathbf{H}(z)\ =\ \mathbf{P}_0(z)+\mathbf{P}_2(z)&=&(\alpha_{0,0}+\alpha_{2,0}x^{2}+\alpha_{2,1}xy+\alpha_{2,2}y^{2},\beta_{0,0}+\beta_{2,0}x^{2}+\beta_{2,1}xy+\beta_{2,2}y^{2})\\
&=&(3/2+2x^{2}+xy-6y^{2},1-2x^{2}+xy+3y^{2}).
\end{eqnarray*}
Then one can easily check that \eqref{invariance P_2}, hence \eqref{linplusquadplusconst2} holds, and one has
\[
\mathbf{P}(z)=Gz+\mathbf{H}(z)=(3/2+y+2x^{2}+xy-6y^{2},1-x-3y-2x^{2}+xy+3y^{2}).
\]
Thus by Proposition \ref{allinvariantmeasures}, $\mu$ is the unique infinitesimally invariant measure for $\big (L^{A,\mathbf{P}}, C_0^{\infty}(\R^2)\big )$ and the unique (finite) invariant measure for the semigroup associated to the closed extension of $(L^{A,\mathbf{P}},C_0^{\infty}(\mathbb{R}^d))$ and its associated stochastic process, constructed in \cite{LST22}. Since moreover
\[
\frac 12\mathrm{div} ( \mathbf{H}(z))=\frac12(4x+y+x+6y)=\frac52 x+\frac72 y\not =0,
\]
i.e \eqref{div zero d=2} is not satisfied, it follows from Proposition \ref{allinvariantmeasures} that 
 \begin{eqnarray*}
\tilde{\mathbf{P}}(z) :=Sz+\sum_{k=0}^2\mathbf{P}_k (z)-ASz=\nabla \Phi(z)+\mathbf{H}(z), 
\end{eqnarray*}
 is not an SOHHD and that there does not exist an SOHHD for $\tilde{\mathbf{P}}$. 
\end{example} 
\medskip     
For $n=2$, we saw $\mu$ is infinitesimally invariant for $\big (\frac{1}{2}\mathrm{trace}(A\nabla^2)+ \langle \sum_{k=0}^n\mathbf{P}_k, \nabla \cdot \rangle, C_0^{\infty}(\R^2)\big )$, if and only if 
\begin{enumerate}
\item[1.] $\langle Sz, \mathbf{P}_0(z) \rangle +\frac 12\mathrm{div} ( \mathbf{P}_2(z))= 0$ (homogeneous of degree $1$),
\item[2.] $\frac 12\mathrm{div} ( \mathbf{P}_2(z))=0$ (homogeneous of degree $2$).
\end{enumerate}
For $n\ge 3$, $\mu$ is infinitesimally invariant for $\big (\frac{1}{2}\mathrm{trace}(A\nabla^2)+ \langle \sum_{k=0}^n\mathbf{P}_k, \nabla \cdot \rangle, C_0^{\infty}(\R^2)\big )$, if and only if 
$\mathrm{div}_{\mu}(\mathbf{P}_0+\mathbf{P}_2+\ldots + \mathbf{P}_n)=0$. \\[3pt]
This gives for $n=3$
\begin{enumerate}
\item[1.] $\langle Sz, \mathbf{P}_0(z)\rangle +\frac 12\mathrm{div} ( \mathbf{P}_2(z))= 0$ (homogeneous of degree $1$),
\item[2.] \hspace*{+2cm}$\frac 12\mathrm{div} ( \mathbf{P}_3(z))=0$ (homogeneous of degree $2$),
\item[3.] $\langle Sz, \mathbf{P}_{2}(z) \rangle=0$ \hspace*{+2.15cm}(homogeneous of degree $3$),
\item[5.] $\langle Sz, \mathbf{P}_{3}(z) \rangle=0$ \hspace*{+2.15cm}(homogeneous of degree $4$),
\end{enumerate}
and for general $n\ge 4$
\begin{enumerate}
\item[1.] $\langle Sz, \mathbf{P}_0(z) \rangle +\frac 12\mathrm{div} ( \mathbf{P}_2(z))= 0$ (homogeneous of degree $1$),
\item[2.] \hspace*{+2cm}$\frac 12\mathrm{div} ( \mathbf{P}_3(z))=0$ (homogeneous of degree $2$),
\item[3.] $\langle Sz, \mathbf{P}_2(z) \rangle+\frac 12\mathrm{div} ( \mathbf{P}_4(z))=0$ (homogeneous of degree $3$),\\
\qquad\qquad\qquad\vdots\qquad\qquad\qquad\vdots
\item[4.] $\langle Sz, \mathbf{P}_{n-2}(z) \rangle+\frac 12\mathrm{div}(\mathbf{P}_{n}(z) )= 0$ (homogeneous of degree $n-1$),
\item[5.] $\langle Sz, \mathbf{P}_{n-1}(z) \rangle=0$ \hspace*{+2.2cm}(homogeneous of degree $n$),
\item[6.] $\langle Sz, \mathbf{P}_{n}(z) \rangle=0$ \hspace*{+2.6cm}(homogeneous of degree $n+1$).
\end{enumerate}

\subsection{Nonlinear vector fields via anti-symmetric matrices of functions}

\begin{lemma} \label{conservlema}
Let $\mathbf{G} \in L^p_{loc}(\mathbb{R}^d, \mathbb{R}^d)$ for some $p\in(d,\infty)$, $A=(a_{ij})_{1\le i,j\le d}$ be a symmetric and positive definite matrix of real numbers,  and let $(P_t)_{t>0}$ be the regularized semigroup associated with the $L^1$-closed extension of $(L^{A,\mathbf{G}}, C_0^{\infty}(\mathbb{R}^d))$, defined by
$$
L^{A,\mathbf{G}}f = \frac12 \mathrm{trace}(A\nabla^2 f) + \langle \mathbf{G}, \nabla f \rangle, \quad f \in C_0^{\infty}(\mathbb{R}^d)
$$
(see \cite{LST22}). Let $T$ be a $d \times d$ strictly positive definite symmetric matrix. If there exist constants $N_0\in \N$ and $M>0$ such that
$$
\langle \mathbf{G}(x), Tx \rangle \leq M \|x\|^2, \quad \text{for a.e. } x \in \mathbb{R}^d \setminus \overline{B}_{N_0},
$$
then $(P_t)_{t>0}$ is conservative.
\end{lemma}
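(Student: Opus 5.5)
The plan is to reduce the statement to the Lyapunov-type non-explosion criterion of \cite[Lemma 3.26]{LST22}, which was already used in Theorem \ref{thm:polydifdrift}. That criterion says the following. Suppose there is $V\in C^2(\R^d)$ with $V\ge 0$ (or $\ge 1$) and $\lim_{\|x\|\to\infty}V(x)=\infty$, together with constants $N_0\in\N$ and $M'>0$, such that
\[
L^{A,\mathbf{G}}V\le M'V \quad \text{a.e. on } \R^d\setminus\overline{B}_{N_0}.
\]
Then $(P_t)_{t>0}$ is conservative. So the whole task is to choose a good $V$ and verify this differential inequality.

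\textbf{Choice of $V$.} Guided by the hypothesis, I take the quadratic form adapted to $T$:
\[
V(x):=\tfrac12\langle Tx,x\rangle+1,\qquad x\in\R^d.
\]
Since $T$ is symmetric and strictly positive definite, $V$ is smooth and $V\ge 1$. If $\lambda_{\min}>0$ and $\lambda_{\max}$ denote the smallest and largest eigenvalues of $T$, then
\[
\tfrac{\lambda_{\min}}{2}\|x\|^2\le V(x)-1\le \tfrac{\lambda_{\max}}{2}\|x\|^2 .
\]
In particular $V(x)\to\infty$ as $\|x\|\to\infty$. By symmetry of $T$ we have $\nabla V(x)=Tx$ and $\nabla^2V=T$.

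\textbf{Estimating $L^{A,\mathbf{G}}V$.} Substituting into the operator gives
\[
L^{A,\mathbf{G}}V(x)=\tfrac12\operatorname{trace}(AT)+\langle \mathbf{G}(x),Tx\rangle .
\]
The first term is a constant $c_0:=\tfrac12\operatorname{trace}(AT)$, which is in fact positive. For the second term I use the hypothesis: for a.e. $x$ with $\|x\|>N_0$,
\[
\langle \mathbf{G}(x),Tx\rangle\le M\|x\|^2\le \tfrac{2M}{\lambda_{\min}}\bigl(V(x)-1\bigr).
\]
Combining the two pieces, for a.e. $x\in\R^d\setminus\overline{B}_{N_0}$,
\[
L^{A,\mathbf{G}}V(x)\le c_0+\tfrac{2M}{\lambda_{\min}}V(x)\le \Bigl(c_0+\tfrac{2M}{\lambda_{\min}}\Bigr)V(x),
\]
where the last step uses $V\ge 1$. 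This is exactly the criterion with $M':=c_0+2M/\lambda_{\min}$, so conservativeness follows from \cite[Lemma 3.26]{LST22}.

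\textbf{Main point to check.} I expect the only delicate step to be verifying that \cite[Lemma 3.26]{LST22} applies under the present regularity. Two things need confirming: that the criterion holds for constant symmetric positive definite $A$ with $\mathbf{G}\in L^p_{loc}$, $p>d$; and that an inequality holding only almost everywhere outside a ball is enough. Both are part of that framework, since $L^{A,\mathbf{G}}V$ is only defined as an $L^p_{loc}$-function. If the cited lemma instead requires the inequality outside a ball of radius $N_0$ with some other normalization (for example $V\ge0$), the estimate above transfers immediately, because $V\ge 1$ and the constant $c_0$ is absorbed into $M'V$.
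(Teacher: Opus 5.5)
Your proof is correct and follows essentially the same route as the paper. The paper takes $V(x)=1+\langle Tx,x\rangle$ (without your factor $\tfrac12$), bounds $L^{A,\mathbf{G}}V\le \mathrm{trace}(AT)+\tfrac{2M}{m}\langle Tx,x\rangle\le \hat M V$ outside $\overline{B}_{N_0}$ with $m$ the smallest eigenvalue of $T$ and $\hat M=\max\bigl(\mathrm{trace}(AT),\tfrac{2M}{m}\bigr)$, and concludes by \cite[Lemma 3.26, Corollary 3.23]{LST22}; the different normalization of $V$ and your use of a sum instead of a maximum for the constant are immaterial.
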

\begin{proof}
Consider the Lyapunov function defined by
$$
V(x) = 1 + \langle Tx, x \rangle, \quad x \in \mathbb{R}^d.
$$
Since $T$ is strictly positive definite, it holds that $V(x) \geq 1$ for all $x \in \mathbb{R}^d$, and
$$
\lim_{\|x\| \to \infty} V(x) = \infty.
$$
A direct calculation yields
$$
\nabla V(x) = 2Tx, \quad \nabla^2 V(x) = 2T,
$$
and we obtain
\begin{align*}
L^{A,\mathbf{G}}V(x) &= \frac12 \mathrm{trace}(A\nabla^2 V(x)) + \langle \mathbf{G}(x), \nabla V(x) \rangle \\
&= \mathrm{trace}(AT) + 2\langle \mathbf{G}(x), Tx \rangle.
\end{align*}
Let $m$ be the minimum eigenvalue of $T$. By the assumption on $\mathbf{G}$, we get for a.e. $x \in \mathbb{R}^d \setminus \overline{B}_{N_0}$,
$$
L^{A,\mathbf{G}}V(x) \leq  \mathrm{trace}(AT) + 2M \|x\|^2 \leq \mathrm{trace}(AT) + \frac{2M}{m} \langle Tx, x \rangle \leq \hat{M} V(x), 
$$
where $\hat{M} := \max\big (\mathrm{trace}(AT), \frac{2M}{m}\big )$. Thus, \cite[Lemma 3.26, Corollary 3.23]{LST22} imply that the associated semigroup $(P_t)_{t>0}$ is conservative.
\end{proof}\\
In the following Proposition \ref{suffnececondOHHD}, similarly to  \cite{HHS05}, we present a class of general vector fields that are OHHDs, hence WHHDs.
\begin{proposition}\label{suffnececondOHHD}
Let $\Phi \in H^{1,2}_{loc}(\mathbb{R}^d)\cap L^{\infty}_{loc}(\mathbb{R}^d)$, $c_{ij}=-c_{ji}\in H^{1,2}_{loc}(\R^d)\cap L^{\infty}_{loc}(\mathbb{R}^d),\ 1\le i,j\le d$, and $\mu=\rho\,dx$, where $\rho=\exp(2 \Phi)$. Set 
$$
\overline{\mathbf{B}}:= C^T\nabla \Phi.
$$ 
(i) We have the following equivalences:
$$
\mathrm{div}_{\mu}(\mathrm{div}C)= 0 \ \Leftrightarrow \  \mathrm{div}_{\mu}(\overline{\mathbf{B}})= 0 \ \Leftrightarrow\ \big \langle \mathrm{div} C, \nabla \Phi\big \rangle=0\ \Leftrightarrow\ \mathrm{div}_{dx}(\overline{\mathbf{B}})= 0.
$$ 
(ii) Let $k_1, k_2 \in \mathbb{R}$ be constants. Then, we have:
\begin{enumerate}
\item [(a)] If either one of the four equivalent conditions in (i) holds, then 
\[
\mathbf{G} =  \nabla \Phi+ k_1C^T\nabla \Phi+ k_2\mathrm{div} C
\]
is an OHHD.
\item [(b)] If either $k_1\neq 0$ or $k_2 \neq 0$, and $\mathbf{G} =  \nabla \Phi+ k_1C^T\nabla \Phi+ k_2\mathrm{div} C$ is an OHHD, then one of the four equivalent conditions of (i) is satisfied.
\end{enumerate}
\end{proposition}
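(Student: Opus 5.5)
The plan is to derive two integration-by-parts identities for the antisymmetric matrix $C=(c_{ij})$ and then read off all equivalences from them together with Theorem \ref{propstrohelm}(ii). Recall the convention $(\mathrm{div}\,C)_i=\sum_j\partial_j c_{ji}$. Since $c_{ij}\in H^{1,2}_{loc}(\mathbb{R}^d)\cap L^\infty_{loc}(\mathbb{R}^d)$ and $\nabla\Phi\in L^2_{loc}$, both $\mathrm{div}\,C$ and $\overline{\mathbf{B}}=C^T\nabla\Phi$ lie in $L^2_{loc}(\mathbb{R}^d,\mathbb{R}^d)$. Hence all the integrals below make sense.

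\emph{Identity 1:} $\mathrm{div}_{dx}(\mathrm{div}\,C)=0$. For $f\in C_0^\infty(\mathbb{R}^d)$,
\[
\int\langle \mathrm{div}\,C,\nabla f\rangle\,dx=\sum_{i,j}\int\partial_j c_{ji}\,\partial_i f\,dx=-\sum_{i,j}\int c_{ji}\,\partial_j\partial_i f\,dx=0,
\]
because $c_{ji}=-c_{ij}$ while $\partial_i\partial_j f$ is symmetric in $(i,j)$. By the density argument at the end of the proof of Theorem \ref{propstrohelm}(ii), this extends to test functions in $H^{1,2}(\mathbb{R}^d)_0\cap L^\infty(\mathbb{R}^d)$.

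\emph{Identity 2:}
\[
\int\langle \overline{\mathbf{B}},\nabla f\rangle\,dx=\int\langle \mathrm{div}\,C,\nabla\Phi\rangle f\,dx .
\]
To prove it, write $\partial_j\Phi\,\partial_i f=\partial_j(\Phi\,\partial_i f)-\Phi\,\partial_j\partial_i f$. The second term contributes nothing after summation against $c_{ji}$, by antisymmetry. Integrating the first term by parts (legitimate, since $\Phi\,\partial_i f\in H^{1,2}_0\cap L^\infty$) gives $-\int\langle\mathrm{div}\,C,\Phi\nabla f\rangle\,dx$. Next use $\Phi\nabla f=\nabla(\Phi f)-f\nabla\Phi$. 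Identity 1, applied to the admissible test function $\Phi f$, kills the term $\int\langle\mathrm{div}\,C,\nabla(\Phi f)\rangle\,dx$, and what remains is the claim. The main technical point of the whole proof is justifying these manipulations under the weak regularity $\Phi\in H^{1,2}_{loc}\cap L^\infty_{loc}$. I expect it to be handled exactly by this product-rule/density step; if needed, one can mollify $\Phi$ and pass to the limit, using the $L^\infty_{loc}$-bounds on $c_{ij}$ and $\Phi$.

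For (i) I then apply \eqref{equidivfre} twice. For $\mathbf{B}=\mathrm{div}\,C$, the term $\frac12\int\langle\mathrm{div}\,C,\nabla\varphi\rangle\,dx$ vanishes by Identity 1. So $\mathrm{div}_\mu(\mathrm{div}\,C)=0$ iff $\int\langle\nabla\Phi,\mathrm{div}\,C\rangle\varphi\,dx=0$ for all $\varphi$, i.e. iff $\langle\mathrm{div}\,C,\nabla\Phi\rangle=0$ a.e. For $\mathbf{B}=\overline{\mathbf{B}}$, the pointwise identity $\langle\nabla\Phi,C^T\nabla\Phi\rangle=0$ (antisymmetry) removes the first term in \eqref{equidivfre}. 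Hence $\mathrm{div}_\mu(\overline{\mathbf{B}})=0$ iff $\mathrm{div}_{dx}(\overline{\mathbf{B}})=0$. By Identity 2, the latter is equivalent to $\langle\mathrm{div}\,C,\nabla\Phi\rangle=0$ a.e. This closes the chain of four equivalences.

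For (ii), set $\mathbf{B}:=k_1C^T\nabla\Phi+k_2\,\mathrm{div}\,C\in L^2_{loc}$. By Identities 1 and 2,
\[
\int\langle\mathbf{B},\nabla f\rangle\,dx=k_1\int\langle\mathrm{div}\,C,\nabla\Phi\rangle f\,dx .
\]
Pointwise, $\langle\nabla\Phi,\mathbf{B}\rangle=k_2\langle\nabla\Phi,\mathrm{div}\,C\rangle$. For (a), if $\langle\mathrm{div}\,C,\nabla\Phi\rangle=0$ a.e., both expressions vanish. Thus $\mathbf{G}=\nabla\Phi+\mathbf{B}$ satisfies \eqref{divfreecon}, \eqref{mathbfbolb} and \eqref{BorthnablaPhi}, i.e. it is an OHHD. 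For (b), being an OHHD forces
\[
k_1\langle\mathrm{div}\,C,\nabla\Phi\rangle=0 \quad\text{and}\quad k_2\langle\mathrm{div}\,C,\nabla\Phi\rangle=0 \quad\text{a.e.}
\]
Since $k_1\neq0$ or $k_2\neq0$, we get $\langle\mathrm{div}\,C,\nabla\Phi\rangle=0$, which is one of the conditions in (i).
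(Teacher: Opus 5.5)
Your proof is correct, but it is organized differently from the paper's.

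The paper gets the first equivalence from the fact that $\beta^{\rho,C^T}=\frac12\mathrm{div}\,C+\overline{\mathbf{B}}$ is $\mu$-divergence free, taken from an external reference or computed directly. It then proves two separate integration-by-parts identities, each by mollifying both $\Phi$ and the $c_{ij}$: the weighted one, $\int\langle\overline{\mathbf{B}},\nabla f\rangle\,d\mu=\int\langle\mathrm{div}\,C,\nabla\Phi\rangle f\,d\mu$, and the unweighted one.

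You instead feed both fields into the paper's own criterion \eqref{equidivfre}. For $\mathrm{div}\,C$ the $\nabla\varphi$-term drops because $\mathrm{div}\,C$ is $dx$-divergence free. For $\overline{\mathbf{B}}$ the pointwise term drops because $\langle\nabla\Phi,\overline{\mathbf{B}}\rangle=0$. This gives the first$\Leftrightarrow$third and second$\Leftrightarrow$fourth equivalences immediately, so only the unweighted identity needs proof. Your product-rule argument for it is already rigorous: it uses the compactly supported $H^{1,2}$ test functions $\Phi\,\partial_i f$ and $\Phi f$ against $c_{ji}\in H^{1,2}_{loc}$. The mollification fallback you mention is therefore unnecessary.

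Your route is more self-contained and needs one identity instead of two. It also makes the structure visible: $\mathrm{div}\,C$ and $\overline{\mathbf{B}}$ each satisfy one of the two OHHD conditions automatically. By \eqref{equidivfre2}--\eqref{equidivfreadditi}, their $\mu$-divergence-freeness is then equivalent to the missing condition. The paper's route instead ties the statement to the canonical $\mu$-divergence-free field $\beta^{\rho,C^T}$, which is the form reused in Example \ref{counterexamplestrong} and Theorem \ref{mainthm}(i). Part (ii) is argued essentially the same way in both.
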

\begin{proof}
(i) Since $\beta^{\rho, C^T}=\frac 12\mathrm{div} C+ \overline{\mathbf{B}}$ is $\mu$-divergence free (cf. e.g. \cite[Remark 2.28]{LST22} or just calculate it), $\overline{\mathbf{B}}$ is $\mu$-divergence free, if and only if $\mathrm{div} C$ is $\mu$-divergence free. This shows the first equivalence. The remaining two equivalences are shown as follows: Let $(\Phi_m)_{m\geq 1}$ be a mollification of $\Phi$, and for each $1 \leq i,j \leq d$, let $(c_{ij,n})_{n\geq 1}$ be a mollification of $c_{ij}$. Then for any $f \in C_0^{\infty}(\R^d)$
\begin{eqnarray*}
\int_{\R^d}\langle \overline{\mathbf{B}}, \nabla f\rangle \rho \, dx&=&
\lim_{m \rightarrow \infty} 
\lim_{n \rightarrow \infty} \int_{\R^d} e^{2\Phi_m}\big \langle (\sum_{j=1}^d c_{1j,n}^T\partial_j \Phi_m,\ldots,\sum_{j=1}^d c_{dj, n}^T\partial_j \Phi_m), \nabla f\big \rangle\, dx \\
&=&
\lim_{m \rightarrow \infty} 
\lim_{n \rightarrow \infty} -\int_{\R^d}\Big (  \sum_{i=1}^d  \partial_i \big ( e^{2\Phi_m} \sum_{j=1}^d c_{ij}^T\partial_j \Phi_m  \big )\Big )f\, dx \\
&=&\lim_{m \rightarrow \infty} 
\lim_{n \rightarrow \infty}-\int_{\R^d} e^{2\Phi_m}\Big (\sum_{i,j=1}^d \partial_i c_{ij, n}^T\partial_j \Phi_m \Big )f\, dx\\
&=&\int_{\R^d} e^{2\Phi}\Big (\sum_{i,j=1}^d \partial_i c_{ji}^T\partial_j \Phi \Big )f\, dx\ =\ \int_{\R^d}e^{2\Phi}\big \langle \mathrm{div} C, \nabla \Phi\big \rangle f \, dx,
\end{eqnarray*}
and similarly, for any $f \in C_0^{\infty}(\R^d)$
\begin{align*}
\int_{\R^d}\langle \overline{\mathbf{B}}, \nabla f\rangle \, dx=\int_{\R^d} \big \langle \mathrm{div} C, \nabla \Phi\big \rangle f \, dx.
\end{align*}
(ii) Obviously $\mathrm{div}_{dx}(\mathrm{div} C)=0$ and $\big \langle \overline{\mathbf{B}}, \nabla \Phi\big \rangle=0$ always hold by the anti-symmetry of $C$. By definition $\nabla \Phi+ k_1\overline{\mathbf{B}}+ k_2\mathrm{div} C$ is an OHHD, if and only if 
$$
\mathrm{div}_{dx}(k_1\overline{\mathbf{B}}+ k_2\mathrm{div} C)= k_1\mathrm{div}_{dx}(\overline{\mathbf{B}})=0\ \text{ and }\ \big \langle k_1\overline{\mathbf{B}}+ k_2\mathrm{div} C, \nabla \Phi\big \rangle=k_2\big \langle \mathrm{div} C, \nabla \Phi\big \rangle=0.
$$
The latter is obviously true, if one of the four equivalent conditions in (i) is satisfied. Conversely, if  either $k_1\neq 0$ or $k_2 \neq 0$, and $\nabla \Phi+ k_1C^T\nabla \Phi+ k_2\mathrm{div} C$ is an OHHD, then either $\mathrm{div}_{dx}(\overline{\mathbf{B}})=0$ or $\big \langle \mathrm{div} C, \nabla \Phi\big \rangle=0$ and therefore 
one of the four equivalent conditions of (i) is satisfied.
\end{proof}

\begin{theorem} \label{mainthm}
Let $C=(c_{ij})_{1 \leq i,j \leq d}$ be a matrix of functions such that $c_{ij}=-c_{ji} \in H^{1,p}_{loc}(\mathbb{R}^d)$ with $p \in (d, \infty)$ for all $1 \leq i,j \leq d$. Let  $G$ be a $d \times d$ matrix of real numbers, $S$ be a symmetric matrix satisfying \eqref{SOHHDlinalg1} with $A=id$ (cf. Proposition \ref{propintegrep}(i)), and $\Phi(x) = \frac12 \langle Sx, x \rangle$. Define the vector field $\mathbf{G}_0$ by
$$
\mathbf{G}_0(x) := Gx + C(x)^T Sx + \frac12 {\rm div} C(x), \quad x \in \mathbb{R}^d.
$$
Then, the following statements hold:
\begin{itemize}
\item[(i)] The decomposition
\begin{equation} \label{decomposipol}
\mathbf{G}_0(x) = \nabla \Phi(x) + \mathbf{B}(x),
\end{equation}
where $\mathbf{B}(x) = (G-S)x + C^T Sx + \frac12 {\rm div} C(x)$, is a WHHD for $\mathbf{G}_0$.
\item[(ii)] Assume that
\begin{equation} \label{orthogonalit}
\langle\mathrm{div} C(x), Sx \rangle = 0 \quad \text{for a.e. } x \in \mathbb{R}^d.
\end{equation}
Then, \eqref{decomposipol} is an OHHD for $\mathbf{G}_0$.
Moreover, for any triple of constants $k_0, k_1, k_2 \in \mathbb{R}$, the vector field defined by
\begin{equation} \label{orthogong1}
\mathbf{G}_1(x):= \nabla \Phi(x)+k_0(G-S)x + k_1C(x)^T Sx +k_2 {\rm div} C(x)
\end{equation}
is an OHHD. 

\item[(iii)] Suppose that \eqref{orthogonalit} holds and that $G$ is a Hurwitz matrix.
Then, for any triple of constants $k_0, k_1, k_2 \in \mathbb{R}$, \eqref{orthogong1} is the unique OHHD for $\mathbf{G}_1$. Furthermore, $\mu(dx) = e^{\langle Sx, x \rangle}\,dx$ is the unique infinitesimally invariant measure for $(L^{\mathbf{G}_1},C_0^{\infty}(\mathbb{R}^d))$ and the unique invariant measure for the semigroup associated to the closed extension of $(L^{\mathbf{G}_1},C_0^{\infty}(\mathbb{R}^d))$ and its associated stochastic process, both constructed in \cite{LST22}. 
\end{itemize}
\end{theorem}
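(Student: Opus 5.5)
The plan is to split $\mathbf{B}$ into three pieces and treat each with an earlier result:
\[
\mathbf{B}_1:=(G-S)x,\qquad \mathbf{B}_2:=C^T\nabla\Phi=C^TSx,\qquad \mathbf{B}_3:=\mathrm{div}\, C .
\]
Since $\mathrm{div}_\mu$-freeness is linear in the vector field, it suffices to check each piece separately.

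For (i), $S$ solves \eqref{SOHHDlinalg1} with $A=id$. So by Lemma \ref{lemma algebraic riccati} (last part), $Sx+(G-S)x$ is a SOHHD, and in particular $\mathrm{div}_\mu((G-S)x)=0$. Moreover $\beta^{\rho,C^T}=\frac12\mathrm{div}\, C+C^TSx$ is $\mu$-divergence free; this is the fact used in the proof of Proposition \ref{suffnececondOHHD}(i), and it follows from the antisymmetry of $C$ by the mollification computation there. The regularity $\Phi\in C^\infty$ and $c_{ij}\in H^{1,p}_{loc}\subset C\cap H^{1,2}_{loc}$ suffices for this. Adding the two statements gives $\mathrm{div}_\mu(\mathbf{B})=0$, so \eqref{decomposipol} is a WHHD. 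We also need $\mathbf{B}\in L^2_{loc}$, which is clear.

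For (ii), assume \eqref{orthogonalit}. The pieces behave as follows.
\begin{itemize}
\item By Lemma \ref{lemma algebraic riccati}, $(G-S)x$ satisfies both $\mathrm{div}((G-S)x)=\mathrm{trace}(G-S)=0$ and $\langle Sx,(G-S)x\rangle=0$.
\item By antisymmetry, $\langle C^TSx,Sx\rangle=0$ pointwise.
\item By antisymmetry, $\mathrm{div}_{dx}(\mathrm{div}\, C)=0$, since $\sum_{i,j}\partial_i\partial_j c_{ij}=0$ in the distributional sense.
\item By Proposition \ref{suffnececondOHHD}(i), \eqref{orthogonalit} is equivalent to $\mathrm{div}_{dx}(C^TSx)=0$.
\end{itemize}
Hence each of the three pieces is both $dx$-divergence free and pointwise orthogonal to $\nabla\Phi$, using \eqref{orthogonalit} for $\mathrm{div}\, C$. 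Any linear combination $k_0\mathbf{B}_1+k_1\mathbf{B}_2+k_2\mathbf{B}_3$ then has the same two properties, so $\mathbf{G}_1$ is an OHHD. Taking $k_0=k_1=1$ and $k_2=\frac12$ gives \eqref{decomposipol}.

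For (iii), the goal is to apply Theorem \ref{nonexofOHHD}(ii) to $\widetilde{\mathbf{G}}=\mathbf{G}_1$, which requires uniqueness of the infinitesimally invariant measure for $L^{\mathbf{G}_1}$. By Theorem \ref{propstrohelm}, $\mu$ is infinitesimally invariant for $L^{\mathbf{G}_1}$, since an OHHD is a WHHD. Because $G$ is Hurwitz, Proposition \ref{propintegrep}(ii) shows that the (unique) $S$ there is negative definite, so $\mu$ is finite. Here I take $S$ to be that solution; any negative definite solution of \eqref{SOHHDlinalg1} is the one given by the Lyapunov integral.

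Conservativeness is the expected obstacle, because $C$ is a general $H^{1,p}_{loc}$ function with no growth control. Lemma \ref{conservlema} applied with $T=-S$ handles it:
\[
\langle\mathbf{G}_1(x),-Sx\rangle=-\|Sx\|^2-k_0\langle (G-S)x,Sx\rangle-k_1\langle C^TSx,Sx\rangle-k_2\langle\mathrm{div}\, C,Sx\rangle=-\|Sx\|^2\le 0,
\]
where the three cross terms vanish by the orthogonality relations from (ii). So the drift condition holds with any $M>0$ and no growth assumption on $C$ is needed; also $\mathbf{G}_1\in L^p_{loc}$. Conservativeness together with finiteness of $\mu$ gives, via \cite[Proposition 3.13, Theorems 3.15 and 3.17]{LT22} as in Proposition \ref{propintegrep}(ii), that $\mu$ is the unique infinitesimally invariant measure and the unique invariant measure for the semigroup and the process. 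Theorem \ref{nonexofOHHD}(ii) then yields uniqueness of the OHHD \eqref{orthogong1}.
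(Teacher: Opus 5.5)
Your proposal is correct and follows the paper's proof essentially step by step: $(G-S)x$ plus $\beta^{\rho,C^T}$ for (i); Proposition \ref{suffnececondOHHD} together with the linear SOHHD from Lemma \ref{lemma algebraic riccati} for (ii); and for (iii), Lemma \ref{conservlema} with $T=-S$, followed by the uniqueness results of LT22 and Theorem \ref{nonexofOHHD}. In (iii) you also spell out why all three cross terms vanish, which the paper leaves implicit. Your restriction to the Lyapunov solution $S$ in (iii) is harmless: for Hurwitz $G$ the symmetric solution of \eqref{SOHHDlinalg1} is unique (the uniqueness corollary in Section \ref{section:algRiceq}), so the given $S$ is automatically negative definite.
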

\begin{proof}
First recall that for $\mu(dx)= \rho(x) \,dx$, with $\rho(x)= e^{\langle Sx, x \rangle}$, we have $Sx=\nabla \Phi = \frac{1}{2\rho } \nabla \rho$. \\
(i) To prove the assertion, it suffices to show that $\mathrm{div}_{\mu}(\mathbf{B})=0$.
By Lemma \ref{lemma algebraic riccati}, $Gx=Sx+(G-S)x$ is an OHHD, hence by Theorem \ref{propstrohelm} a WHHD and therefore $\mathrm{div}_{\mu}((G-S)x)=0$. Since moreover,  $\mathrm{div}_{\mu}(C^T Sx + \frac12 \mathrm{div} C(x))=0$ as in the proof of Proposition \ref{suffnececondOHHD},  the assertion follows.
\noindent (ii) The assertion follows directly from Proposition \ref{suffnececondOHHD} and the fact that $\widetilde{G}x=Sx+k_0(G-S)x$ is an OHHD for any $k_0\in \R$.

\noindent (iii) Since $G$ is a Hurwitz matrix, $S$ is a strictly negative definite matrix, and so $\mu(dx)=e^{\langle Sx, x \rangle}\,dx$ is a finite measure. In particular, $\mu$ is an infinitesimally invariant measure for $(L^{\mathbf{G}_1}, C_0^{\infty}(\mathbb{R}^d))$ by (ii) and Theorem \ref{propstrohelm}. Moreover, since $C$ is skew-symmetric, we observe that
$$
\langle C^T Sx, Sx \rangle = 0, \quad \text{for all } x \in \mathbb{R}^d.
$$
Thus for some constant $M>0$,
$$
\langle \mathbf{G}_1(x), -Sx \rangle  = \langle \nabla \Phi(x), -Sx \rangle = \langle Sx, -Sx \rangle \leq M \|x\|^2.
$$
Therefore, by Lemma \ref{conservlema}, the semigroup associated with $(L^{\mathbf{G}_1},C_0^{\infty}(\mathbb{R}^d))$ is conservative. Consequently, the assertion follows from \cite[Proposition 3.13 and Theorems 3.15, 3.17]{LT22}.
\end{proof}

\begin{example}
\begin{itemize}
\item[(i)] Given $S$ as in Theorem \ref{mainthm}, we provide a whole class of anti-symmetric matrices of functions $C=(c_{ij})_{1 \leq i,j \leq d}$ that satisfy \eqref{orthogonalit} and the regularity assumptions of Theorem \ref{mainthm}. Let $\hat{K}=(\hat{k}_{ij})_{1 \leq i,j \leq d}$ be a matrix of functions satisfying $\hat{k}_{ij}=-\hat{k}_{ji} \in C^1(\mathbb{R})$. Define a matrix of functions $C=(c_{ij})_{1 \leq i,j \leq d}$ by
$$
C(x) :=\hat{K}\big(\langle Sx, x \rangle \big), \quad x \in \mathbb{R}^d.
$$
Let $U(x)= \langle Sx, x \rangle$. Then, 
$$
\big(\mathrm{div}C(x) \big)_{j} = \sum_{i=1}^d \partial_i \big( \hat{k}_{ij}(U(x)) \big) = \sum_{i=1}^d \hat{k}'_{ij}(U(x)) \partial_i U(x), \quad  1\le j\le d.
$$
Consequently,
$$
\langle \mathrm{div} C(x), Sx \rangle= \sum_{j=1}^d \sum_{i=1}^d \hat{k}'_{ij}(U(x)) \partial_i U(x) (Sx)_j = \frac{1}{2} \sum_{i,j=1}^d \hat{k}'_{ij}(U(x)) \partial_i U(x) \partial_j U(x)=0.
$$
\item[(ii)] A specific example of a matrix $C$ as in (i) is given as follows: let $K$ be an arbitrary  anti-symmetric $d \times d$ matrix of real numbers and let $\phi: \mathbb{R} \to \mathbb{R}$ be a continuously differentiable function. Then
\begin{eqnarray}\label{exam3.12: eq1}
C(x) := \phi(\langle Sx, x \rangle) K, \quad x \in \mathbb{R}^d
\end{eqnarray}
satisfies the assumptions of (i).\\
For an explicit example of $C$ as in \eqref{exam3.12: eq1} consider the case $d=2$ and assume that $G$ is Hurwitz. Let $S = \begin{pmatrix} s_{11} & s_{12} \\ s_{12} & s_{22} \end{pmatrix}$ be a strictly negative definite matrix solving \eqref{SOHHDlinalg1} with $A=id$. Define
$$
C(x) = \begin{pmatrix} 0 & p(x) \\ -p(x) & 0 \end{pmatrix}, \ \text{where }\ p(x) = \alpha x^2 + \beta xy + \gamma y^2 + \delta x + \varepsilon y.
$$
We want to derive conditions on $C$, more precisely on $p$, so that $\langle \mathrm{div}C(x), Sx \rangle = 0$, i.e. \eqref{orthogonalit} holds.
Now,
$$
\mathrm{div}C(x) = (-\partial_y p, \partial_x p)=( -\beta x - 2\gamma y -\varepsilon, 2\alpha x + \beta y + \delta   )
$$
and
$$
 Sx = ( s_{11}x + s_{12}y, s_{12}x + s_{22}y).
$$
Observe that
\begin{align*}
 (-\beta x - 2\gamma y - \varepsilon )(s_{11}x + s_{12}y) = -\beta s_{11} x^2 - \beta s_{12} xy - 2\gamma  s_{11} xy - 2\gamma  s_{12} y^2 - \varepsilon  s_{11} x - \varepsilon  s_{12} y
\end{align*}
and
\begin{align*}
 (2\alpha x + \beta y + \delta )(s_{12}x + s_{22}y) = 2\alpha s_{12} x^2 + 2\alpha s_{22} xy + \beta s_{12} xy + \beta s_{22} y^2 + \delta  s_{12} x + \delta  s_{22} y.
\end{align*}
By expanding the condition $\langle \mathrm{div}C(x), Sx \rangle = 0$ and collecting terms with respect to the powers of $x$ and $y$, we get
$$
\left\{
\begin{aligned}
 s_{12}\delta   - s_{11}\varepsilon  &= 0, \\
 s_{22}\delta  - s_{12} \varepsilon &= 0.
\end{aligned}
\right.
$$
Since ${\rm det}(S)>0$,
$$
\delta  = 0, \quad \varepsilon = 0.
$$
Next, we compare the coefficients of ($x^2$, $xy$, and $y^2$). Then,
$$
\left\{
\begin{aligned}
  2s_{12}\alpha  - s_{11}\beta  &= 0, \\
  2s_{22}\alpha  -2 s_{11}\gamma  &= 0, \\
  s_{22}\beta  - 2 s_{12}\gamma  &= 0.
\end{aligned}
\right.
$$
Since $S$ is strictly negative definite, we have $s_{11}, s_{22} < 0$. Thus, investigating the second equation, we obtain
$$
\alpha  = k s_{11}, \quad \gamma  = k s_{22}
$$
for some $k \in \mathbb{R}$. Substituting these into the first and third equations, we deduce
$$
\beta  = 2k s_{12}.
$$
Therefore, for some $k \in \mathbb{R}$
$$
p(x) = k(s_{11}x^2 + 2s_{12}xy + s_{22}y^2) = k \langle Sx, x \rangle              
$$            
and we obtain that $C(x)= \phi(\langle Sx, x \rangle) K$, with $\phi(x)=x$ and $K = \begin{pmatrix} 0 & k \\ -k &  0 \end{pmatrix}$.
\item[(iii)]
Let  $G$ be a given $d \times d$ matrix, and $S$ be a symmetric matrix satisfying \eqref{SOHHDlinalg1} with $A=id$ (cf. Proposition \ref{propintegrep}(i)). Based on the framework in (i), we can identify cases where the OHHD structure with potential $\Phi(x) =\frac12 \langle Sx, x \rangle$ is preserved when we add a specific polynomial vector field to the linear vector field $Gx$.\\
For instance, let $d=3$. For arbitrary real numbers $r_{12}, r_{13}, r_{23} \in \mathbb{R}$, let $r_{21}:=-r_{12}$, $r_{31}:=-r_{13}$, $r_{32}:=-r_{23}$, and $r_{11}=r_{22}=r_{33}:=0$. Define the anti-symmetric matrix $\hat{K}(t):=(\hat{k}_{ij})_{1 \leq i,j \leq d}$ as follows:
$$
\hat{k}_{ij}(t)=r_{ij} t^2, \quad t \in \mathbb{R}.
$$
Let $C(x) :=\hat{K}\big(\langle Sx, x \rangle \big)$ for $x \in \mathbb{R}^d$. 
Note that $\hat{k}'_{ij}(t)=2r_{ij} t$ is a linear function of $t \in \mathbb{R}$. Since $U(x)=\langle Sx, x \rangle$ is a quadratic form, $\hat{k}'_{ij}(U(x))$ becomes a homogeneous polynomial of degree 2. Consequently, the term
$$
\big(\mathrm{div} C(x) \big)_{j} = \sum_{i=1}^3 \hat{k}'_{ij}(U(x)) \partial_i U(x) = \sum_{i=1}^3 2r_{ij}\langle Sx, x \rangle \cdot 2(Sx)_i, \quad  1\le j\le d,
$$
is indeed a cubic homogeneous polynomial. 
Therefore, if
$$
P(x):=\operatorname{div} C(x),
$$
then the vector field
$$
Gx+P(x)
=
Sx+\big((G-S)x+P(x)\big)
$$
admits an OHHD with potential $\Phi(x)=\frac12\langle Sx,x\rangle$. Moreover, for any anti-symmetric matrix of functions $\hat{C}=(\hat{c}_{ij})_{1 \leq i,j \leq d}$ satisfying $\mathrm{div}\hat{C}=0$,
$$
Sx+\big( (G-S)x  +P(x) +\hat{C}(x) Sx  \big)
$$
is an OHHD for the vector field $Gx+\mathrm{div}C(x) + \hat{C}(x)Sx$.

\end{itemize}
\end{example}

\section{Algebraic Riccati Equations}\label{section:algRiceq}
\begin{defn} \label{vectorizdefn}
Denote by $M_{d}(\mathbb{R})$ the set of all real $d \times d$ matrices. Let $X \in M_{d}(\mathbb{R})$ with columns $\mathbf{x}_1, \mathbf{x}_2, \dots, \mathbf{x}_d \in \mathbb{R}^d$. The vectorization of $X$, denoted by $\operatorname{Vec}(X)$, is the $d^2 \times 1$ column vector given by
\[
X = \begin{pmatrix}
| & | & & | \\
\mathbf{x}_1 & \mathbf{x}_2 & \dots & \mathbf{x}_d \\
| & | & & |
\end{pmatrix}
\quad \xrightarrow{\quad \text{\rm Vec} \quad} \quad
\text{\rm Vec}(X) = 
\begin{pmatrix}
\mathbf{x}_1 \\ \mathbf{x}_2 \\ \vdots \\ \mathbf{x}_d
\end{pmatrix} \in \mathbb{R}^{d^2}.
\]
Then, the map $\text{Vec}: M_{d}(\mathbb{R}) \to \mathbb{R}^{d^2}$ is indeed a vector space isomorphism.
\end{defn}

\begin{defn} \label{tensorproducde}
Let $A = (a_{ij})_{1 \leq i,j \leq d} \in M_{d}(\mathbb{R})$ and $B \in M_{d}(\mathbb{R})$. The Kronecker product of $A$ and $B$, denoted by $A \otimes B$, is the $d^2 \times d^2$ block matrix defined by:
\[
A \otimes B = \begin{pmatrix}
a_{11} B & a_{12} B & \dots & a_{1d} B \\
a_{21} B & a_{22} B & \dots & a_{2d} B \\
\vdots & \vdots & \ddots & \vdots \\
a_{d1} B & a_{d2} B & \dots & a_{dd} B
\end{pmatrix}.
\]
\end{defn}

\begin{proposition}{\cite[Lemma 4.3.1]{HJ91}} \label{tensform}
For $A, B, X \in M_n(\mathbb{R})$, the following identity holds:
\[ 
\operatorname{Vec}(AXB) = (B^T \otimes A) \operatorname{Vec}(X).
\]
\end{proposition}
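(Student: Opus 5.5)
Since the identity $\operatorname{Vec}(AXB) = (B^T \otimes A)\operatorname{Vec}(X)$ is linear in $X$ and both sides are computed column by column, I will compare the $k$-th block of length $d$ (writing $d=n$) on each side. Write $\mathbf{x}_1,\dots,\mathbf{x}_n$ for the columns of $X$ and $B=(b_{ij})$. The $k$-th column of $AXB$ is $AX\mathbf{b}_k$, where $\mathbf{b}_k$ is the $k$-th column of $B$. Since $X\mathbf{b}_k=\sum_{j=1}^n b_{jk}\mathbf{x}_j$, it equals
\[
(AXB)_{\cdot k}=\sum_{j=1}^n b_{jk}\,A\mathbf{x}_j .
\]
By Definition \ref{vectorizdefn}, this vector is exactly the $k$-th block of $\operatorname{Vec}(AXB)$.

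Next I compute the right-hand side. By Definition \ref{tensorproducde}, $B^T\otimes A$ is the block matrix whose $(k,j)$ block is $(B^T)_{kj}A=b_{jk}A$. Multiplying it against $\operatorname{Vec}(X)$, which is the stacked vector of blocks $\mathbf{x}_1,\dots,\mathbf{x}_n$, the $k$-th block of the product is $\sum_{j=1}^n b_{jk}A\mathbf{x}_j$. This coincides with the expression above for every $k=1,\dots,n$, and the identity follows.

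There is no real obstacle. The only care needed is the index bookkeeping: the transpose on $B$ is what makes the $(k,j)$ block equal $b_{jk}A$, matching the column convention of $\operatorname{Vec}$, where columns are stacked rather than rows. An alternative is to check the identity on the matrix units $X=E_{ij}$ and extend by linearity, but the column-block argument is more direct.
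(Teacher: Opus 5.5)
Your proposal is correct and matches the paper's proof: both compare, block by block, the $k$-th column $\sum_{j} b_{jk}A\mathbf{x}_j$ of $AXB$ with the $k$-th block of $(B^T\otimes A)\operatorname{Vec}(X)$, using that the $(k,j)$ block of $B^T\otimes A$ is $b_{jk}A$. The only difference is the choice of index letters.
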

\begin{proof}
Let $\mathbf{x}_k$ denote the $k$-th column of the matrix $X$, and let $b_{kj}$ denote the $(k,j)$-component of the matrix $B$. We compare the $j$-th block (of size $m \times 1$) of the vectors on both sides.\\ \\
\noindent \textbf{LHS:} Consider the $j$-th column of the matrix $AXB$. The $j$-th column of $XB$ is given by 
$$
\sum_{k=1}^d  b_{kj} \mathbf{x}_k.
$$
The $j$-th column of $AXB$ is:
\[
A \left( \sum_{k=1}^d  b_{kj} \mathbf{x}_k \right) = \sum_{k=1}^d b_{kj} (A\mathbf{x}_k).
\]

\noindent \textbf{RHS:}  The $(j, k)$-th block of $B^T \otimes A$ is given by $(B^T)_{jk} A = b_{kj} A$. Meanwhile, $\operatorname{Vec}(X)$ consists of $\begin{pmatrix}
\mathbf{x}_1 \\ \mathbf{x}_2 \\ \vdots \\ \mathbf{x}_d
\end{pmatrix}$. The $j$-th block of the matrix-vector product $(B^T \otimes A)\operatorname{Vec}(X)$ is obtained by
\[
\sum_{k=1}^d (b_{kj} A) \mathbf{x}_k = \sum_{k=1}^d b_{kj} (A\mathbf{x}_k).
\]
Since the $j$-th blocks of both sides are identical for all $j$, the assertion follows.
\end{proof}

\begin{proposition}{\cite[Lemma 4.2.10]{HJ91}} \label{producrulema} 
Let $A, B, C, D \in M_d(\mathbb{R})$. Then, the following identity holds:
\[
(A \otimes B)(C \otimes D) = (AC) \otimes (BD).
\]
\end{proposition}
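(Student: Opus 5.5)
The plan is to prove the mixed-product identity $(A\otimes B)(C\otimes D)=(AC)\otimes(BD)$ by direct block-matrix multiplication, using only Definition \ref{tensorproducde}. Write $A=(a_{ij})$ and $C=(c_{jk})$. By definition, $A\otimes B$ is the $d\times d$ array of $d\times d$ blocks whose $(i,j)$-block is $a_{ij}B$. Likewise, $C\otimes D$ has $(j,k)$-block $c_{jk}D$.

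Since both matrices are partitioned conformally into $d\times d$ blocks of size $d\times d$, block multiplication is legitimate. The $(i,k)$-block of the product is therefore
\[
\sum_{j=1}^d (a_{ij}B)(c_{jk}D)=\Big(\sum_{j=1}^d a_{ij}c_{jk}\Big)BD=(AC)_{ik}\,BD .
\]
Here we only use that scalars commute with matrices. By Definition \ref{tensorproducde}, the right-hand side is exactly the $(i,k)$-block of $(AC)\otimes(BD)$. Since all blocks agree, the identity follows.

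The only point needing care is the justification of block multiplication, which is a standard fact. To avoid relying on it, one can check entries directly by indexing rows of a Kronecker product by pairs. The entry of $A\otimes B$ in row $(i,p)$ and column $(j,q)$, i.e. row $(i-1)d+p$ and column $(j-1)d+q$, is $a_{ij}b_{pq}$. Summing over the middle index $(j,q)$ gives
\[
\sum_{j,q}a_{ij}b_{pq}c_{jk}d_{qr}=(AC)_{ik}(BD)_{pr},
\]
which is the $((i,p),(k,r))$ entry of $(AC)\otimes(BD)$. There is no real obstacle; the main task is to keep the block indexing consistent with the convention of Definition \ref{tensorproducde}.

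As an alternative consistent with Proposition \ref{tensform}, one could verify the identity on vectors $\operatorname{Vec}(X)$. We have $(A\otimes B)(C\otimes D)\operatorname{Vec}(X)=(A\otimes B)\operatorname{Vec}(DXC^T)=\operatorname{Vec}(BDXC^TA^T)=((AC)\otimes(BD))\operatorname{Vec}(X)$, applying Proposition \ref{tensform} twice. Since $\operatorname{Vec}$ is surjective, this also proves the claim, and I would present this as a one-line second proof.
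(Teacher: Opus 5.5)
Your block-multiplication argument is the paper's proof: compute the $(i,k)$-block of the product as $\sum_{j}(a_{ij}B)(c_{jk}D)=(AC)_{ik}\,BD$ and match it with the $(i,k)$-block of $(AC)\otimes(BD)$. Your entrywise check and the $\operatorname{Vec}$-based second proof are also correct, and the latter is not circular because Proposition \ref{tensform} is proved directly by comparing columns, without using the mixed-product property.
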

\begin{proof}
\textbf{(LHS)}:
Set $L := (A \otimes B)(C \otimes D)$. We regard $L$ as a block matrix where the $(i, k)$-th block, denoted by $L_{ik}$. Then,
$$
L_{ik} = \sum_{j=1}^{d} (\text{$(i,j)$-th block} \text{ of } A \otimes B) \times (\text{$(j,k)$-th block}  \text{ of } C \otimes D).
$$
The $(i, j)$-th block of $A \otimes B$ is $a_{ij}B$, and the $(j, k)$-th block of $C \otimes D$ is $c_{jk}D$. Thus,
$$
L_{ik} = \sum_{j=1}^{d} (a_{ij}B) (c_{jk}D).
$$
Since scalars $a_{ij}$ and $c_{jk}$ commute with matrices, we can rearrange the terms:
$$
L_{ik} = \sum_{j=1}^{d} a_{ij} c_{jk} (BD) = \left( \sum_{j=1}^{d} a_{ij} c_{jk} \right) BD.
$$
Thus, we have
$$
L_{ik} = (AC)_{ik} BD.
$$
\textbf{(RHS)}:
Now, consider the matrix $R := (AC) \otimes (BD)$. By the definition of the Kronecker product, the $(i, k)$-th block of $R$ is obtained by
\[
R_{ik} = (AC)_{ik} (BD).
\]
Thus, the assertion follows.
\end{proof}

\begin{theorem}{\cite[Corollary 4.4.7]{HJ91}}  \label{mainlapuin}
Let $A \in M_{d}(\mathbb{R})$ be symmetric and positive definite. Let $G \in M_d(\mathbb{R})$ and $\sigma(G) = \{ \lambda_1, \dots, \lambda_d \} \subset \mathbb{C}$ be the set of eigenvalues of $G$ counting multiplicities.  Assume that
\begin{equation} \label{eq:condition}
    \lambda_i + \lambda_j \neq 0 \quad \text{for all } 1 \leq i, j \leq d.
\end{equation}
Then there exists a unique solution $P$ satisfying the Lyapunov equation:
\begin{equation} \label{eqlyap}
    GP + PG^T = -2A.
\end{equation}
In particular, $P$ is given explicitly by:
\begin{equation} \label{vectorialmap}
\text{\rm Vec}(P) = - (I \otimes G + G \otimes I)^{-1} \text{\rm Vec}(2A).
\end{equation}
where   $\text{Vec}$ and $\otimes$ are defined as in Definitions \ref{vectorizdefn}, \ref{tensorproducde}, respectively.
Moreover, $P$ is symmetric and invertible.
\end{theorem}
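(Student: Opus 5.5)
The approach is to vectorize the Lyapunov equation \eqref{eqlyap} and reduce it to a linear system on $\mathbb{R}^{d^2}$.

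By Proposition \ref{tensform}, with $X=P$,
\[
\operatorname{Vec}(GP)=\operatorname{Vec}(GPI)=(I\otimes G)\operatorname{Vec}(P), \qquad \operatorname{Vec}(PG^T)=\operatorname{Vec}(IPG^T)=(G\otimes I)\operatorname{Vec}(P).
\]
Since $\operatorname{Vec}$ is a linear isomorphism (Definition \ref{vectorizdefn}), \eqref{eqlyap} is equivalent to
\[
\mathcal{K}\operatorname{Vec}(P)=-\operatorname{Vec}(2A), \qquad \mathcal{K}:=I\otimes G+G\otimes I.
\]
Existence, uniqueness and formula \eqref{vectorialmap} therefore all follow once $\mathcal{K}$ is shown to be invertible. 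This is the main step.

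I will show that the eigenvalues of $\mathcal{K}$ are exactly the numbers $\lambda_i+\lambda_j$. To do this, work over $\mathbb{C}$ and take a Schur decomposition $G=UTU^*$ with $U$ unitary and $T$ upper triangular, whose diagonal is $\lambda_1,\dots,\lambda_d$. The product rule of Proposition \ref{producrulema} holds verbatim for complex matrices, so
\[
\mathcal{K}=(U\otimes U)(I\otimes T+T\otimes I)(U\otimes U)^{*},
\]
where $U\otimes U$ is unitary because $(U\otimes U)(U^*\otimes U^*)=I\otimes I$. In the block structure of Definition \ref{tensorproducde}, $I\otimes T$ is block diagonal with upper triangular blocks $T$, and $T\otimes I$ is block upper triangular with diagonal blocks $\lambda_i I$. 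Hence $I\otimes T+T\otimes I$ is upper triangular with diagonal entries $\lambda_i+\lambda_j$. Condition \eqref{eq:condition} then gives $\det\mathcal{K}=\prod_{i,j}(\lambda_i+\lambda_j)\neq0$. Even though complex matrices are used in this step, $\mathcal{K}$ itself is real, so its inverse and the resulting $P$ are real.

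Symmetry of $P$ follows from uniqueness. Transposing \eqref{eqlyap} gives $GP^T+P^TG^T=-2A^T=-2A$, so $P^T$ also solves \eqref{eqlyap}, and uniqueness yields $P^T=P$.

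For invertibility, suppose $Pv=0$ for some $v\in\mathbb{C}^d$, and take the quadratic form of \eqref{eqlyap} at $v$:
\[
v^*GPv+v^*PG^Tv=-2v^*Av.
\]
The first term vanishes because $Pv=0$. For the second, $v^*P=(Pv)^*=0$ by symmetry of the real matrix $P$, so it vanishes too. Thus $v^*Av=0$, and positive definiteness of $A$ forces $v=0$, so $P$ is invertible.

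The only nontrivial ingredient is the spectral computation for $\mathcal{K}$; the rest is bookkeeping.
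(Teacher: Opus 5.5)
Your proposal is correct and follows essentially the same route as the paper: vectorization via Proposition \ref{tensform}, a Schur decomposition showing that $\mathcal{K}$ is unitarily similar to the upper triangular matrix $I\otimes T+T\otimes I$ with diagonal entries $\lambda_i+\lambda_j$, symmetry of $P$ from uniqueness after transposing, and invertibility by evaluating the quadratic form at a kernel vector and using positive definiteness of $A$. Your extra remarks (that the Kronecker product rule holds for complex matrices, that $P$ is real because $\mathcal{K}$ is real, and the use of $v\in\mathbb{C}^d$) are harmless refinements of the same argument.
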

\begin{proof}
We solve the matrix equation by transforming it into an equivalent linear system in the vector space $\mathbb{R}^{d^2}$. First, apply the vectorization operator to both sides of \eqref{eqlyap}:
\[
\text{Vec}(GP + PG^T) = \text{Vec}(-2A).
\]
By the linearity of the vectorization operator, we have:
\begin{equation*}
\text{Vec}(GP) + \text{Vec}(PG^T) = -\text{Vec}(2A).
\end{equation*}
Rewriting $GP$ as $GPI$ and $PG^T$ as $IPG^T$, we obtain from Proposition \ref{tensform}:
\begin{align*}
    \text{Vec}(GP) &= \text{Vec}(G P I) = (I \otimes G) \text{Vec}(P), \\
    \text{Vec}(PG^T) &= \text{Vec}(I P G^T) = (G \otimes I) \text{Vec}(P).
\end{align*}
Thus, we obtain
\[
\underbrace{(I \otimes G + G \otimes I)}_{=:\mathcal{K}} \text{Vec}(P) = -\text{Vec}(2A).
\]
To ensure a unique solution, $\mathcal{K}$ must be invertible. Using the Schur decomposition (see \cite[Theorem 2.3.1]{HJ13}), there exists a unitary matrix $U$ such that $G = U T U^*$, where $T$ is an upper triangular matrix with the eigenvalues $\lambda_i$ on its diagonal. We substitute $G = U T U^*$ directly into the expression for $\mathcal{K}$. Noting that $I = U I U^*$ and using Proposition \ref{producrulema}, 
\begin{align*}
\mathcal{K} &= (U I U^*) \otimes (U T U^*) + (U T U^*) \otimes (U I U^*) \\
&= (U \otimes U)(I \otimes T)(U^* \otimes U^*) + (U \otimes U)(T \otimes I)(U^* \otimes U^*) \\
&= (U \otimes U) \underbrace{\left[ (I \otimes T) + (T \otimes I) \right]}_{=:\mathcal{M}} (U \otimes U)^*.
\end{align*}
This shows that $\mathcal{K}$ is similar to $\mathcal{M} = I \otimes T + T \otimes I$. Let us examine the structure of $\mathcal{M}$ to determine its eigenvalues. Observe that
$$
\mathcal{M} = 
\begin{pmatrix}
T & 0 & \dots \\
0 & T & \dots \\
\vdots & \vdots & T
\end{pmatrix}
+
\begin{pmatrix}
\lambda_1 I & * & *\\
0 & \lambda_2 I & * \\
\vdots & \vdots & \lambda_n I
\end{pmatrix}
$$
Since the sum of two upper triangular matrices is also upper triangular, $\mathcal{M}$ is an upper triangular matrix of size $d^2 \times d^2$. Moreover,
$$
\text{diag}(\mathcal{M}) = \{ \lambda_j + \lambda_i\mid 1 \le i, j \le d \}.
$$
Thus, the spectrum of $\mathcal{K}$ is given by:
$$
\sigma(\mathcal{K}) = \{ \lambda_i + \lambda_j \mid 1 \le i, j \le d \}.
$$
By the hypothesis \eqref{eq:condition}, we have $\lambda_i + \lambda_j \neq 0$ for all pairs, which implies $0 \notin \sigma(\mathcal{K})$. Therefore, $\det(\mathcal{K}) \neq 0$, and $\mathcal{K}$ is invertible. Therefore, we obtain the unique explicit solution:
$$
\text{Vec}(P) = -\mathcal{K}^{-1} \text{Vec}(2A).
$$
Next, let us show that $P$ is a symmetric matrix.
First, observe that transposing the original equation $GP + PG^T = -2A$ yields
$$
P^T G^T + G P^T = -2A.
$$
By the uniqueness result, $P$ must coincide with its transpose $P^T$. Hence, $P$ is symmetric. \\
To prove invertibility of $P$, suppose that $P$ is not invertible. Then there exists a non-zero vector $v \in \mathbb{R}^d$ such that $Pv = 0$. Thus, we obtain
\begin{equation*} 
v^T (GP + PG^T) v = -2 v^T A v.
\end{equation*}
By the symmetry of $P$, the left hand side is
$$
v^T G \underbrace{(Pv)}_{=0} + \underbrace{(Pv)^T}_{=0} G^T v = 0.
$$
On the other hand, we have $-2 v^T A v < 0$, which leads to a contradiction.
Therefore, $P$ is invertible, as desired.
\end{proof}

\begin{theorem}\label{explicitconstruction} [Explicit construction]
Let $A \in M_{d}(\mathbb{R})$ be symmetric and positive definite.
Let $G \in M_{d}(\mathbb{R})$. Let $\sigma(G) = \{ \lambda_1, \dots, \lambda_d \} \subset \mathbb{C}$ be the set of eigenvalues of $G$ counting multiplicities. Assume that
\begin{equation*}
    \lambda_i + \lambda_j \neq 0 \quad \text{for all } 1 \leq i, j \leq d.
\end{equation*}
Then, there exists a $d\times d$ symmetric and invertible matrix $S$ such that \eqref{SOHHDlinalg1} holds. In particular, $S$ is explicitly given by
\[
S = (-P)^{-1},
\]
where $P$ is determined by the vectorized equation \eqref{vectorialmap}.
Moreover, 
$$
\mu=\exp\left( \langle Sx, x \rangle  \right) dx
$$
is an infinitesimally invariant measure for $\left(\frac12 \mathrm{trace}(A \nabla^2) + \langle Gx,\nabla \rangle,\; C_0^{\infty}(\mathbb{R}^d)\right)$ and
$$
\tilde Gx=Sx+(G-AS)x
$$
is in fact a SOHHD for $\tilde G x$.
\end{theorem}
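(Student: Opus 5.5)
We use Theorem \ref{mainlapuin} to produce the unique symmetric invertible $P$ with $GP+PG^T=-2A$, given by \eqref{vectorialmap}. We then set $S:=(-P)^{-1}$. Since $P$ is symmetric and invertible, $S$ is symmetric and invertible, and $P=-S^{-1}$. The rest of the argument checks \eqref{SOHHDlinalg1} for this $S$. After that, the measure and decomposition statements follow from Lemma \ref{lemma algebraic riccati}.

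\textbf{Riccati equation.} Substituting $P=-S^{-1}$ into the Lyapunov equation gives $-GS^{-1}-S^{-1}G^T=-2A$. We multiply this identity on the left and on the right by $S$, which is allowed because $S$ is invertible. This yields
\[
SG+G^TS=2SAS .
\]
The steps can be reversed, so the two equations are equivalent. This is the same manipulation as in the proof of Proposition \ref{propintegrep}(ii).

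\textbf{Trace condition.} We repeat the trace argument from Proposition \ref{propintegrep}(ii), which used only invertibility of $S$ and not definiteness. Multiplying the Riccati equation on the left by $S^{-1}$ gives $G+S^{-1}G^TS=2AS$. Taking traces and using $\mathrm{trace}(S^{-1}G^TS)=\mathrm{trace}(G^T)=\mathrm{trace}(G)$, we get $2\,\mathrm{trace}(AS)=2\,\mathrm{trace}(G)$. Hence $\mathrm{trace}(G-AS)=0$.

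\textbf{Conclusion.} With \eqref{SOHHDlinalg1} established, Lemma \ref{lemma algebraic riccati} gives two things. First, $\mu=e^{\langle x,Sx\rangle}dx$ is an infinitesimally invariant measure for $(L^{A,G},C_0^\infty(\mathbb{R}^d))$. Second, by its final equivalence, $\tilde Gx=Sx+(G-AS)x$ is an SOHHD.

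\textbf{Main obstacle.} The only real input is the invertibility of $P$, which Theorem \ref{mainlapuin} already supplies. Without it, $S$ could not be defined and the multiplication step would fail. Everything else is routine algebra.
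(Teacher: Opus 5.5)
Your proposal is correct and follows the paper's proof essentially step for step. You obtain $P$ from Theorem \ref{mainlapuin}, set $S:=(-P)^{-1}$, multiply the Lyapunov equation by $S$ on both sides to get $SG+G^TS=2SAS$, take traces in $G+S^{-1}G^TS=2AS$ to get the trace condition, and then invoke Lemma \ref{lemma algebraic riccati} for the invariance and SOHHD conclusions.
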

\begin{proof}
By Theorem \ref{mainlapuin}, there exists a unique symmetric and invertible matrix $P$ satisfying the Lyapunov equation:
\begin{equation*} 
GP + PG^T = -2A.
\end{equation*}
The explicit vectorized form of $P$ is given by \eqref{vectorialmap}.
Now, define
\[
S = (-P)^{-1}.
\]
This implies $P = -S^{-1}$. Substituting this into the above equation, we have:
\[
G(S^{-1}) + (S^{-1})G^T = 2A.
\]
Thus, we obtain
\[
S(GS^{-1})S + S(S^{-1}G^T)S = 2SAS,
\]
which simplifies to the algebraic Riccati equation:
\begin{equation*} 
SG + G^T S = 2SAS.
\end{equation*}
Multiplying by $S^{-1}$ from the left in the above, we obtain
\[
G+S^{-1}G^{T}S=2AS.
\]
Thus,
\[
\mathrm{trace}(G)+\mathrm{trace}\!\left(S^{-1}G^{T}S\right)=2\,\mathrm{trace}(AS).
\]
Since
\[
\mathrm{trace}\!\left(S^{-1}G^{T}S\right)
=\mathrm{trace}\!\left(G^{T}SS^{-1}\right)
=\mathrm{trace}(G^{T})
=\mathrm{trace}(G),
\]
we get
\begin{equation*}
\,2\mathrm{trace}(G)=\,2\mathrm{trace}(AS).
\end{equation*}
Hence, \eqref{SOHHDlinalg1} holds. The rest follows from Lemma \ref{lemma algebraic riccati}.
\end{proof}

\begin{cor}[uniqueness via stochastic analysis]
Let $A \in M_{d}(\mathbb{R})$ be symmetric and positive definite. Assume that $G \in M_d(\mathbb{R})$ is a Hurwitz matrix. Then, there exists a unique $d \times d$ symmetric matrix
solution $S$ to \eqref{SOHHDlinalg1}.
More precisely, the solution $S$ is real, symmetric and invertible, and has the following explicit form:
$$
S=\left(-\int_{0}^{\infty} e^{Gt}\,(2A)\,e^{G^T t}\,dt \right)^{-1}= (-P)^{-1},
$$
where $P$ is determined by the vectorized equation \eqref{vectorialmap}.
If $\tilde{S}$ is another symmetric matrix solution  to \eqref{SOHHDlinalg1}, then $S=\tilde{S}$.
\end{cor}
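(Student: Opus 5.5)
The plan has two parts: existence with the stated explicit form, obtained from the Lyapunov theory of Section \ref{section:algRiceq}, and uniqueness, obtained from the uniqueness of the infinitesimally invariant measure in Proposition \ref{propintegrep}(ii) combined with Lemma \ref{lemma algebraic riccati}.

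\emph{Existence and explicit form.} Since $G$ is Hurwitz, every eigenvalue $\lambda_i$ of $G$ satisfies $\mathrm{Re}\,\lambda_i<0$. Hence $\mathrm{Re}(\lambda_i+\lambda_j)<0$, so $\lambda_i+\lambda_j\neq 0$ for all $i,j$, and condition \eqref{eq:condition} holds. Theorem \ref{mainlapuin} then gives a unique solution $P$ of $GP+PG^T=-2A$. This $P$ is symmetric and invertible and is given by \eqref{vectorialmap}.

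By \cite[Theorem 4.6]{K02}, as used in the proof of Proposition \ref{propintegrep}(ii), the matrix $\int_0^\infty e^{Gt}(2A)e^{G^Tt}\,dt$ also solves this Lyapunov equation. By uniqueness it coincides with $P$, which identifies the two expressions for $S$. Theorem \ref{explicitconstruction} (or directly Proposition \ref{propintegrep}(ii)) shows that $S=(-P)^{-1}$ is real, symmetric and invertible and satisfies \eqref{SOHHDlinalg1}. In addition, $S$ is negative definite.

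\emph{Uniqueness.} Let $\tilde S$ be any symmetric solution of \eqref{SOHHDlinalg1}. By Lemma \ref{lemma algebraic riccati}, $\tilde\mu:=e^{\langle x,\tilde Sx\rangle}dx$ is an infinitesimally invariant measure for $(L^{A,G},C_0^\infty(\R^d))$. It is a positive, locally finite measure with continuous density, and $L^{A,G}f$ is bounded with compact support, so $L^{A,G}f\in L^1(\tilde\mu)$ and Definition \ref{definvaran} applies. Proposition \ref{propintegrep}(ii) states that $\mu=e^{\langle x,Sx\rangle}dx$ is the unique infinitesimally invariant measure up to a multiplicative constant. Hence $e^{\langle x,\tilde Sx\rangle}=c\,e^{\langle x,Sx\rangle}$ for a.e. $x$, with some $c>0$.

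Both densities are continuous, so the identity holds for every $x$. Evaluating at $x=0$ gives $c=1$. Taking logarithms gives $\langle x,\tilde Sx\rangle=\langle x,Sx\rangle$ for all $x\in\R^d$. Since both matrices are symmetric, polarization yields $\tilde S=S$.

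\emph{Main obstacle.} The only delicate point is making sure that the uniqueness statement in Proposition \ref{propintegrep}(ii) really covers every competitor $\tilde\mu$. In particular, it must cover competitors that are not finite, such as those arising from an indefinite $\tilde S$. The uniqueness there comes from \cite[Proposition 3.13, Theorem 3.15]{LT22} and is uniqueness among all infinitesimally invariant measures, not merely among finite ones, so no finiteness of $\tilde\mu$ is needed. A sanity check is consistent with this: $\tilde S=0$ satisfies the Riccati equation, but it would require $\mathrm{trace}(G)=0$. This is impossible for Hurwitz $G$, since $\mathrm{trace}(G)=\sum_i\mathrm{Re}\,\lambda_i<0$.
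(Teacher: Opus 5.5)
Your proposal is correct and follows essentially the same route as the paper. Existence and the explicit form come from Theorem \ref{mainlapuin}, Theorem \ref{explicitconstruction} and Proposition \ref{propintegrep}(ii), and uniqueness comes from Lemma \ref{lemma algebraic riccati} combined with the uniqueness of the infinitesimally invariant measure in Proposition \ref{propintegrep}(ii); your extra details (identifying the integral with $P$ via uniqueness of the Lyapunov solution, normalizing $c=1$ at $x=0$ and polarizing, and noting that the uniqueness from \cite{LT22} must cover non-finite competitors $e^{\langle x,\tilde Sx\rangle}dx$) are sound and only make explicit what the paper's short proof leaves implicit.
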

\begin{proof}
By Lemma \ref{lemma algebraic riccati}, Proposition \ref{propintegrep}, and Theorem \ref{explicitconstruction}, the existence of a solution to \eqref{SOHHDlinalg1} and its explicit formulas are established. To prove uniqueness, let $\tilde{S}$ be another symmetric matrix solution to \eqref{SOHHDlinalg1}. Then, by Lemma \ref{lemma algebraic riccati}, $e^{\langle \tilde{S}x, x \rangle}\,dx$
is an infinitesimally invariant measure for $(L^{A,G}, C_0^\infty(\mathbb{R}^d))$. By the uniqueness result in Proposition \ref{propintegrep}, there exists a constant $c>0$ such that
\[
c\,e^{\langle \tilde{S}x, x \rangle}=e^{\langle Sx, x \rangle}
\quad \text{for all } x \in \mathbb{R}^d.
\]
Therefore, \(S=\tilde{S}\).
\end{proof}
\begin{lemma}[Schur reordering]\label{reorderSchur}
Let $G$ be a real $d\times d$ matrix and $\Sigma\subset \sigma(G)$ be a complex conjugation closed subset of its spectrum (counting multiplicities). Then, there exists an orthogonal matrix $U$ such that $R:=U^T GU$ is in real Schur form (i.e. quasi upper triangular  with blocks of  size $1\times 1$ corresponding to real eigenvalues and blocks of  size $2\times 2$ corresponding to complex conjugate pairs of eigenvalues on the diagonal) and admits the following block partition 
\begin{eqnarray*}\label{blockformreorder}
R=\begin{pmatrix}R_{11}&R_{12}\\ 0&R_{22}\end{pmatrix},\qquad
\quad \sigma(R_{11})=\Sigma, \quad \sigma(R_{22})=\sigma(G)\setminus \Sigma,
\end{eqnarray*}
where $R_{11}$ and $R_{22}$ are square matrices.
\end{lemma}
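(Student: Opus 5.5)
We will prove the real Schur reordering lemma in two stages: first obtain some real Schur form of $G$, then move the diagonal blocks belonging to $\Sigma$ to the top-left by a finite sequence of orthogonal swaps of adjacent diagonal blocks.

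\textbf{Step 1: existence of a real Schur form.} We argue by induction on $d$. If $G$ has a real eigenvalue $\lambda$, take a unit real eigenvector $u$ and complete it to an orthonormal basis. In that basis $G$ has block form $\begin{pmatrix}\lambda&*\\0&G'\end{pmatrix}$, and we apply the induction hypothesis to $G'$.

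If instead $\lambda=a+ib$ with $b\neq 0$ and complex eigenvector $w=x+iy$, then $\mathrm{span}\{x,y\}$ is a two-dimensional real $G$-invariant subspace. Indeed, $Gx=ax-by$ and $Gy=bx+ay$, and $x,y$ are linearly independent since $b\neq0$. Taking an orthonormal basis of this plane, completed to an orthonormal basis of $\R^d$, gives the form $\begin{pmatrix}R_1&*\\0&G'\end{pmatrix}$ with $\sigma(R_1)=\{\lambda,\bar\lambda\}$. Again we induct on $G'$.

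Since orthogonal changes of basis compose, this yields an orthogonal $U_0$ with $U_0^TGU_0$ quasi upper triangular. The spectrum of a block triangular matrix is the union, with multiplicities, of the spectra of its diagonal blocks. Because $\Sigma$ is closed under conjugation, it is a union of spectra of entire diagonal blocks: real eigenvalues each give a $1\times1$ block, and conjugate pairs each give a $2\times2$ block.

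\textbf{Step 2: swapping two adjacent diagonal blocks.} This is the main step. Consider a $2\times2$ block submatrix
\[
\begin{pmatrix}R_a&X\\0&R_b\end{pmatrix}, \qquad R_a,R_b \text{ of sizes } p,q\in\{1,2\}.
\]
If $\sigma(R_a)\cap\sigma(R_b)=\emptyset$, then the Sylvester equation $R_aZ-ZR_b=X$ has a unique real solution $Z$. This follows by the Kronecker/vectorization argument of Theorem \ref{mainlapuin}: the operator $I\otimes R_a-R_b^T\otimes I$ has eigenvalues $\lambda_i-\mu_j\neq0$.

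Then the columns of $\begin{pmatrix}-Z\\ I_q\end{pmatrix}$ span a $q$-dimensional invariant subspace on which the block matrix acts like $R_b$. We take a QR factorization of this $(p+q)\times q$ matrix and complete it to an orthogonal $Q$. Then $Q^T\begin{pmatrix}R_a&X\\0&R_b\end{pmatrix}Q$ is block upper triangular, with a leading $q\times q$ block similar to $R_b$ and a trailing $p\times p$ block similar to $R_a$.

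If a new diagonal block is $2\times2$ but not in standard form, that does not matter for the lemma. Only quasi upper triangularity and the spectra of the blocks matter, and a $2\times2$ block with a nonreal conjugate pair of eigenvalues is admissible.

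Embedding $Q$ into the identity on the remaining coordinates, the conjugation affects only the corresponding rows and block columns of the off-diagonal part. It therefore preserves the quasi-triangular structure of the whole matrix.

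If $\sigma(R_a)=\sigma(R_b)$, the two blocks carry the same eigenvalues. Multiplicities are counted in $\Sigma$, so we may simply relabel which block counts as belonging to $\Sigma$, and no swap is needed. Blocks with partially overlapping spectra cannot occur: a $2\times2$ block has a nonreal conjugate pair, while a $1\times1$ block is real.

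\textbf{Step 3: bubble sort.} We repeatedly swap adjacent blocks in which a non-$\Sigma$ block precedes a $\Sigma$ block, as in bubble sort. This terminates after finitely many swaps, after which all $\Sigma$-blocks come first. Taking $U$ as the product of $U_0$ with all the embedded orthogonal $Q$'s, $R=U^TGU$ has the block partition
\[
R=\begin{pmatrix}R_{11}&R_{12}\\ 0&R_{22}\end{pmatrix},
\]
where $R_{11}$ collects the leading $\Sigma$-blocks. Since each swap moves the eigenvalues together with their blocks, $\sigma(R_{11})=\Sigma$ and $\sigma(R_{22})=\sigma(G)\setminus\Sigma$.

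The main obstacle is Step 2: checking that the orthogonal swap keeps the matrix real and quasi-triangular and exchanges the spectra, and handling the bookkeeping of equal spectra. If a direct argument is preferred, an alternative is to first build the real invariant subspace $\mathcal V$ associated with $\Sigma$ as the real part of the sum of generalized eigenspaces, which is conjugation invariant and hence real. One then takes an orthonormal basis of $\mathcal V$ followed by one of $\mathcal V^\perp$, and applies Step 1 separately within each part.
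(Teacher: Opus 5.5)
Your proof is correct and takes the paper's route. The paper's proof only cites the existence of a real Schur form (Golub--Van Loan, Theorem 7.4.1) and Bai--Demmel swapping of adjacent diagonal blocks; your Steps 1--3 supply exactly those details: the inductive real Schur form, the Sylvester equation $R_aZ-ZR_b=X$ followed by a QR factorization of $\begin{pmatrix}-Z\\ I_q\end{pmatrix}$, and bubble sort with relabeling of blocks with equal spectra. Your closing alternative, however, fails as stated when $\Sigma$ contains an eigenvalue with less than its full algebraic multiplicity, since the sum of full generalized eigenspaces is then too large; your main argument does not rely on it.
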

\begin{proof}
Direct consequence of the real Schur form in e.g. \cite[Theorem 7.4.1]{GL13}  and diagonal block swapping as in \cite{BD93} (see also \cite[Section 7.6.2]{GL13}).
\end{proof}\\
In order to show that a symmetric solution to the algebraic Riccati equation in the following theorem exists, we adapt and generalize, i.e. streamline a method that was used in \cite[Section 4.]{Liu22} for $A=id$.
\begin{theorem}\label{sol:algRicgen}
Let $A=(a_{ij})_{1\le i,j\le d}$ and  $G=(g_{ij})_{1\le i,j\le d}$ be  matrices of real numbers, and $A$ be symmetric and positive definite. Then there exists a symmetric matrix of real numbers $S=(s_{ij})_{1\le i,j\le d}$, which solves \eqref{SOHHDlinalg1}.
\end{theorem}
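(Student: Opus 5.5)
The plan is to reduce to Theorem \ref{explicitconstruction} on an invariant subspace where its spectral hypothesis holds. I will put the rest of the spectrum into a block on which $S$ vanishes.

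\emph{Choice of $\Sigma$.} Let $\sigma(G)$ be the spectrum of $G$ counted with multiplicities. I build a sub-multiset $\Sigma$ in two steps.
\begin{itemize}
\item[(1)] Put all purely imaginary eigenvalues (including $0$) into $\Sigma$.
\item[(2)] Among the eigenvalues with nonzero real part, repeatedly remove a pair $\{\lambda,-\lambda\}$ with $\mathrm{Re}\,\lambda>0$ and $-\lambda\in\sigma(G)$, respecting multiplicities, and put it into $\Sigma$. Whenever such a pair is removed with $\lambda\notin\mathbb R$, also remove and add the conjugate pair $\{\bar\lambda,-\bar\lambda\}$.
\end{itemize}
Since $G$ is real, $\lambda$ and $\bar\lambda$ have the same multiplicity, and so do $-\lambda$ and $-\bar\lambda$. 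Hence this matching can be done conjugation-symmetrically, and $\Sigma$ is closed under complex conjugation.

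Two properties of $\Sigma$ follow.
\begin{itemize}
\item The sum of the elements of $\Sigma$ is $0$. The purely imaginary part is real, being conjugation closed, and purely imaginary, so it is $0$. The matched pairs cancel.
\item The complement $\sigma(G)\setminus\Sigma$ contains no $0$ and no purely imaginary numbers. It also contains no $\lambda,\mu$ with $\lambda+\mu=0$, since such a pair would have been matched. So it satisfies condition \eqref{eq:condition}, including the case $i=j$.
\end{itemize}
Making this matching argument precise with multiplicities is the step I expect to need the most care. Once the multiset bookkeeping is written out, it is routine.

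\emph{Block reduction.} Apply Lemma \ref{reorderSchur} to get an orthogonal $U$ with
\[
R=U^TGU=\begin{pmatrix}R_{11}&R_{12}\\ 0&R_{22}\end{pmatrix},\qquad \sigma(R_{11})=\Sigma.
\]
Set $\tilde A:=U^TAU$, which is symmetric positive definite, and let $\tilde A_{22}$ be its lower-right block, also positive definite. Make the ansatz
\[
S'=\begin{pmatrix}0&0\\0&S_{22}\end{pmatrix},\qquad S:=US'U^T.
\]
A direct block computation, using that the lower-left block of $R$ is zero, gives
\[
S'R+R^TS'=\begin{pmatrix}0&0\\0&S_{22}R_{22}+R_{22}^TS_{22}\end{pmatrix},\qquad
2S'\tilde AS'=\begin{pmatrix}0&0\\0&2S_{22}\tilde A_{22}S_{22}\end{pmatrix}.
\]
Conjugating by $U$ shows that $SG+G^TS=2SAS$ is equivalent to $S'R+R^TS'=2S'\tilde AS'$. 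Likewise $\mathrm{trace}(AS)=\mathrm{trace}(\tilde A S')=\mathrm{trace}(\tilde A_{22}S_{22})$ and $\mathrm{trace}(G)=\mathrm{trace}(R_{11})+\mathrm{trace}(R_{22})=\mathrm{trace}(R_{22})$, because the sum of $\Sigma$ is zero.

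\emph{Conclusion.} It therefore suffices to find a symmetric $S_{22}$ with
\[
S_{22}R_{22}+R_{22}^TS_{22}=2S_{22}\tilde A_{22}S_{22},\qquad \mathrm{trace}(\tilde A_{22}S_{22})=\mathrm{trace}(R_{22}).
\]
This is exactly \eqref{SOHHDlinalg1} for the pair $(R_{22},\tilde A_{22})$. Since $\sigma(R_{22})=\sigma(G)\setminus\Sigma$ satisfies \eqref{eq:condition}, Theorem \ref{explicitconstruction} supplies such an $S_{22}$, namely $S_{22}=(-P_{22})^{-1}$ with $P_{22}$ from \eqref{vectorialmap}. In the degenerate case $\Sigma=\sigma(G)$, the block $R_{22}$ is empty, $\mathrm{trace}(G)=0$, and $S=0$ works, as in Theorem \ref{allGOHHD}(i). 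The resulting $S=US'U^T$ is real and symmetric and solves \eqref{SOHHDlinalg1}.
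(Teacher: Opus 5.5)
Your proof is correct and follows the paper's argument: the same spectral splitting into a zero-sum, conjugation-closed part (containing zero, the purely imaginary eigenvalues and the matched $\pm$-pairs), the Schur reordering of Lemma \ref{reorderSchur}, Theorem \ref{explicitconstruction} on $R_{22}$, and zero-padding. The only difference is that you skip the paper's Claim 1 (reduction to $A=\mathrm{id}$ via $\sqrt{A}$) and instead carry $A$ through as $\tilde A=U^TAU$. This works because $S'\tilde AS'$ and $\mathrm{trace}(\tilde AS')$ only involve the positive definite block $\tilde A_{22}$.
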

\begin{proof}
{\bf Claim 1:} It is enough to show the statement for $A=id$. \\
Indeed, assume that \eqref{SOHHDlinalg1} holds for $A=id$ and let $G$ be arbitrarily given. Since $A$ is symmetric and positive definite, there exists a symmetric positive definite matrix $\sqrt{A}$ with $A=\sqrt{A}\sqrt{A}$. Let $\widehat{G}:=(\sqrt{A})^{-1}G\sqrt{A}$ and note that $(\sqrt{A})^{-1}$ is also symmetric. Since by assumption the statement is true for $A=id$, there exists a symmetric matrix $\widehat{S}$ of real numbers with 
\begin{eqnarray}\label{hat id}
\widehat{S}\widehat{G}+\widehat{G}^T\widehat{S}=2\widehat{S}^2\quad \text{and}\quad  \mathrm{trace}(\widehat{G}-\widehat{S})=0. 
\end{eqnarray}
Let $S:=(\sqrt{A})^{-1} \widehat{S} (\sqrt{A})^{-1}$. Then, $S$ is symmetric. Inserting the expression for $\widehat{G}$ in the first equation of \eqref{hat id}, and multiplying it from the left and from the right with $(\sqrt{A})^{-1}$, one can see that $S$ satisfies $SG+G^TS=2SAS$. Moreover, using the formula $\mathrm{trace}(CD)=\mathrm{trace}(DC)$, we see that $\mathrm{trace}(\widehat{G})=\mathrm{trace}(G)$ and $\mathrm{trace}(AS)=\mathrm{trace}(\widehat{S})$, hence
\[
\mathrm{trace}(G-AS)=\mathrm{trace}(\widehat{G}-\widehat{S})=0
\]
as desired.\\
{\bf Claim 2:} The statement holds for $A=id$.\\
Given the matrix $G$, we first formulate the Schur reordering that will be needed for the proof. We split the spectrum $\sigma(G)$ (counting multiplicities) in two disjoint subsets $\sigma_0(G), \sigma_1(G)$, where $\sigma_1(G)$ is complex conjugation closed and contains no pair summing up to zero (in the sense of Theorem \ref{explicitconstruction}), and $\sigma_0(G)$ is complex conjugation closed and contains $\pm$-pairs, i.e. pairs that sum up to zero (and possibly zero eigenvalues). Then $\sum_{\lambda\in \sigma_0(G)}\lambda=0$, hence
\[
\mathrm{trace}(G)=\sum_{\lambda\in \sigma(G)}\lambda=\sum_{\lambda\in \sigma_1(G)}\lambda.
\]
Now, we apply Lemma \ref{reorderSchur} with $\Sigma=\sigma_0(G)$. Thus  for some orthogonal $U$, we get that
\[
R=U^T G U=\begin{pmatrix}R_{11}&R_{12}\\0&R_{22}\end{pmatrix},\quad
\sigma(R_{11})=\sigma_0(G), \quad \sigma(R_{22})= \sigma_1(G).
\]
By construction, $\sigma(R_{22})\cap(-\sigma(R_{22}))=\sigma_1(G)\cap(-\sigma_1(G))=\varnothing$ and
\[
{\rm {trace}}(R_{22})=\sum_{\lambda\in\sigma_1(G)}\lambda={\rm {trace}}(G).
\]
By Theorem \ref{explicitconstruction} applied to $R_{22}$, we obtain a real symmetric matrix $S_2$ of the same size as $R_{22}$, such that
\[
S_2R_{22}+R_{22}^T S_2=2S_2^2,
\qquad
\mathrm{trace}(S_2)=\mathrm{trace}(R_{22}).
\]
Define the block matrix
\[
\widehat S := \begin{pmatrix}0&0\\ 0&S_2\end{pmatrix},
\]
where $\widehat S$ has the same block partition form as $R$. Then direct multiplication gives
\[
\widehat S R + R^T\widehat S = 2\widehat S^{2}.
\]
Also
\[
\mathrm{trace}(\widehat S)=\mathrm{trace}(S_2)=\mathrm{trace}(R_{22})=\mathrm{trace}(G).
\]
Finally set $S:=U\widehat S U^T$. Then $S$ is symmetric and noting that $G=URU^T$, we see that $S$ satisfies \eqref{SOHHDlinalg1} with $A=id$.
\end{proof}

\section{Appendix}

\subsection{Examples of solutions to algebraic Riccati equations}

\begin{example}
Consider the upper triangular matrix
\[
G=\begin{pmatrix}
1&2&3\\
0&-2&4\\
0&0&-5
\end{pmatrix}.
\]
Its spectrum is $\sigma(G)=\{1,-2,-5\}$, and hence every pairwise sum of eigenvalues is not zero. Since $G$ has a positive eigenvalue, the integral representation  (Proposition \ref{propintegrep}) based on
$$
-\int_0^\infty e^{Gt}(2I)e^{G^Tt}\,dt
$$
does not converge. Nevertheless, \eqref{SOHHDlinalg1} can be explicitly solved by linear algebra. We compute $P$ from the Lyapunov equation
\[
GP+PG^T=-2I.
\]
The two Kronecker products are explicitly
\[
I\otimes G=
\begin{pmatrix}
1&2&3&0&0&0&0&0&0\\
0&-2&4&0&0&0&0&0&0\\
0&0&-5&0&0&0&0&0&0\\
0&0&0&1&2&3&0&0&0\\
0&0&0&0&-2&4&0&0&0\\
0&0&0&0&0&-5&0&0&0\\
0&0&0&0&0&0&1&2&3\\
0&0&0&0&0&0&0&-2&4\\
0&0&0&0&0&0&0&0&-5
\end{pmatrix},
\]
\[
G\otimes I=
\begin{pmatrix}
1&0&0&2&0&0&3&0&0\\
0&1&0&0&2&0&0&3&0\\
0&0&1&0&0&2&0&0&3\\
0&0&0&-2&0&0&4&0&0\\
0&0&0&0&-2&0&0&4&0\\
0&0&0&0&0&-2&0&0&4\\
0&0&0&0&0&0&-5&0&0\\
0&0&0&0&0&0&0&-5&0\\
0&0&0&0&0&0&0&0&-5
\end{pmatrix}.
\]
Hence the $9\times 9$ coefficient matrix $\mathcal K:=I\otimes G+G\otimes I$ is
\[
\mathcal{K}=
\scriptsize
\begin{pmatrix}
 2 & 2 & 3 & 2 & 0 & 0 & 3 & 0 & 0 \\
 0 & -1 & 4 & 0 & 2 & 0 & 0 & 3 & 0 \\
 0 & 0 & -4 & 0 & 0 & 2 & 0 & 0 & 3 \\
 0 & 0 & 0 & -1 & 2 & 3 & 4 & 0 & 0 \\
 0 & 0 & 0 & 0 & -4 & 4 & 0 & 4 & 0 \\
 0 & 0 & 0 & 0 & 0 & -7 & 0 & 0 & 4 \\
 0 & 0 & 0 & 0 & 0 & 0 & -4 & 2 & 3 \\
 0 & 0 & 0 & 0 & 0 & 0 & 0 & -7 & 4 \\
 0 & 0 & 0 & 0 & 0 & 0 & 0 & 0 & -10
\end{pmatrix}.
\]
Since $\mathcal K$ is upper triangular, the linear system
\[
\mathcal K\,\operatorname{Vec}(P)=-2\,\operatorname{Vec}(I)=(-2,0,0,0,-2,0,0,0,-2)^T
\]
can be solved by back-substitution. In particular, one obtains
\[
\operatorname{Vec}(P)
=\frac1{140}\,(-963,\,368,\,29,\,368,\,102,\,16,\,29,\,16,\,28)^T.
\]
Thus,
\[
P=\frac1{140}\begin{pmatrix}
-963&368&29\\
368&102&16\\
29&16&28
\end{pmatrix}.
\]
Defining $S:=(-P)^{-1}$, we obtain
\[
S=\frac1{603995}\begin{pmatrix}
36400&-137760&41020\\
-137760&-389270&365120\\
41020&365120&-3271100
\end{pmatrix}.
\]
One can verify that $S$ satisfies \eqref{SOHHDlinalg1}.
\end{example}

\begin{example}
Consider
\[
G=\begin{pmatrix}-2&1\\4&-2\end{pmatrix}.
\]
Then $\operatorname{tr}(G)=-4\neq0$ and $\det(G)=0$, so $G$ is singular. Its eigenvalues are $\{0,-4\}$.  Applying \eqref{SOHHDlin7} yields
$$
s_{11}=\frac{-4}{25}\big((-2)(-4)-4(-3)\big)=-\frac{16}{5},\qquad
s_{12}=\frac{-4}{25}\big(4(-4)+(-2)(-3)\big)=\frac{8}{5},
$$
and hence $s_{22}=a+d-s_{11}=-4+\frac{16}{5}=-\frac{4}{5}$. Therefore
\[
S=\frac15\begin{pmatrix}-16&8\\8&-4\end{pmatrix},
\]
which indeed satisfies \eqref{SOHHDlinalg1}. On the other hand, let us attempt the vectorization (Lyapunov) method to solve $GP+PG^T=-2I$ with
$P=\begin{pmatrix}x&y\\y&z\end{pmatrix}$. Solving the equation gives
\[
-4x+2y=-2,\qquad 4x-4y+z=0,\qquad 8y-4z=-2.
\]
From the first equation $y=2x-1$, and substituting into the second yields $z=4x-4$. Plugging these into the third equation leads to
\[
4(2x-1)-2(4x-4)=-1
\]
which is a contradiction. Hence the Lyapunov equation has no solution in this case, and the vectorization method fails. Moreover, since $G$ has a zero eigenvalue, the integral representation (Proposition \ref{propintegrep}) based on
\[
-\int_0^\infty e^{Gt}(2I)e^{G^Tt}\,dt
\]
does not converge.
\end{example}

\begin{example}\label{2dimensionac}
Consider
\[
G=\begin{pmatrix}0&-4\\[2pt]4&0\end{pmatrix}.
\]
The eigenvalues of $G$ are
$\{4i,-4i\}$.  Since $a+d=0$, by Proposition~\ref{linearSOHHD}, we have that
$$
S=0
$$
satisfies \eqref{SOHHDlinalg1}. On the other hand, let us attempt the vectorization (Lyapunov) method to solve $GP+PG^T=-2I$ with
$P=\begin{pmatrix}x&y\\y&z\end{pmatrix}$. Using $G^T=-G$, the equation becomes $GP-PG=-2I$. A direct
calculation gives
\[
GP-PG=\begin{pmatrix}-8y&4x-4z\\[2pt]4x-4z&8y\end{pmatrix}=\begin{pmatrix}-2&0\\[2pt]0&-2\end{pmatrix}.
\]
Thus, $-8y=-2$ and $8y=-2$, which is a contradiction.
Hence the vectorization method fails.
Moreover, 
$e^{Gt}(2I)e^{G^Tt}=2I$. Consequently, the integral representation (Proposition~\ref{propintegrep})
based on
\[
-\int_0^\infty e^{Gt}(2I)e^{G^Tt}\,dt
\]
does not converge.
Finally, note that $\langle Gx,x\rangle=0$ for all $x\in\mathbb{R}^2$. Hence, by \cite[Corollary 3.41]{LST22},
the Markov process associated with $\bigl(L^{G},C_0^{\infty}(\mathbb{R}^2)\bigr)$ is recurrent. Therefore,
$dx$ is the unique infinitesimally invariant measure for $\big(L^{G},C_0^{\infty}(\mathbb{R}^2)\big)$
(up to a multiplicative constant) by \cite[Theorem 3.15]{LT22}. By Theorem \ref{nonexofOHHD}(ii), with $\widetilde{\mathbf{G}}=G$, $\Phi=0$, and $\mathbf{B}=G$,
it follows that $Gx=0+Gx$ is the unique orthogonal Helmholtz--Hodge decomposition of $Gx$.
\end{example}

\begin{example}
Consider the matrices
\[
G = \begin{pmatrix} 0 & 1 \\ -2 & -3 \end{pmatrix}, \qquad A = \begin{pmatrix} 2 & 1 \\ 1 & 1 \end{pmatrix}.
\]
The spectrum of $G$ is $\sigma(G) = \{-1, -2\}$, which implies that $G$ is Hurwitz. The matrix $A$ is symmetric positive definite with $\det(A)=1$. Let us find a symmetric matrix $S$ satisfying
\begin{equation*} 
SG + G^T S = 2SAS, \qquad \mathrm{trace}(AS) = \mathrm{trace}(G).
\end{equation*}
By setting $P = (-S)^{-1}$, let us find a symmetric matrix $P$ satisfying
\begin{equation*}
GP + PG^T = -2A.
\end{equation*}
We present three distinct approaches to find $S$, which all gives the same answers.  \\ \\
\noindent \textbf{Method 1: Explicit Formula.}
Note that employing the notation from Theorem \ref{expliformuouoper}, we have $a=0, b=1, c=-2, d=-3$ for $G$ and $\alpha=2, \beta=1, \gamma=1$ for $A$. Then, we have
\begin{align*}
&\alpha \gamma - \beta^2=1, \qquad a+d=-3 \\
&\gamma b - \alpha c + \beta(a-d) = 1(1) - 2(-2) + 1(0 - (-3)) = 1 + 4 + 3 = 8 \neq 0.
\end{align*}

Thus, by Theorem \ref{expliformuouoper}(i),
\begin{align*}
s_{11} &= -\frac{3}{73} \Big( \gamma a(a+d) + \alpha c^2 - \gamma bc - 2\beta ac \Big) = -\frac{3}{73}(10) = -\frac{30}{73}, \\
s_{12} &= -\frac{3}{73} \Big( \alpha cd + \gamma ab - 2\beta ad \Big) = -\frac{3}{73}(12) = -\frac{36}{73}, \\
s_{22} &= -\frac{3}{73} \Big( \alpha d(a+d) + \gamma b^2 - \alpha bc - 2\beta bd \Big) = -\frac{3}{73}(29) = -\frac{87}{73}.
\end{align*}
Thus, the matrix $S$ is given by:
\[
S = -\frac{3}{73} \begin{pmatrix} 10 & 12 \\ 12 & 29 \end{pmatrix}.
\]

\vspace{1em}
\noindent \textbf{Method 2: Vectorization.}
First, we compute the Kronecker products. For $I \otimes G$:
\[
I \otimes G = \begin{pmatrix} 1 & 0 \\ 0 & 1 \end{pmatrix} \otimes G = 
\begin{pmatrix} 
0 & 1 & 0 & 0 \\ 
-2 & -3 & 0 & 0 \\ 
0 & 0 & 0 & 1 \\ 
0 & 0 & -2 & -3 
\end{pmatrix}.
\]
For $G \otimes I$:
\[
G \otimes I = \begin{pmatrix} 0 & 1 \\ -2 & -3 \end{pmatrix} \otimes I = 
\begin{pmatrix} 
0 & 0 & 1 & 0 \\ 
0 & 0 & 0 & 1 \\ 
-2 & 0 & -3 & 0 \\ 
0 & -2 & 0 & -3 
\end{pmatrix}.
\]
Summing these gives the system matrix $\mathcal{K}$:
\[
\mathcal{K} := (I \otimes G) + (G \otimes I) = 
\begin{pmatrix} 
0 & 1 & 1 & 0 \\ 
-2 & -3 & 0 & 1 \\ 
-2 & 0 & -3 & 1 \\ 
0 & -2 & -2 & -6 
\end{pmatrix}.
\]
Let $\operatorname{Vec}(P) = (p_{11}, p_{21}, p_{12}, p_{22})^T$. Since $P$ is symmetric, $p_{21} = p_{12}$. The vectorized equation $\mathcal{K}\operatorname{Vec}(P) = \operatorname{Vec}(-2A)$ becomes:
\[
\begin{pmatrix} 0 & 1 & 1 & 0 \\ -2 & -3 & 0 & 1 \\ -2 & 0 & -3 & 1 \\ 0 & -2 & -2 & -6 \end{pmatrix}
\begin{pmatrix} p_{11} \\ p_{12} \\ p_{12} \\ p_{22} \end{pmatrix}
=
\begin{pmatrix} -4 \\ -2 \\ -2 \\ -2 \end{pmatrix}.
\]
Then,
\[
P = \begin{pmatrix} p_{11} & p_{12} \\ p_{21} & p_{22} \end{pmatrix} = \begin{pmatrix} 29/6 & -2 \\ -2 & 5/3 \end{pmatrix} = \frac{1}{6}\begin{pmatrix} 29 & -12 \\ -12 & 10 \end{pmatrix}.
\]
Recovering $S$:
\[
S = -P^{-1} = - 6 \left( \begin{pmatrix} 29 & -12 \\ -12 & 10 \end{pmatrix} \right)^{-1} 
= - 6 \cdot \frac{1}{146} \begin{pmatrix} 10 & 12 \\ 12 & 29 \end{pmatrix} 
= -\frac{3}{73} \begin{pmatrix} 10 & 12 \\ 12 & 29 \end{pmatrix}.
\]

\vspace{1em}
\noindent \textbf{Method 3: Integral Representation.}
Since $G$ is Hurwitz, the unique solution $P$ to the Lyapunov equation $GP+PG^T = -2A$ has the following integral formula
\[
P = \int_0^\infty e^{Gt}(2A)e^{G^T t} \, dt.
\]
First, we compute the matrix exponential $e^{Gt}$. We diagonalize $G$ as $G=VDV^{-1}$:
\[
V = \begin{pmatrix} 1 & 1 \\ -1 & -2 \end{pmatrix}, \quad D = \begin{pmatrix} -1 & 0 \\ 0 & -2 \end{pmatrix}, \quad V^{-1} = \begin{pmatrix} 2 & 1 \\ -1 & -1 \end{pmatrix}.
\]
\[
e^{Gt} = V e^{Dt} V^{-1} = \begin{pmatrix} 1 & 1 \\ -1 & -2 \end{pmatrix} \begin{pmatrix} e^{-t} & 0 \\ 0 & e^{-2t} \end{pmatrix} \begin{pmatrix} 2 & 1 \\ -1 & -1 \end{pmatrix} = \begin{pmatrix} 2e^{-t}-e^{-2t} & e^{-t}-e^{-2t} \\ -2e^{-t}+2e^{-2t} & -e^{-t}+2e^{-2t} \end{pmatrix}.
\]
Then,
\begin{align*}
&M(t) = e^{Gt}(2A)e^{G^T t}=\begin{pmatrix}  m_{11}(t)& m_{12}(t) \\ m_{12}(t) & m_{22}(t) \end{pmatrix}\\
&=\begin{pmatrix}  26e^{-2t}-32e^{-3t}+10e^{-4t}& -26e^{-2t}+48e^{-3t}-20e^{-4t} \\ -26e^{-2t}+48e^{-3t}-20e^{-4t} & 26e^{-2t}-64e^{-3t}+40e^{-4t} \end{pmatrix}
\end{align*}
We integrate each component from $0$ to $\infty$:
\[
\int_0^\infty m_{11}(t) dt = \left[ -13e^{-2t} + \frac{32}{3}e^{-3t} - \frac{5}{2}e^{-4t} \right]_0^\infty = (0) - (-13 + \frac{32}{3} - \frac{5}{2}) = \frac{29}{6}
\]
\[
\int_0^\infty m_{12}(t) dt = \int_0^\infty m_{21}(t) dt = \left[ 13e^{-2t} - 16e^{-3t} + 5e^{-4t} \right]_0^\infty = (0) - (13 - 16 + 5) = -2
\]
\[
\int_0^\infty m_{22}(t) dt = \left[ -13e^{-2t} + \frac{64}{3}e^{-3t} - 10e^{-4t} \right]_0^\infty = (0) - (-13 + \frac{64}{3} - 10) = \frac{5}{3}.
\]
Thus, the integral yields:
\[
P = \begin{pmatrix} p_{11} & p_{12} \\ p_{21} & p_{22} \end{pmatrix} = \frac{1}{6}\begin{pmatrix} 29 & -12 \\ -12 & 10 \end{pmatrix}.
\]
Finally, we recover $S$:
\[
S = -P^{-1} = -\frac{3}{73} \begin{pmatrix} 10 & 12 \\ 12 & 29 \end{pmatrix}.
\]
All three methods yield the same result. Finally, we verify the trace constraint $\mathrm{trace}(AS)=\mathrm{trace}(G)$:
\[
\mathrm{trace}(AS) = \mathrm{trace}\left( -\frac{3}{73} \begin{pmatrix} 2 & 1 \\ 1 & 1 \end{pmatrix} \begin{pmatrix} 10 & 12 \\ 12 & 29 \end{pmatrix} \right) 
=-\frac{3}{73} \mathrm{trace}\begin{pmatrix} 32 & 53 \\ 22 & 41 \end{pmatrix} = -\frac{3}{73}(73) = -3 = \mathrm{trace}(G),
\]
as desired.
\end{example}

\subsection{Alternative proof of existence of an explicit  symmetric solution $S$ to \eqref{SOHHDlin5} in case $d=2$}
Let up to this end $d=2$. In order to show that there is a solution to \eqref{SOHHDlin5}, we first start with the case $A=id$. Let
\begin{eqnarray*}
G=\begin{pmatrix} g_{11}  & g_{12} \\  g_{21}  & g_{22} \end{pmatrix}=\begin{pmatrix} a & b \\  c  & d \end{pmatrix}.
\end{eqnarray*}
Then 
\begin{eqnarray*}
H=G-S=\begin{pmatrix} a-s_{11}  &b- s_{12} \\c-  s_{21}  & d- s_{22} \end{pmatrix},\quad S=\begin{pmatrix}s_{11}&s_{12}\\ s_{12}&s_{22}\end{pmatrix}.
\end{eqnarray*}
Using the symmetry of $S$ and $h_{11}+h_{22}=(a-s_{11}) + (d-s_{22})=0$ for the second to last equations right below, \eqref{SOHHDlin5} is equivalent to 
\begin{align}
& s_{11}(a-s_{11})+s_{12}(c-s_{12})=0, \label{eqq1}\\
& s_{12}(b-s_{12})+s_{22}(d-s_{22})=0, \label{eqq2}\\
& s_{11}(b-s_{12})+s_{22}(c-s_{12})=0, \label{eqq3}\\
& s_{22}=a+d-s_{11}. \label{eqq4}
\end{align}
Now there are three cases:\\[3pt]
\textbf{1)} $b=c$. in this case $G$ is symmetric. Choosing $S:=G$ gives $H=G-S=0$, and then $SH+H^TS=0$ and
$\mathrm{trace}(H)=0$ hold. Thus \eqref{SOHHDlinalg1} is satisfied.\\[3pt]
\textbf{2)} $a+d=0$ (i.e.\ $\mathrm{trace}(G)=0$).
Choosing $S:=0$ gives $H=G$. Then $\mathrm{trace}(H)=\mathrm{trace}(G)=0$, and $SH+H^TS=0$ is trivial since $S=0$. Thus \eqref{SOHHDlinalg1} holds.\\[3pt]
\textbf{3)} $b\neq c$ and $a+d\neq0$. In this case set
\[
t:=a+d\neq0,\qquad u:=b-c\neq0.
\]
The trace condition \eqref{eqq4} gives
\[
s_{22}=a+d-s_{11}=t-s_{11}.
\]
We simplify \eqref{eqq3} using $s_{22}=t-s_{11}$:
\begin{align*}
0
&=s_{11}(b-s_{12})+(t-s_{11})(c-s_{12})
= s_{11}(b-c)+t(c-s_{12}).
\end{align*}
Since $t\neq0$ and $u=b-c\neq0$, this is equivalent to
\begin{equation}\label{eq:linear_rel}
t(c-s_{12})=-u\,s_{11}
\qquad\Longleftrightarrow\qquad
t s_{12}=t c+u s_{11}.
\end{equation}
Next, subtract \eqref{eqq1} from \eqref{eqq2}. Using $s_{22}=t-s_{11}$ and $d-s_{22}=s_{11}-a$ from \eqref{eqq4}, we obtain
\begin{align*}
0
&=\big(s_{12}(b-s_{12})+s_{22}(d-s_{22})\big)
 -\big(s_{11}(a-s_{11})+s_{12}(c-s_{12})\big)\\
&= s_{12}(b-c) + s_{22}(d-s_{22}) - s_{11}(a-s_{11})\\
&= u s_{12} + (t-s_{11})(s_{11}-a)-s_{11}(a-s_{11})
= u s_{12} + t(s_{11}-a).
\end{align*}
Therefore,
\begin{equation}\label{eqlinearrel2}
t s_{11} + u s_{12}=a t.
\end{equation}
Combining \eqref{eq:linear_rel} and \eqref{eqlinearrel2}, we obtain a $2\times2$ linear system for
$(s_{11},s_{12})$:
\[
\begin{pmatrix}
t & u\\
u & -t
\end{pmatrix}
\binom{s_{11}}{s_{12}}
=
\binom{a t}{-c t}.
\]
Since $\det\begin{pmatrix}t&u\\ u&-t\end{pmatrix}=-(t^2+u^2)\neq0$, this system has a unique solution.
Solving it gives
\[
s_{11}=\frac{t(at-cu)}{t^2+u^2},
\qquad
s_{12}=\frac{t(ct+au)}{t^2+u^2}.
\]
Since $t=a+d$ and $u=b-c$, we conclude
\begin{equation}\label{SOHHDlin7}
\begin{cases}
\ s_{11}= \frac{a+d}{(a+d)^2+(b-c)^2}\big (a(a+d)-c(b-c)\big ), \\[6pt]
\  s_{12}=\frac{a+d}{(a+d)^2+(b-c)^2}\big (c(a+d)+a(b-c)\big ), \\[6pt]
\ s_{22}=a+d-s_{11}.
\end{cases}
\end{equation}
\begin{theorem} \label{expliformuouoper}
Let
\[
G=\begin{pmatrix}a&b\\ c&d\end{pmatrix},\qquad
A=\begin{pmatrix}\alpha&\beta\\ \beta&\gamma\end{pmatrix}
\quad(\alpha>0,\ \delta:=\det A=\alpha\gamma-\beta^{2}>0).
\]
Then, there exists a symmetric matrix $S=\begin{pmatrix}s_{11}& s_{12}\\ s_{12}& s_{22}\end{pmatrix}$ satisfying \eqref{SOHHDlinalg1}, and given as follows:
\begin{itemize}
\item[(i)]
If $\gamma b -\alpha c + \beta(a-d) =0$, then $S = A^{-1}G$, and in particular,
\[
s_{11} = \frac{\gamma a - \beta c}{\alpha\gamma - \beta^2}, \qquad
s_{12} = \frac{\gamma b - \beta d}{\alpha\gamma - \beta^2}, \qquad
s_{22} = \frac{\alpha d - \beta b}{\alpha\gamma - \beta^2}.
\]

\item[(ii)]
If $a+d=0$, then $S=0$, that is,
\[
s_{11} = 0, \qquad s_{12} = 0, \qquad s_{22} = 0.
\]

\item[(iii)] If $a+d \neq 0$ and $\gamma b- \alpha c + \beta(a-d) \neq 0$, then
\begin{align*}
s_{11} &= \frac{a+d}{ (\alpha \gamma-\beta^2) (a+d)^2 +\big( \gamma b - \alpha c + \beta(a-d)\big)^2} \Big( \gamma a(a+d) + \alpha c^2 - \gamma bc - 2\beta ac \Big), \\[8pt]
s_{12} &= \frac{a+d}{ (\alpha \gamma-\beta^2) (a+d)^2 +\big( \gamma b - \alpha c + \beta(a-d)\big)^2} \Big( \alpha cd + \gamma ab - 2\beta ad \Big), \\[8pt]
s_{22} &= \frac{a+d}{ (\alpha \gamma-\beta^2) (a+d)^2 +\big(\gamma b - \alpha c + \beta(a-d)\big)^2} \Big(\alpha d(a+d) + \gamma b^2 - \alpha bc - 2\beta bd \Big).
\end{align*}
\end{itemize}
\end{theorem}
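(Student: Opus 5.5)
Since $d=2$, the Riccati condition \eqref{SOHHDlinalg1} is the symmetric $2\times 2$ matrix equation $SG+G^TS=2SAS$ (three scalar equations) together with the scalar trace condition $\mathrm{trace}(AS)=a+d$. I would therefore check each of the three cases by direct substitution, and for the nontrivial case (iii) reduce to the already established case $A=id$, namely formula \eqref{SOHHDlin7}, via Claim 1 in the proof of Theorem \ref{sol:algRicgen}.

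Case (ii) is immediate: $S=0$ gives $0=0$, and $\mathrm{trace}(G)=a+d=0$. For case (i), I would first observe that $\gamma b-\alpha c+\beta(a-d)=0$ is precisely the condition that $A^{-1}G$ is symmetric. Indeed, $A^{-1}=\delta^{-1}\begin{pmatrix}\gamma&-\beta\\-\beta&\alpha\end{pmatrix}$, so the off-diagonal entries of $\delta A^{-1}G$ are $\gamma b-\beta d$ and $\alpha c-\beta a$, and their difference is the given expression. With $S:=A^{-1}G$ symmetric we have $AS=G$, hence $H=G-AS=0$. Then $SG+G^TS=2SAS$ reads $2S AS=2SAS$ (using $G=AS$ and $G^T=S^TA=SA$), and $\mathrm{trace}(G-AS)=0$ holds trivially. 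The stated entries of $S$ are simply those of $A^{-1}G$.

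For case (iii), I would use the transformation from Claim 1. Set $\widehat G=A^{-1/2}GA^{1/2}$, take $\widehat S$ from \eqref{SOHHDlin7} applied to $\widehat G=\begin{pmatrix}\hat a&\hat b\\ \hat c&\hat d\end{pmatrix}$, and put $S=A^{-1/2}\widehat SA^{-1/2}$. Then $S$ solves \eqref{SOHHDlinalg1}. The work lies in expressing the result in terms of $a,b,c,d,\alpha,\beta,\gamma$ without square roots. The key invariants are as follows:
\begin{itemize}
\item $\hat a+\hat d=a+d$, by invariance of the trace under similarity.
\item $(\hat b-\hat c)^2=\delta^{-1}\big(\gamma b-\alpha c+\beta(a-d)\big)^2$. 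To see this, note that $\hat b-\hat c$ is the antisymmetric part of $\widehat G$. Writing $J=\begin{pmatrix}0&1\\-1&0\end{pmatrix}$, we have $\hat b-\hat c=\mathrm{trace}(J^T\widehat G)$, and the identity $MJM^T=\det(M)J$ for $2\times2$ matrices converts this into $\delta^{-1/2}\mathrm{trace}(J^TA^{-1}\cdot\delta\, A^{-1}G\ldots)$. I would verify this by computing $\mathrm{trace}\big(J A G\big)$-type expressions directly.
\end{itemize}
These invariants turn the denominator $(\hat a+\hat d)^2+(\hat b-\hat c)^2$ into $\delta^{-1}$ times the stated denominator. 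Note that this denominator is nonzero in case (iii), since $a+d\ne0$ and $\delta>0$; this also makes it clear why the case split is needed.

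A cleaner alternative to the conjugation, which I would try first, is to avoid $\sqrt A$ altogether. Since the candidate $S$ has an explicit rational form, I would directly verify the two conditions $\mathrm{trace}(AS)=a+d$ and $SG+G^TS=2SAS$. The trace condition reduces, after multiplying out, to the identity
\[
\alpha X+2\beta Y+\gamma Z=\delta(a+d)^2+\big(\gamma b-\alpha c+\beta(a-d)\big)^2,
\]
where $X,Y,Z$ are the bracketed expressions in $s_{11},s_{12},s_{22}$. This is a polynomial identity that can be checked by expansion. For the Riccati equation, it is convenient to use $H=G-AS$: the Riccati equation is $SH+H^TS=0$, i.e. the symmetric part of $SH$ vanishes. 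In $d=2$, with $\mathrm{trace}(H)=0$ already known, this amounts to the three scalar equations \eqref{SOHHDlin5}. I would verify them by polynomial expansion, using $\kappa$ as a common factor so that each equation becomes a quadratic identity in the bracketed entries divided by the denominator. Cross-checking against Theorem \ref{allGOHHD}(iii), whose $S$ has exactly the same form, shows the formula is consistent; indeed case (iii) here can simply cite Theorem \ref{allGOHHD}(iii) when $\det G\ne0$.

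The main obstacle is the singular subcase $\det G=0$ within case (iii), together with the purely algebraic verification of $SH+H^TS=0$. The former is not covered by the Lyapunov route. Here I expect the explicit polynomial identity check (or the $\sqrt A$ reduction to \eqref{SOHHDlin7}, which has no determinant restriction) to be the step that requires real effort.
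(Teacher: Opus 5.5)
Your proposal is correct. Your first route for (iii) is essentially the paper's: reduce to $A=id$ by a congruence, apply Suda's formula \eqref{SOHHDlin7} to $\widehat G$, and transform back. The denominator comes from exactly your two invariants.

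Your derivation of the second invariant is garbled. Cleanly: for any $M$ with $MM^T=A$ one has $M^{-1}=J^TM^TJ/\det M$, hence
$\hat b-\hat c=\mathrm{trace}(MJ^TM^{-1}G)=-\mathrm{trace}(AJG)/\det M=(\gamma b-\alpha c+\beta(a-d))/\sqrt{\delta}$.

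The main difference is that the paper uses the Cholesky factor $L$ rather than $\sqrt A$. Since $L^{-1}$ has entries $1/\sqrt\alpha$, $-\beta/\sqrt{\alpha\delta}$, $\sqrt{\alpha/\delta}$, both $\widehat G$ and $S=L^{-T}\widehat S L^{-1}$ are explicit and the radicals visibly cancel. By contrast, $\sqrt A=(A+\sqrt\delta\, I)/\sqrt{\alpha+\gamma+2\sqrt\delta}$ makes the back-substitution considerably messier. That back-substitution is the part you left open, and it is the bulk of the paper's proof.

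The paper obtains $s_{22}$ and $s_{12}$ this way, and then $s_{11}$ from the trace condition. That last step is precisely your identity $\alpha X+2\beta Y+\gamma Z=\delta(a+d)^2+(\gamma b-\alpha c+\beta(a-d))^2$. Two further remarks on this route:
\begin{itemize}
\item $\det G=0$ is no obstacle here, since \eqref{SOHHDlin7} only requires $\hat b\neq\hat c$ and $\hat a+\hat d\neq 0$.
\item For (i), the paper's choice $\widehat S=\widehat G$ is just your $S=A^{-1}G$ written in transformed coordinates.
\end{itemize}

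Your preferred route, verifying the closed form directly, is genuinely different. It is self-contained, independent of Suda's result and of any factorization of $A$, and uniform in $\det G$. The price is a brute-force identity that does not explain where the formula comes from.

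That identity can be shortened with what you already have. Write $S=\kappa M$ with $M$ the bracket matrix and $K$ the denominator. Cayley--Hamilton for $AM$, together with $\mathrm{trace}(AM)=K$, gives $MAM=KM-\det(M)\,\mathrm{adj}(A)$. Using also $\mathrm{adj}(G)=(a+d)I-G$, the Riccati equation becomes
$K\big(M\,\mathrm{adj}(G)+\mathrm{adj}(G)^TM\big)=2(a+d)\det(M)\,\mathrm{adj}(A)$.
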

\begin{proof}
\textbf{Step 1 (A transformation of \eqref{SOHHDlinalg1} via Cholesky decomposition for $A$)}\\
Using the Cholesky decomposition, consider the lower triangular matrix
\begin{equation} \label{expressilma}
L:=
\begin{pmatrix}
\sqrt{\alpha}&0\\[2pt]
\dfrac{\beta}{\sqrt{\alpha}}&\dfrac{\sqrt{\delta}}{\sqrt{\alpha}}
\end{pmatrix},
\end{equation}
satisfying $A=LL^{T}$. Note that its inverse is given by
\begin{equation} \label{expressilmainv}
L^{-1}=
\begin{pmatrix}
\dfrac{1}{\sqrt{\alpha}}&0\\[8pt]
-\dfrac{\beta}{\sqrt{\alpha}\sqrt{\delta}}&\dfrac{\sqrt{\alpha}}{\sqrt{\delta}}
\end{pmatrix}.
\end{equation}
Define the transformed matrix
\begin{equation} \label{hatgcalcu}
\widehat{G}:=L^{-1}GL.
\end{equation}
\begin{lemma}\label{recover S hat}
There exists a symmetric matrix $S$ solving \eqref{SOHHDlinalg1}, if and only if there exists a symmetric matrix $\widehat S$ solving
\begin{equation}\label{eqmainhat}
\widehat S\,\widehat G+\widehat G^{T}\widehat S = 2\widehat S^{2},
\qquad
\mathrm{trace}(\widehat S)=\mathrm{trace}(\widehat G).
\end{equation}
\end{lemma}
\begin{proof}
Assume there exists a symmetric matrix $S$ satisfying \eqref{SOHHDlinalg1}. Let
\[
\widehat{S}:=L^{T}SL.
\]
Then $\widehat S$ is symmetric. Moreover, using the representations for $\widehat S$, $\widehat G$, and $A$, we get
\begin{equation} \label{isomorphieq1}
\widehat S\,\widehat G+\widehat G^{T}\widehat{S}
= L^{T}(SG+G^{T}S)L, \quad
2\widehat S^{2}=2(L^{T}SL)(L^{T}SL)=2L^{T}(SAS)L,
\end{equation}
and
\begin{align} 
&\mathrm{trace}(\widehat G)=\mathrm{trace}(L^{-1}GL)=\mathrm{trace}(G), \nonumber \\
&\mathrm{trace}(\widehat{S})=\mathrm{trace}(L^{T}SL)=\mathrm{trace}(SLL^{T})  =\mathrm{trace}(SA)=\mathrm{trace}(AS). \label{isomorphieq2}
\end{align}
Thus $\widehat S$ satisfies \eqref{eqmainhat}. Conversely, suppose there exists a symmetric matrix $\widehat S$ such that \eqref{eqmainhat} holds. Setting
$$
S:=(L^{T})^{-1}\widehat{S} L^{-1},
$$
by \eqref{isomorphieq1} and \eqref{isomorphieq2}, we recover \eqref{SOHHDlinalg1}.
\end{proof}\\
\textbf{Step 2 (Computation of $\widehat{G}$).}
A direct computation, using \eqref{expressilma}, \eqref{expressilmainv}, \eqref{hatgcalcu}, yields
\begin{equation} \label{informathatab}
\widehat G:=
\begin{pmatrix}\widehat a&\widehat b\\ \widehat c&\widehat d\end{pmatrix}
=
\begin{pmatrix}
a+\dfrac{b\beta}{\alpha} & \dfrac{\sqrt{\delta} b}{\alpha}\\[10pt]
\dfrac{\alpha^{2}c+\alpha\beta(d-a)-\beta^{2}b}{\alpha\sqrt{\delta}} & d-\dfrac{\beta b}{\alpha}
\end{pmatrix},
\qquad
\mathrm{trace}(\widehat G)=a+d.
\end{equation}
\medskip
\noindent\textbf{Step 3 (Applying Suda's formula for $\widehat S$ (see \cite[Proposition 1]{Sud2})).}
Let
\[
\widehat S=\begin{pmatrix}x&y\\ y&z\end{pmatrix}.
\]
Consider the following cases:\\[3pt]
\emph{(i) If $\widehat b=\widehat c$} (i.e., $\gamma b -\alpha c + \beta(a-d) = 0$), then $\widehat G$ is symmetric. In this case, we may choose
\[
\widehat S=\widehat G,
\]
which implies
$$
S=(L^{-1})^T L^{-1} G = (L L^T)^{-1} G =A^{-1}G= \frac{1}{\alpha\gamma - \beta^2}
\begin{pmatrix}
\gamma a - \beta c & \gamma b - \beta d \\
\gamma b - \beta d & \alpha d - \beta b
\end{pmatrix}.
$$

\noindent\emph{(ii) If $\widehat a+\widehat d=0$} (i.e., $a+d=0$), then
\[
\widehat S=0,
\]
and hence $S=0$.

\noindent\emph{(iii) If $\widehat{b} \neq \widehat{c}$ and $\widehat a+\widehat d \neq 0$} (i.e., $\gamma b - \alpha c + \beta(a-d) \neq 0$ and $a+d \neq 0$), then using the formula \eqref{SOHHDlin7} (derived in \cite[Proposition 1]{Sud2}) the unique symmetric solution of \eqref{eqmainhat} is given by
\begin{equation}\label{eqhatsuda}
\begin{cases}
\displaystyle
x=\frac{\widehat a+\widehat d}{(\widehat a+\widehat d)^{2}+(\widehat b-\widehat c)^{2}}\Bigl(\widehat a\,(\widehat a+\widehat d)-\widehat c\,(\widehat b-\widehat c)\Bigr),\\[10pt]
\displaystyle
y=\frac{\widehat a+\widehat d}{(\widehat a+\widehat d)^{2}+(\widehat b-\widehat c)^{2}}\Bigl(\widehat c\,(\widehat a+\widehat d)+\widehat a\,(\widehat b-\widehat c)\Bigr),\\[10pt]
\displaystyle
z=\widehat a+\widehat d-x.
\end{cases}
\end{equation}
In particular, this $\widehat S$ satisfies \eqref{eqmainhat}.

\medskip
\noindent\textbf{Step 4 (Recovering $S$ from $\widehat{S}$).}
The solution $S$ of \eqref{SOHHDlinalg1} is obtained via $\widehat{S}$ of Step 3(iii) through the formula derived at the end of the proof of Lemma \ref{recover S hat}
\[
S=L^{-T}\widehat S L^{-1}=(L^{-1})^{T}\widehat S L^{-1}.
\]
We have
\[
L^{-1}=
\begin{pmatrix}
p&0\\ q&r
\end{pmatrix}
\quad\text{with}\quad
p=\frac{1}{\sqrt{\alpha}},\qquad
q=-\frac{\beta}{\sqrt{\alpha}\sqrt{\delta}},\qquad
r=\frac{\sqrt{\alpha}}{\sqrt{\delta}}.
\]
A straightforward multiplication gives the entries of $S$ in terms of $(x,y,z)$:
\begin{equation}\label{eqfromhat}
\begin{aligned}
s_{11}&=p^{2}x+2pq\,y+q^{2}z
=\frac{x}{\alpha}-\frac{2\beta}{\alpha\sqrt{\delta}}\,y+\frac{\beta^{2}}{\alpha\delta}\,z,\\[6pt]
s_{12}&=r(py+qz)
=\frac{y}{\sqrt{\delta}}-\frac{\beta}{\delta}\,z,\\[6pt]
s_{22}&=r^{2}z=\frac{\alpha}{\delta}\,z,
\end{aligned}
\end{equation}
where $x,y,z$ are defined as in \eqref{eqhatsuda}.
Then, as in the argument of {\bf Step 1},
$S$ fulfills \eqref{SOHHDlinalg1}.\\ \\
\noindent\textbf{Step 5 (Explicit expression for $x,y,z$).}
First, observe that
\begin{align*}
\widehat{b} - \widehat{c} &= \frac{b\sqrt{\delta}}{\alpha} - \frac{\alpha^2 c + \alpha\beta(d-a) - \beta^2 b}{\alpha\sqrt{\delta}} = \frac{b\delta - \alpha^2 c - \alpha\beta(d-a) + \beta^2 b}{\alpha\sqrt{\delta}} \\[10pt]
&= \frac{b(\alpha\gamma - \beta^2) - \alpha^2 c - \alpha\beta d + \alpha\beta a + \beta^2 b}{\alpha\sqrt{\delta}} = \frac{\gamma b - \alpha c - \beta d + \beta a}{\sqrt{\delta}} \\[10pt]
&= \frac{\gamma b - \alpha c + \beta(a-d)}{\sqrt{\delta}}.
\end{align*}
Let
\begin{equation} \label{definitiop}
P:=\gamma b - \alpha c + \beta(a-d). 
\end{equation}
Then, 
$$
\widehat{b}-\widehat{c}=\frac{P}{\sqrt{\delta}}.
$$
Now, note that
\begin{align*}
(\widehat{a}+\widehat{d})^2 + (\widehat{b}-\widehat{c})^2 &= (a+d)^2 + \frac{P^2}{\delta} = \frac{\delta(a+d)^2 + P^2}{\delta}.
\end{align*}
Let
\begin{equation} \label{definitionk}
K: = \delta(a+d)^2 + P^2.
\end{equation}
Then,
$$
(\widehat{a}+\widehat{d})^2 + (\widehat{b}-\widehat{c})^2 =\frac{K}{\delta}.
$$
Using the formula for $x$ in \eqref{eqhatsuda} and substituting \eqref{informathatab}, we get
\begin{align*} 
x &= \frac{\delta(a+d)}{K} \left[ \left( \frac{\alpha a + b\beta}{\alpha} \right)(a+d) - \frac{(\alpha^{2}c+\alpha\beta(d-a)-\beta^{2}b)}{\alpha\sqrt{\delta}} \cdot \left( \frac{P}{\sqrt{\delta}} \right) \right] \nonumber \\
&= \frac{(a+d)}{\alpha K} \left[ \delta(\alpha a + b\beta)(a+d) -C P \right],  
\end{align*}
where we define
$$
C:=\alpha^2 c + \alpha\beta(d-a) - \beta^2 b.
$$
Similarly, for $y$, using \eqref{eqhatsuda} and \eqref{informathatab},
\begin{align}
y &= \frac{\delta(a+d)}{K} \left[ \frac{(\alpha^{2}c+\alpha\beta(d-a)-\beta^{2}b)}{\alpha\sqrt{\delta}}(a+d) + \left( \frac{\alpha a + b\beta}{\alpha} \right) \left( \frac{P}{\sqrt{\delta}} \right) \right] \nonumber \\
&=\frac{\sqrt{\delta}(a+d)}{\alpha K} \bigg[ C(a+d) + (\alpha a + b\beta)P \bigg]. \label{explicitformuy}
\end{align}
Let $\tau = a+d$. Then $z = \tau - x$.
From the formula for $z$ in \eqref{eqhatsuda} and \eqref{informathatab},
\begin{align} \label{formulaforz}
z = \tau - \frac{\tau}{\alpha K} \Bigl[ \delta(\alpha a + b\beta)\tau - CP \Bigr] = \frac{\tau}{\alpha K} \bigg[ \underbrace{\alpha K - \Bigl( \delta(\alpha a + b\beta)\tau - CP \Bigr)}_{=: N} \bigg].
\end{align}
Substituting $K = \delta\tau^2 + P^2$ into the expression for $N$, we obtain
\begin{align*}
N &= \alpha(\delta\tau^2 + P^2) - \delta(\alpha a + b\beta)\tau + CP = \alpha\delta\tau^2 + \alpha P^2 - \delta(\alpha a + b\beta)\tau + CP \\[10pt]
&= \delta\tau \big ( \alpha\tau - (\alpha a + b\beta) \big ) + P \big ( \alpha P + C \big ).
\end{align*}
Observe that
\[
\alpha\tau - \alpha a - b\beta = \alpha(a+d) - \alpha a - b\beta = \alpha a + \alpha d - \alpha a - b\beta = \alpha d - b\beta,
\]
and that
\begin{align*}
\alpha P + C &= \alpha(b\gamma - c\alpha + \beta a - \beta d) + (\alpha^2 c + \alpha\beta d - \alpha\beta a - \beta^2 b) \\
&= \alpha b\gamma - \alpha^2 c + \alpha\beta a - \alpha\beta d + \alpha^2 c + \alpha\beta d - \alpha\beta a - \beta^2 b \\
&=\alpha b\gamma - \beta^2 b = b(\alpha\gamma - \beta^2) =\delta b.
\end{align*}
Therefore,
\[
N = \delta\tau(\alpha d - b\beta) + Pb\delta = \delta \Big( \tau(\alpha d - b\beta) + bP \Big),
\]
where
\begin{align*}
\tau(\alpha d - b\beta) &= (a+d)(\alpha d - b\beta) = \alpha ad - \beta ab + \alpha d^2 - \beta  bd, \\
bP &= b(b\gamma - c\alpha + \beta a - \beta d) =\gamma b^2 -\alpha bc +  \beta ab - \beta b d.
\end{align*}
Thus,
\begin{align*}
N &= \delta \Big( \alpha ad + \alpha d^2 + b^2\gamma - bc\alpha - 2\beta bd \Big)\\
&= \delta \Big( \alpha d(a+d) + \gamma b^2 - \alpha bc - 2\beta bd \Big).
\end{align*}
It follows from \eqref{formulaforz} that 
\begin{align}
z&=\frac{\tau}{\alpha K} \cdot N=
 \frac{a+d}{\alpha K} \cdot \delta \Big( \alpha d(a+d) + \gamma b^2 - \alpha bc - 2\beta bd \Big) \nonumber \\
 &= \frac{(a+d)(\alpha\gamma - \beta^2)}{\alpha K} \Big( \alpha d(a+d) + \gamma b^2 - \alpha bc - 2\beta bd \Big). \label{explicforz}
\end{align}

\noindent\textbf{Step 6 (Explicit expression for $s_{22}, s_{12}, s_{11}$).} \\
Using the expression for $s_{22}$ in \eqref{eqfromhat} and \eqref{explicforz}, we have
\begin{align}
s_{22} &= \frac{\alpha}{\delta} \times z = \frac{\alpha}{\delta} \times \left[ \frac{\tau \delta}{\alpha K} \Big( \alpha d(a+d) + \gamma b^2 - \alpha bc - 2\beta bd \Big) \right] \nonumber \\
&= \frac{\tau}{K} \Big( \alpha d(a+d) + \gamma b^2 - \alpha bc - 2\beta bd \Big). \label{explicitformuz}
\end{align}
Next, we compute $s_{12}$. From \eqref{eqfromhat}, 
$$
s_{12} = \frac{y}{\sqrt{\delta}} - \frac{\beta}{\delta}z.
$$
First, by \eqref{explicitformuy},
\[
\frac{y}{\sqrt{\delta}} = \frac{1}{\sqrt{\delta}} \left( \frac{\tau\sqrt{\delta}}{\alpha K} \Big( C(a+d) + (\alpha a + b\beta)P \Big)\right) = \frac{\tau}{\alpha K} \Big(  \underbrace{C(a+d) + (\alpha a + b\beta)P}_{=:Y_{num}} \Big).
\]
And by \eqref{explicforz},
\[
\frac{\beta}{\delta}z = \frac{\beta}{\delta} \left( \frac{\tau\delta}{\alpha K} \Big( \alpha d(a+d) + \gamma b^2 - \alpha bc - 2\beta bd\Big) \right) = \frac{\tau\beta}{\alpha K} \Big( \underbrace{\alpha d(a+d) + \gamma b^2 - \alpha bc - 2\beta bd}_{=:Z_{num}}\Big).
\]
Combining these, we factor out the common term $\frac{\tau}{\alpha K}$:
\[
s_{12} = \frac{\tau}{\alpha K} \Big( Y_{num} - \beta Z_{num} \Big).
\]
We now calculate the numerators within the brackets:
\begin{align*}
Y_{num} &= (\alpha^2 c + \alpha\beta d - \alpha\beta a - \beta^2 b)\tau + (\alpha a + b\beta)P, \\
\beta Z_{num} &= \beta \Big( \alpha d \tau +b\big( \gamma b-\alpha c  - 2 \beta d\big) \Big)\\
&= \beta \Big( \alpha d \tau + b\big( \gamma b -\alpha c  + \beta a - \beta d	\big) - \beta b d - \beta b a \Big)\\
&= \beta \Big( (\alpha d -\beta b) \tau +bP \Big) = (\alpha\beta d -\beta^2 b)\tau + \beta b P.
\end{align*}
Subtracting these terms yields:
\begin{align*}
 Y_{num} - \beta Z_{num} &= \tau \Bigl( \alpha^2 c + \alpha\beta d - \alpha\beta a - \beta^2 b - \alpha\beta d + \beta^2 b \Bigr)  + P \Bigl( \alpha a + b\beta -\beta b \Bigr) \\
&= \tau (\alpha^2 c - \alpha\beta a) + P(\alpha a)=\alpha \Bigl[ (\alpha c - \beta a)\tau + a P \Bigr].
\end{align*}
Substituting this back into the expression for $s_{12}$, we obtain
\begin{align}
s_{12} & = \frac{\tau}{\alpha K} \cdot \alpha \Big[ (\alpha c - \beta a)(a+d) + a(b\gamma - c\alpha + \beta a - \beta d)  \Big] \nonumber \\
&= \frac{\tau}{K} \Big(\alpha a c+ \alpha c d - \beta a^2 - \beta a d + \gamma a b - \alpha a c + \beta a^2- \beta a d \Big) \nonumber \\
&= \frac{\tau}{K} \Big( \alpha c d + \gamma a b  - 2 \beta a d \Big). \label{explicitformfi}
\end{align}
Thus, we obtain the final explicit form for $s_{12}$:
\[
s_{12} = \frac{\tau}{K} \Bigl[ \alpha c d + \gamma a b - 2 \beta a d \Bigr].
\]
Finally, we determine $s_{11}$ using the trace constraint. Recall that $\mathrm{trace}(AS) = \mathrm{trace}(G) = \tau$.
Given $A = \begin{pmatrix} \alpha & \beta \\ \beta & \gamma \end{pmatrix}$ and $S = \begin{pmatrix} s_{11} & s_{12} \\ s_{12} & s_{22} \end{pmatrix}$, the trace condition becomes
\[
\mathrm{trace}(AS) = \alpha s_{11} + 2\beta s_{12} + \gamma s_{22} = \tau.
\]
Rearranging this to find explicitly $s_{11}$ and using \eqref{explicitformfi} and \eqref{explicitformuz}, we get
\begin{align}
s_{11} &= \frac{1}{\alpha} \Bigl( \tau - 2\beta s_{12} - \gamma s_{22} \Bigr)  \nonumber \\
&=\frac{\tau}{\alpha K} \bigg( K - 2\beta \Big( \alpha cd + \gamma ab - 2\beta ad \Big) - \gamma \Big( \alpha d \tau + \gamma b^2 - \alpha bc - 2\beta bd \Big) \bigg). \label{explicicalsone}
\end{align}
Observe from \eqref{definitionk} and \eqref{definitiop} that 
$$
K = (\alpha\gamma - \beta^2)\tau^2 + \big(\gamma b + \beta(a-d) - \alpha c\big)^2.
$$
We decompose $K$ into terms independent of $\alpha$ (denoted by $K_{\text{no-$\alpha$}}$) and terms dependent on $\alpha$ (denoted by $K_{\text{$\alpha$}}$). We calculate
\begin{align*}
K_{\text{no-$\alpha$}}&=-\beta^2 (a+d)^2 + \big(\gamma b+\beta a- \beta d \big)^2 \\
&= (-\beta^2 a^2 - 2 \beta^2 ad - \beta^2 d^2) + ( \gamma^2 b^2 + \beta^2 a^2 + \beta^2 d^2 +2\beta \gamma ab - 2 \beta \gamma bd - 2 \beta^2 ad) \\
& = -4 \beta^2 ad + \gamma^2 b^2  + 2 \beta \gamma ab - 2 \beta \gamma bd,
\end{align*}
and $K_{\alpha} =  \alpha \gamma (a+d)^2 - 2 \alpha c \big(\gamma b + \beta(a-d) \big) + \alpha^2 c^2$.
Consider the subtraction terms in the numerator of \eqref{explicicalsone}:
\begin{align*}
&2\beta \Big( \alpha cd + \gamma ab - 2\beta ad \Big) + \gamma \Big( \alpha d \tau + \gamma b^2 - \alpha bc - 2\beta bd \Big) \\
&=\underbrace{2 \beta \big(  \gamma ab - 2 \beta ad \big) + \gamma (\gamma b^2 - 2 \beta bd)}_{T_{\text{no-}\alpha}} \;+\; \underbrace{ 2 \alpha \beta cd  +\gamma \big( \alpha d (a+d) - \alpha bc\big)}_{T_{\alpha}}.
\end{align*}
The non-$\alpha$ terms cancel:
\begin{align*}
K_{\text{no-$\alpha$}}-T_{\text{no-}\alpha} =  \Big(-4 \beta^2 ad + \gamma^2 b^2  + 2 \beta \gamma a b - 2 \beta \gamma bd \Big)- \Big(	 
2 \beta \gamma ab -4 \beta^2 a d +\gamma^2 b^2 - 2 \beta \gamma bd  \Big)=0.
\end{align*}
For the terms containing $\alpha$:
\begin{align*}
K_{\alpha}-T_{\alpha} &=  \alpha \gamma (a+d)^2 - 2 \alpha c \big( \gamma b + \beta a- \beta d \big) + \alpha^2 c^2 - 2 \alpha \beta cd  -\alpha \gamma d (a+d) + \alpha \gamma bc \\
&= \alpha \bigg( \gamma a (a+d)  +	\alpha c^2 - \gamma bc	- 2 \beta ac \bigg).
\end{align*}
Consequently, it follows from \eqref{explicicalsone} that
$$
s_{11} = \frac{\tau}{\alpha K} \big( K_{\alpha}-T_{\alpha} \big) = \frac{\tau}{K} \Big( \gamma a (a+d)  +	\alpha c^2 - \gamma bc	- 2 \beta ac  \Big),
$$
as desired.
\end{proof}

\begin{example}\label{exam:nonunique}
Examples where a SOHHD for a linear vector field is nonunique are given where the conditions Theorem  \ref{expliformuouoper} overlap. For instance, (i) and (ii) of Theorem \ref{expliformuouoper} overlap, when
\[
\gamma b -\alpha c + \beta(a-d) =0 \quad \text{and} \quad a+d=0.
\]
In this case $S=0$ and 
\[
S= \frac{1}{\alpha\gamma - \beta^2}
\begin{pmatrix}
\gamma a - \beta c & \gamma b - \beta d \\
\gamma b - \beta d & \alpha d - \beta b
\end{pmatrix}= \frac{1}{\alpha\gamma - \beta^2}
\begin{pmatrix}
\gamma a - \beta c & \alpha c -\beta a  \\
\alpha c -\beta a & -\alpha a - \beta b
\end{pmatrix}.
\]
are solutions.
\end{example}

\section*{Acknowledgments}
The research of Haesung Lee was supported by the National Research Foundation of Korea (NRF) grant funded by the Korea government (MSIT) (RS-2025-16070171) and by the Regional Innovation System \& Education (RISE)-Regional Growth Innovation LAB program through the Gyeongbuk RISE Center, funded by the Ministry of Education (MOE) and Gyeongsangbuk-do, Republic of Korea (2026-rise-15-105).\\
The research of Gerald Trutnau was supported by the National Research Foundation of Korea (NRF) grant funded by the Korea government (MSIT) (RS-2025-16070171).

\newpage

\noindent
Haesung Lee\\
Department of Mathematics and Big Data Science  \\
Kumoh National Institute of Technology \\
Gumi, Gyeongsangbuk-do 39177, South Korea \\
E-mail: fthslt@kumoh.ac.kr \\ \\ 
Gerald Trutnau\\
Department of Mathematical Sciences \\
and Research Institute of Mathematics\\
Seoul National University\\
1 Gwanak-ro, Gwanak-gu,
Seoul 08826, South Korea  \\
E-mail: trutnau@snu.ac.kr
\end{document}